\patchcmd{\thebibliography}{%
	\section*{\refname}\@mkboth{\MakeUppercase\refname}{\MakeUppercase\refname}}{%
	\section*{\refname}}{}{}
\newcounter{mainthm}
\newcounter{mainconj}
\newtheorem{thm}{Theorem}[section]
\newtheorem{observation}[thm]{\textit{\underline{Observation}}}
\theoremstyle{plain}
\newtheorem{lem}[thm]{Lemma}
\newtheorem{prop}[thm]{Proposition}
\newtheorem{cor}[thm]{Corollary}
\newtheorem{defn-thm}[thm]{Definition-Theorem}
\newtheorem{defn-lem}[thm]{Definition-Lemma}
\newtheorem{conjecture}
[thm]{Conjecture}
\newtheorem{defn}[thm]{Definition}
\newtheorem{convention}
[thm]{Convention}
\theoremstyle{definition}
\newtheoremstyle{rmk}
{5pt}
{5pt}
{}
{}
{\itshape}
{}
{.5em}
{}
\newtheorem{rmk}[thm]{Remark}
\newtheoremstyle{note}
{8pt}
{5pt}
{\itshape}
{10pt}
{\bfseries}
{}
{.5em}
{}
\theoremstyle{note}
\setlist[description]{font=
	\normalfont
	\itshape
	\space}
\bfseries\vspace{3pt}}%
\scriptsize \vspace{0pt}}%
\footnotesize \vspace{1pt}}%
\titleformat{\subsubsection}[runin]{
	\bfseries
	\itshape\normalsize}{\thesubsubsection \ }{0em}{}[\mbox{ . } ]
\newcommand{\mathds}[1]{\text{\usefont{U}{dsrom}{m}{n}#1}}
\newcommand{\one}{\mathds {1}}
\newcommand{\trop}{ {\mathfrak{trop}} }
\DeclareMathOperator{\val}{\mathsf{v}}
\begin{document}
	\setlength{\parindent}{15pt}	\setlength{\parskip}{0em}

	\title{Family Floer SYZ conjecture for $A_n$ singularity}
	\author{Hang Yuan}
	\date{}
	\begin{abstract} {\sc Abstract:}  
		We resolve a mathematically precise SYZ conjecture for $A_n$ singularity by building a quantum-corrected T-duality between two singular torus fibrations related to the K\"ahler geometry of the $A_n$-smoothing and the Berkovich geometry of the $A_n$-resolution, respectively.
		Our approach involves novel computations that embody a non-archimedean version of the partition of unity, and it confirms the strategy that patching verified local singularity models brings global SYZ conjecture solutions (like K3 surfaces) within reach. There is also explicit extra evidence concerning the collision of singular fibers and braid group actions.
		On one hand, we address the central challenge of matching SYZ singular loci identified by Joyce \cite{Joyce_Singularity}.
		In reality, we construct not merely an isolated SYZ mirror fibration partner, but a parameter-dependent one that always keeps the matching singular loci plus integral affine structure, even when the collision of singular fibers occurs.
		On the other hand, our SYZ result displays a visible tie, regardless of the parameter choice, between the $(A_n)$-configuration of Lagrangian spheres occurred as vanishing cycles in the $A_n$-smoothing and the exceptional locus of rational $(-2)$-curves in the $A_n$-resolution, which aligns with the celebrated works of Khovanov, Seidel, and Thomas \cite{Seidel_Khovanov2002quivers,Seidel_Thomas_2001braid,thomas2000mirror_braid,seidel1999lagrangian}.
	\end{abstract}
	\maketitle

		\tableofcontents

	\section{Introduction}

The 1996 Strominger-Yau-Zaslow (SYZ) conjecture \cite{SYZ} posits that for a pair of "mirror" Calabi-Yau manifolds, there exist "dual" special Lagrangian torus fibrations with congruent singular loci over a shared base.
A rigorous mathematical formulation of SYZ conjecture remains elusive, with the crucial step of a resolution being to accurately state the conjecture itself.

Recent major advancements in SYZ research address the existence of special Lagrangian fibrations, as studied by Y. Li's results \cite{Li2019syz,Li2020metric} and the tropicalization of SYZ picture within algebraic geometry, as explored by Gross-Hacking-Keel \cite{GHK15} and Gross-Siebert \cite{GS11,GS_canonical_wall}.
Meanwhile, the ultimate objective of mirror symmetry aims to foster a bilateral understanding and forge mathematically cogent bridges between two disparate geometric universes.
Kontsevich's homological mirror symmetry \cite{KonICM} closely aligns with this goal.
However, evidence supporting the dualistic aspects of the SYZ conjecture---especially regarding mirror fibration duality with quantum corrections and singular fibers, rather than just the identification of mirror spaces---appears to be relatively limited at present; some related evidence from differential geometry can be found in \cite{collins2020syz}.
As it is often believed that examples are to mathematics what experiments are to physics.
This paper aims to provide further exemplification to justify the correctness of the SYZ approach to mirror symmetry.

A key novelty here lies in properly addressing the SYZ fibration duality with singular fibers. Our
strategy should be also related to the classical ideas of Auroux, Chan, Lau, and Leung \cite{AuTDual,CLL12} and the recent works of Bardwell-Evans, Cheung, Hong, and Lin \cite{Cheung_Lin_some_example, bardwell2021scattering} for the scattering diagram pictures.




Mirror symmetry for the $A_n$ singularity is not a new concept, but the depth of understanding might vary. While there is a known computation matching for the HMS aspect (cf. \cite{Pomerleano2011curved_I,Chan_An_Tduality}), the geometric logic behind the mirror correspondence remains elusive. In particular, the mirror connection between the two braid group actions on both sides is not well-understood.
The work of Abouzaid-Auroux-Katzarkov \cite{AAK_blowup_toric} offers a T-duality view on this mirror space identification, but it lacks fibration duality, mirror singular fibers, or collision of singular points.
Mirror symmetry should also extend beyond just hyperk\"ahler rotation; the almost toric Lagrangian fibration on the $A_n$-resolution does not show compelling connections to the one on the $A_n$-smoothing, as neither the collision of singular points nor the braid group action is rightly discernible.

In this paper, we study a new version of the SYZ conjecture that demonstrates a duality between two torus fibrations on the $A_n$-smoothing and $A_n$-resolution within K\"ahler and Berkovich geometry respectively and shows convincing geometric phenomenon for the braid group action and the collision of singular points. An upgrade to categorical results will be addressed somewhere else.

%

\subsection{Main result}
For SYZ fibration duality, the main challenge lies in coherently addressing and explicitly representing the data of singular fibers and the affiliated quantum correction holomorphic disks. Let's briefly explain the story.
One would typically begin with a fibration $\pi: X\to B$, where the general fiber is a Lagrangian torus and the discriminant locus $\Delta\subset B$ with $B_0=B-\Delta$. The initial "dual" of $\pi_0=\pi|_{B_0}$ should be described as the dual torus fibration $f_0:  \mathscr Y_0 \cong R^1\pi_{0*}(U(1))\to B_0$. A true "dual" of $\pi$ may be a compactification or extension of $f_0$ to some $f$ as below (see Gross's introduction in \cite{Gross_topo_MS}):
\[
{
\xymatrix{
	X \ar[d]_{\pi} & X_0\ar[l] \ar[d]_{\pi_0}   \ar@{.}@/_1.15pc/[rr]_{\small \text{` T-duality '}}    & & \mathscr Y_0\ar[d]^{f_0} \ar[r]  & \mathscr Y \ar[d]^{f} \\
	B & B_0 \ar[l] \ar@{.}@/^1.15pc/[rr] & &  B_0 \ar[r] & B
}
}
\]

An appropriate dual fibration $f_0$ should preserve the integral affine structure on $B_0$ induced inherently by the Lagrangian torus fibration $\pi_0$. By the action-angle coordinates, $\pi_0$ is locally modeled on the logarithm map $\mathrm{Log}: (\mathbb C^*)^n\to \mathbb R^n$, which sends $z_j$ to $\log|z_j|$. Choosing an atlas $(U_i\to V_i)$ of the integral affine structure for a fine open covering $(U_i)$ of $B_0$ allows us to view $\pi_0:X_0\to B_0$ as an assembly of the local pieces $\mathrm{Log}^{-1}(V_i)\to V_i$.
Meanwhile, an analogous map, the tropicalization map, $\trop: (\Bbbk^*)^n \to \mathbb R^n$, exists in Berkovich geometry (e.g. \cite{EKL,NA_nonarchimedean_SYZ,KSAffine}). This sends $z_j$ in $\Bbbk$ to $-\log|z_j|_{\Bbbk}$, using the norm of a non-archimedean field $\Bbbk$.
As a \textit{toy model}, we simply claim that $((\Bbbk^*)^n, \mathfrak{trop})$ is SYZ mirror to $( (\mathbb C^*)^n,\mathrm{Log})$; then, we seek for a reasonable globalization of this local SYZ picture.


If a natural gluing of the local models $\trop^{-1}(V_i)$ were to take place, an affinoid torus fibration $f_0$ in Berkovich geometry would emerge, preserving the integral affine structure on $B_0$ automatically. Then, $f_0$ would be a plausible candidate for the SYZ dual fibration.
Further, we would like the gluing process to be systematic, exhibiting a certain level of inherent data, rather than being random.

As demonstrated in the author's thesis \cite{Yuan_I_FamilyFloer},
the quantum-correcting Maslov-0 holomorphic disks for $\pi_0$ within $X$ offer a unique canonical algorithm for gluing the local fibrations $\trop^{-1}(V_i)\to V_i$ in the category of non-archimedean analytic spaces, drawing inspiration from the pioneering ideas of Fukaya and Tu \cite{FuBerkeley,Tu,FuFamily}.
Let us call the resulting $f_0$ the \textit{canonical dual affinoid torus fibration} of $\pi_0$.
To achieve this, the Floer-theoretic basis requires the selection of the Novikov field $\Lambda=\mathbb C((T^{\mathbb R}))$ as the ground field, replacing the standard topological fibration $R^1\pi_{0*}(U(1))$ with $R^1\pi_{0*}(U_\Lambda)$ where $U_\Lambda$ is the unit circle in $\Lambda$. This is due to Gromov's compactness, a basic principle in symplectic geometry, ensures convergence solely over $\Lambda$, not $\mathbb C$. This morally validates the use of Berkovich geometry.



\begin{conjecture}
	\label{conjecture_our_SYZ}
	Given a Calabi-Yau manifold $X$,
	\begin{itemize}[topsep=1pt, itemsep=1pt, parsep=0pt]
		\item[(a)]  there exists a Lagrangian fibration $\pi:X\to B$ onto a topological manifold $B$ such that the $\pi$-fibers are graded Lagrangians with respect to a holomorphic volume form $\Omega$;
		\item[(b)]  there exists a Berkovich analytic space $\mathscr Y$ over the Novikov field $\Lambda=\mathbb C((T^{\mathbb R}))$ together with a tropically continuous fibration $f : \mathscr Y \to B$ onto the same base $B$;
	\end{itemize}
 
	satisfying the following properties
 
	\begin{itemize}[topsep=1pt, itemsep=0pt, parsep=2pt]
		\item[(i)]  $\pi$ and $f$ have the same singular locus skeleton $\Delta$ in $B$;
		\item[(ii)]  $\pi_0=\pi|_{B_0}$ and $f_0= f|_{B_0}$ induce the same integral affine structures on $B_0=B\setminus \Delta$;
		\item[(iii)] $f_0$ is isomorphic to the canonical dual affinoid torus fibration associated to $\pi_0$.
	\end{itemize}
\end{conjecture}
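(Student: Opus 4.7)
The plan is to verify Conjecture~\ref{conjecture_our_SYZ} in the $A_n$ case by exhibiting both sides explicitly. I would take $X$ to be the $A_n$-smoothing presented as the affine hypersurface $\{xy = p(z)\} \subset \mathbb{C}^3$, where $p$ is a degree $n+1$ polynomial with roots $a_0,\dots,a_n$ that are distinct (and ultimately allowed to vary), endowed with its natural hyperk\"ahler Calabi-Yau structure. A Gross-type Lagrangian torus fibration $\pi: X \to B$ is then built by combining the moment map of the Hamiltonian $S^1$-action $(x,y)\mapsto(e^{i\theta}x,e^{-i\theta}y)$ with a suitable real-valued map constructed from $z$, yielding a discriminant $\Delta \subset B$ of exactly $n+1$ points (one per critical value of $p$) whose preimages are pinched tori realizing the $A_n$-configuration of vanishing Lagrangian spheres.

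Over the smooth locus $B_0 = B\setminus\Delta$, I would extract the integral affine structure from the action-angle coordinates and then enumerate the Maslov-index-zero holomorphic disks bounded by the $\pi$-fibers. Each critical value of $p$ emits walls of holomorphic disks whose contributions are accessible by an explicit open Gromov-Witten calculation. Feeding this data into the canonical gluing algorithm of \cite{Yuan_I_FamilyFloer} produces an affinoid torus fibration $f_0 : \mathscr{Y}_0 \to B_0$ over the Novikov field $\Lambda = \mathbb{C}((T^\mathbb{R}))$; property~(iii) then holds by construction, and property~(ii) is automatic because the $\trop$ map and the logarithm map induce the same affine structure.

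The decisive step is to extend $f_0$ across $\Delta$ to a tropically continuous $f : \mathscr{Y} \to B$ and to identify $\mathscr{Y}$ geometrically. I would attempt to recognize $\mathscr{Y}$ as the Berkovich analytification of the $A_n$-resolution over $\Lambda$ (or of a suitable formal model), so that the chain of exceptional rational $(-2)$-curves appears as the skeletal preimage of $\Delta$, delivering property~(i). This identification must be made explicit enough to vary continuously with the parameters $(a_0,\dots,a_n)$, thereby accommodating the collision of singular fibers and rendering visible the braid group action on the vanishing cycles predicted by Seidel-Thomas.

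The main obstacle will be this singular extension itself, because the Maslov-zero disk contributions accumulate precisely where the affinoid charts must degenerate, and the Berkovich topology on $\mathscr{Y}$ must globally encode these accumulations in a way consistent with the analytified resolution. Carrying out the extension uniformly in $(a_0,\dots,a_n)$, especially when several $a_i$ coincide, will demand the non-archimedean partition-of-unity technique hinted at in the abstract, used to compare the tropical charts $\trop^{-1}(V_i)$ with charts on the analytified resolution. A secondary difficulty is to verify that the order-statistics combinatorics of the $a_i$ reproduces the correct braid monodromy and the desired matching between vanishing cycles and $(-2)$-curves, compatibly with the Abouzaid-Fukaya family Floer functor.
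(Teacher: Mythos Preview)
Your proposal is correct and takes essentially the same approach as the paper: you correctly identify the Gross-type Lagrangian fibration on the $A_n$-smoothing, invoke the family Floer construction of \cite{Yuan_I_FamilyFloer} to obtain $f_0$ (which gives (ii) and (iii) automatically), embed into the analytified $A_n$-resolution, and extend across $\Delta$ via an explicit tropically continuous formula built from order statistics that degenerate correctly under collision of the $|a_k|$. Two minor points of alignment: the paper uses the restriction of the standard K\"ahler form from $\mathbb C^3$ rather than a hyperk\"ahler metric, and $\mathscr Y$ is not the full analytification of $Y_\Sigma$ but the analytic open domain $\{|\prod_j x_j^j|<1\}$ inside $Y_\Sigma\setminus\{y=0\}$; neither affects your outline.
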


\begin{defn}
	\label{SYZ_mirror_defn}
If $(X,\pi)$ and $(\mathcal Y, f)$ satisfy Conjecture \ref{conjecture_our_SYZ}, then we declare $\mathscr Y$ as \textit{SYZ mirror} to $X$, or more precisely $(\mathscr Y, f)$ as \textit{SYZ mirror} to $(X,\pi)$. If $\mathscr Y$ embeds in the Berkovich analytification of an algebraic variety $Y$ with equal dimensions, then we also say $Y$ as \textit{SYZ mirror} to $X$.
\end{defn}

The conjecture is accurately stated and supported by multiple examples in \cite{Yuan_I_FamilyFloer,Yuan_conifold}.
While item (iii) may require specialized Floer-theoretic machinery, the two other items (i) (ii) are already non-trivial to establish, despite relying on traditional concepts known for over a decade.

%

\begin{thm}
	\label{Main_thm_oversimplified}
	$A_n$-resolution is SYZ mirror to $A_n$-smoothing.
\end{thm}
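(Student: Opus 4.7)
The plan is to realize all the data of Conjecture \ref{conjecture_our_SYZ} explicitly for $X = \{xy = \prod_{i=1}^{n+1}(z-b_i)\}$ the $A_n$-smoothing, with the candidate for $\mathscr Y$ being the Berkovich analytification of the standard $A_n$-resolution. On the symplectic side, I would build the Lagrangian torus fibration $\pi:X\to B\cong\mathbb R^3$ by coupling the Hamiltonian $S^1$-action $(x,y,z)\mapsto(e^{i\theta}x,e^{-i\theta}y,z)$, whose moment map is the first coordinate of $\pi$, with two real functions of $z$ providing the remaining two coordinates; the discriminant locus $\Delta$ then consists of $n+1$ marked points lying on a common one-dimensional stratum of $B$, whose positions track the roots $b_i$. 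Gradedness with respect to $\Omega=dz\wedge d\log x$ is checked directly. This generalizes the $n=0$ case of \cite{Yuan_local_SYZ} and a truncated version of \cite{Yuan_conifold} to arbitrary $n$.

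The heart of the argument is item (iii): constructing the canonical dual affinoid torus fibration $f_0:\mathscr Y_0\to B_0$ associated to $\pi_0=\pi|_{B_0}$ and then extending it to $f:\mathscr Y\to B$. I would (a) enumerate all Maslov-$0$ holomorphic disks bounded by a generic smooth fiber $\pi^{-1}(b)$ — each root $b_i$ should contribute a family of disks whose combinatorics I expect to package cleanly via order statistics of the real numbers derived from $(b_i)$ relative to $b$ — and then (b) feed this data into the family Floer gluing algorithm of \cite{Yuan_I_FamilyFloer}. Concretely, one chooses an adapted locally finite cover of $B_0$, realizes each piece as a tropicalization chart $\trop^{-1}(V_i)\to V_i$ over $\Lambda$, and computes the transition maps as formal series in the Novikov parameter weighted by the disk counts from (a). The advertised \emph{non-archimedean partition of unity} is precisely the mechanism that makes these transitions patch coherently despite the presence of $n+1$ mutually interacting sources of quantum correction.

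With $f_0$ in hand, I would propose an explicit closed-form global formula for $f$, directly inspired by the disk-count data, and verify that its total space is the Berkovich analytification of the $A_n$-resolution, with the chain of $n$ exceptional $(-2)$-curves appearing as the fiber over $\Delta$. Items (i) and (ii) of Conjecture \ref{conjecture_our_SYZ} then follow by inspection of the formula, while (iii) is automatic from the construction of the previous paragraph. Finally, I would verify the construction runs uniformly over the entire parameter space of tuples $(b_i)$, including the collision strata where several $b_i$ coincide; the expected by-product is a natural monodromy action by the $A_n$-braid group that pairs the Lagrangian vanishing cycles in $X$ with the $(-2)$-curves in $\mathscr Y$, as predicted by \cite{Seidel_Khovanov2002quivers,Seidel_Thomas_2001braid}.

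The main obstacle is step (b): while the general family Floer gluing machinery is available, organizing the $(n{+}1)$-fold transition cocycle into a manageable closed form is a genuine combinatorial hurdle, and naive bookkeeping is quickly overwhelmed as $n$ grows and fibers collide. My expectation, reinforced by the abstract's reference to order statistics, is that the cocycle depends only on the sorted values of the parameters so that the resulting formula for $f$ is automatically continuous across collision strata; if so, the global formula, the $A_n$-resolution identification, and the braid-equivariance all fall out together. A secondary subtlety is to ensure that the chosen extension of $f_0$ across $\Delta$ recovers exactly the resolution rather than some other birational model — this should be forced by the Berkovich topology dictated by the disk counts, as in the analogous step of \cite{Yuan_local_SYZ,Yuan_conifold}.
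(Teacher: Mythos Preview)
Your overall strategy aligns with the paper's approach, but there is a fundamental dimensional error at the outset that would derail the execution. The $A_n$-smoothing $X=\{uv=h(z)\}$ is a complex \emph{surface} (real dimension $4$), so any Lagrangian torus fibration has a $2$-dimensional base, not $B\cong\mathbb R^3$. The correct fibration is $\pi(u,v,z)=\big(\tfrac12(|u|^2-|v|^2),\,|z|\big)$ with base $B=\mathbb R\times\mathbb R_{>0}$: the moment map for the $S^1$-action gives one coordinate and the single real function $|z|$ gives the other. Using ``two real functions of $z$'' would collapse the fibers to circles, not $2$-tori. The discriminant $\Delta$ then consists of the $n+1$ focus-focus points $(0,|a_k|)$ on the line $s=0$ in this $2$-dimensional base, and the integral affine structure on $B_0$ is encoded by the reduced-K\"ahler-area function $\psi(s,r)$ together with $s$.

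A secondary correction: the exceptional $(-2)$-curves $\mathcal D_1,\dots,\mathcal D_n$ in the resolution do not sit over $\Delta$ itself but over the \emph{segments} $\Gamma_k=\{0\}\times[|a_{k-1}|,|a_k|]$ joining consecutive singular points; this is exactly the content of the paper's Observation~\ref{observation_A}, where $\pi(\mathcal L_k)=\Gamma_k=f(\mathcal D_k)$. Beyond these two points, your plan --- enumerate Maslov-$0$ disks, run the family Floer gluing of \cite{Yuan_I_FamilyFloer}, package the cocycle via order statistics, embed into the toric resolution, and verify the commuting square --- is precisely what the paper carries out. The paper's decisive device, which you anticipate but do not pin down, is the explicit topological embedding $j:B\hookrightarrow\mathbb R^{n+3}$ built from the order statistics of the sample $\{\psi(s,r),\psi(s,|a_0|),\dots,\psi(s,|a_n|)\}$; this is what ``unfolds'' the $n+1$ singularities and allows a single closed formula for $F$ in homogeneous coordinates on $Y_\Sigma$, after which items (i)--(iii) of Conjecture~\ref{conjecture_our_SYZ} are checked by direct computation (Lemmas~\ref{F_global_image_lem}, \ref{F_global_smooth_locus_lem} and Theorem~\ref{diagram_thm}).
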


The above main theorem is further explained as follows.
Let $\Bbbk$ be an algebraically closed field. By $A_n$-singularity, we refer to the singular variety 
$
Z=Z(\Bbbk)=\operatorname{Spec} \Bbbk[u,v,z]/(uv-z^{n+1}) = \Bbbk^2/\mathbb Z_{n+1}
$.

On the A-side, we study K\"ahler and symplectic geometry over $\Bbbk=\mathbb C$. 
Take a monic polynomial $h(z)=(z-a_0)\cdots (z-a_n)$ with no multiple zeros and $h(0)\neq 0$.
Note that $\mathcal P = \{a_0, \dots, a_n\}$ represents a parameter point in the configuration space $\mathscr C=\mathrm{Conf}_{n+1}(\mathbb C)$.
Define $\bar{X} \subset \mathbb{C}^3$ by $uv = h(z)$, and let $X$ be the complement of $z=0$. We call this an \textit{$A_n$-smoothing} of $Z$.
With the standard form $\omega$ from $\mathbb C^3$, we define a special Lagrangian fibration 
$\pi:X \to \mathbb{R}\times \mathbb{R}_{>0}$ by $(u,v,z) = (\frac{1}{2}(|u|^2-|v|^2),|z|)$ following Goldstein \cite{goldstein2001calibrated} and Gross \cite{Gross_ex}.
We can view the projection $p$ to the $z$-plane as a Lefschetz fibration on $\bar X$ with $n + 1$ critical values located at $\mathcal P$.
Beware that the singular locus $\Delta=\Delta_{\mathcal P}$ depends on the choice of $\mathcal P$; for a generic choice, the $\Delta$ consists of $n+1$ \textit{focus-focus singular points}.

On the B-side, we study algebraic and Berkovich geometry over the Novikov field $\Bbbk= \Lambda = \mathbb C((T^{\mathbb R}))$.  
The minimal resolution of $Z$, referred to as the \textit{$A_n$-resolution}, is a non-affine toric surface $Y_\Sigma$ associated with the fan $\Sigma$ generated by the $n+2$ rays $(0,1),(1,1),\dots, (n+1,1)$ in $\mathbb Z^2$ (Figure \ref{figure_fan}).   
Let us define $Y=Y_\Sigma^*$ as the complement of the divisor $t=1$ for the evident toric morphism $1+y\equiv t:Y_\Sigma\to\Bbbk$.

Now, the aim of Theorem \ref{Main_thm_oversimplified} is to prove that $Y$ is SYZ mirror to $X$ in the sense of Definition \ref{SYZ_mirror_defn}. 
For a broader audience, let's directly present the solution to Theorem \ref{Main_thm_oversimplified}, without diving into the details of its verification for the moment.

\begin{figure}
	\captionsetup{font=footnotesize}
	\centering 
\begin{subfigure}{0.4\textwidth}
	\centering
	\includegraphics[width=7cm]{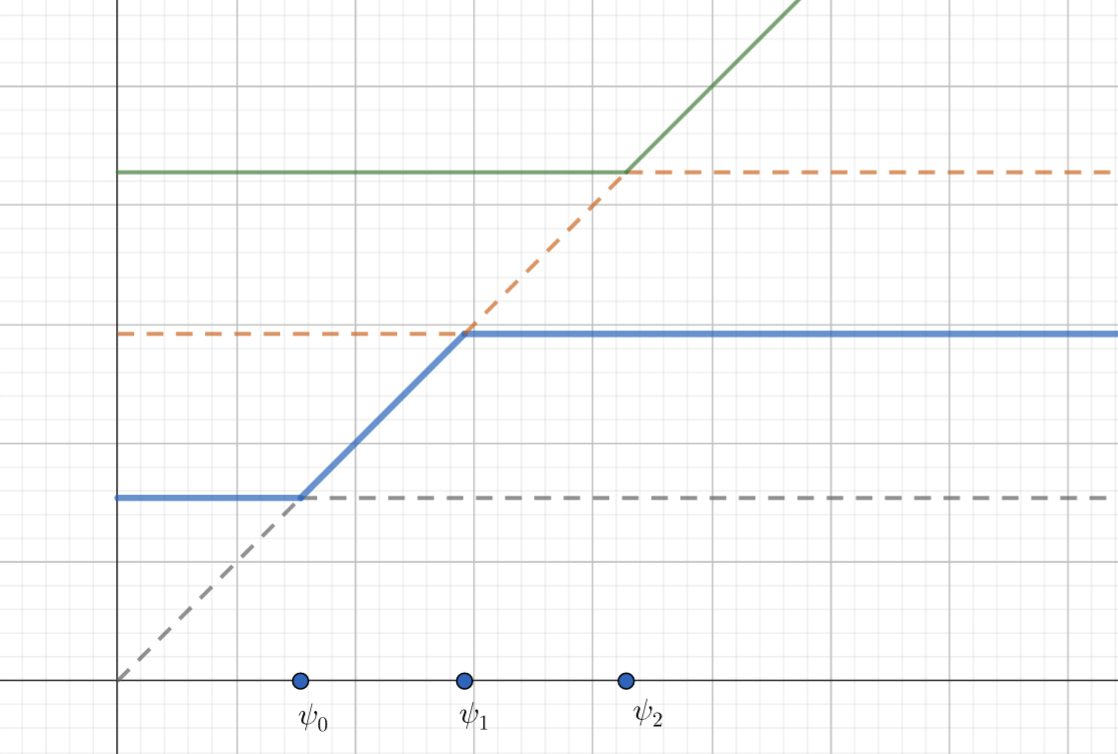}
	\caption{\tiny 
		}
	\label{figure_order_statistic}
\end{subfigure}
\hfill
\begin{subfigure}{0.44\textwidth}
	\centering
	\includegraphics[width=8cm]{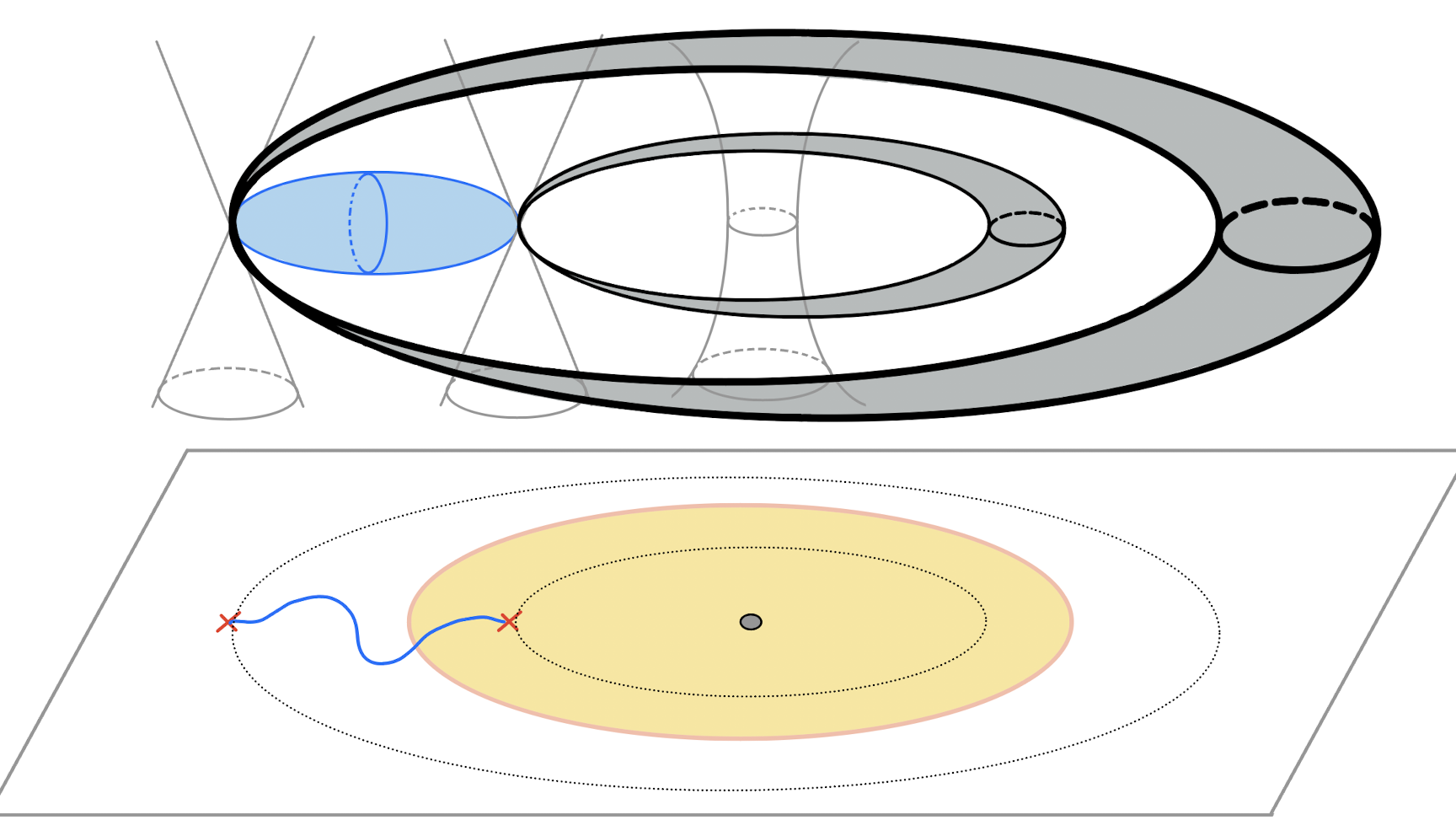}
	\caption{\tiny   }
	\label{figure_psi(s,r)}
\end{subfigure}
\caption{\scriptsize \textbf{(a)}: The 1st, 2nd, 3rd, and 4th \textit{order statistics} of the sample $\{x, \psi_0, \psi_1, \psi_2\}$ in the real variable $x$, ordered from bottom to top.
\textbf{(b)}: The area of the orange disk, $\psi(s,r)$, with radius $r$ for the reduced K\"ahler form at phase $s$, is smooth everywhere except at $s=0$ and $r=|a_j|$.
The two black pinched spheres are singular $\pi$-fibers over $(0,|a_{j-1}|)$ and $(0,|a_j|)$ in $B$. The blue one is a Lagrangian sphere.
\textit{When we apply the Dehn twist along the Lagrangian sphere in (b), we switch $\psi_1$ and $\psi_2$ in (a).}
}
\end{figure}

\begin{proof}[Proof of Theorem \ref{Main_thm_oversimplified} in a nutshell]

The first component of the Lagrangian fibration $\pi$ serves as the moment map for the $S^1$-action $(u,v,z)\mapsto (e^{it}u,e^{-it}v,z)$. The reduced space $\bar{X}_{red,s}=\mu^{-1}(s)/S^1$ of $\bar{X}$ can be identified with the complex plane $(\mathbb{C}_z,\omega_{red,s})$, equipped with the reduced K\"ahler form, via the projection map $p$.
Denote the $\omega_{{red}, s}$-symplectic area of the disk in $\mathbb C_z$ centered at the origin with radius $r$ by $\psi(s, r)$.
A subtle point is that $\psi=\psi_{\mathcal P}$ relies on $\mathcal P=\{a_k\}$ and is non-smooth at $(0,|a_k|)$ for all $k$. Note also that $\psi$ increases with $r$.
Moreover, away from the non-smooth points, the assignment $(s,r)\mapsto (s, \psi(s, r))$ gives a local coordinate chart for the integral affine structure on $B_0$.
Using homogeneous coordinates $[x_0:\cdots :x_{n+1}]$ in $Y_\Sigma$, 
we define a tropically continuous fibration 
$F=(F_0,F_1,\dots, F_{n+1}, \val(y)):Y^{\mathrm{an}} \to \mathbb R^{n+3}$
by setting
\[
F_k=\Big\{
\sum_{j=0}^{n+1} \  (j-k) \cdot \val(x_j)  +k\min\{0,\val(y)\}, 
\quad   \psi(\val(y),|a_0|) , \ \ \psi(\val(y),|a_1|) , \ \ \cdots  \  \ \psi(\val(y),|a_n|)
\Big\}_{[k]}
\]
where $0\leqslant k\leqslant n+1$ and $\val(\bullet)$ represents the non-archimedean valuation. The notation $\{\cdots\}_{[k]}$ denotes the $(k+1)$-th \textit{order statistic}, referring to the $(k+1)$-th smallest value of a sample of $n+2$ real numbers. It generalizes the $\min$ / $\max$ functions. See Figure \ref{figure_order_statistic}, Figure \ref{figure_order_statistics_degenerate}.

Define a topological embedding $j=(j_0,j_1,\dots, j_{n+1}, s): B=\mathbb R\times \mathbb R_{>0}\to\mathbb R^{n+3}$ by setting 
\[
j_k(s,r)=\Big\{
\psi(s,r), \psi(s, |a_0|), \psi(s,|a_1|), \dots, \psi(s,|a_n|) 
\Big\}_{[k]}
\]
where $0\leqslant k\leqslant n+1$. Define an analytic open domain $\mathscr Y=\{ |\prod_{j=0}^{n+1} x_j^j | <1 \}$ for the non-archimedean norm on the Novikov field $\Lambda$. Then, we can verify that $j(B)=F(\mathscr Y)$.
Now, we can define
\begin{equation}
	\label{f_eq_intro}
	f=f_{\mathcal P}=j^{-1}\circ F :  \mathscr Y \to B
\end{equation}
No matter the choice of $\mathcal P=\{a_0,\dots, a_n\}$, we can always confirm that the singular locus $\Delta=\Delta_{\mathcal P}$ of $f$ aligns exactly with that of $\pi$; we can always examine that $f_0:=f|_{B_0}$ induces an integral affine structure that is precisely identical to that of $\pi$. (The full details will be given in the body of this article.)
\end{proof}

The Lagrangian fibration $\pi=\pi_{\mathcal P}$, its singular locus $\Delta=\Delta_{\mathcal P}$, and its induced integral affine structure {\textit{all}} depend on the choice of $\mathcal P=\{a_0,\dots, a_n\}$ (cf. Figure \ref{figure_collision_singular_fiber}). For instance, if the norms $|a_k|$'s are pairwise distinct, the locus $\Delta$ has $n+1$ discrete focus-focus singularities. If all the norms are equal, the locus $\Delta$ just has a single singular point. Remarkably, no matter the choice of $\mathcal P=\{a_0,\dots, a_n\}$ in the configuration space $\mathscr C$, our formula (\ref{f_eq_intro}) for $f=f_{\mathcal P}$ \textit{{always}} delivers the desired solution, namely, satisfying all the conditions (i) (ii) (iii) in Conjecture \ref{conjecture_our_SYZ}.
In particular, as $\mathcal P$ moves within the configuration space, we may in principle witness the dynamic process of $f=f_{\mathcal P}$ that links the braid group action to the collision-and-scattering behavior of the singular locus $\Delta=\Delta_{\mathcal P}$ (see \S \ref{s_miscellaneous_topics}).


Here we adopt the strategy of Kontsevich and Soibelman \cite[\S 8]{KSAffine}: instead of directly finding $f$, we search for some Berkovich-continuous map $F:Y\to \mathbb R^N$ for a large integer $N$ such that the image of $F$ can be identified with $B$ via some map $j$.
The order statistic functions are designed to locally imitate the min/max functions and globally capture the integral affine structures (see Figure \ref{figure_order_statistic}).
Note that it is generally hard to find $f:Y\to B$ with the properties (i) and (ii) in Definition \ref{SYZ_mirror_defn}.
This question even totally makes sense without any Floer theory. But intriguingly, we explicitly identify such a parameter-dependent solution through a Floer-theoretic approach regarding item (iii).
Hence, our methodology should promise to be of value, and actually, (ii) is only a necessary but insufficient condition of the more fundamental (iii).

Note that matching integral affine structures goes beyond merely asserting consistent monodromy around each singularity. It needs precisely aligning two \textit{atlases} of integral affine coordinate systems on both base manifolds.
For instance, the \textit{singular} integral affine structure on $B=\mathbb R\times \mathbb R_{>0}$ subtly relies on $\mathcal P=\{a_0,\dots, a_n\}$ and the given symplectic form.
Accordingly, even with the discovery of a correct formula as (\ref{f_eq_intro}), we must confess that the subsequent verification of integral affine structures and singular loci as claimed below (\ref{f_eq_intro}) can be unfortunately a quite patience-demanding task.
Therefore, we also present more readily verifiable evidence, ensuring that a moderate investment of time suffices to witness some interesting phenomenon (see Observation \ref{observation_A} and Figure \ref{figure_observation}).
Concurrently, the preciseness of this geometric phenomenon should more or less serve as a harbinger for more profound investigations into mirror symmetry, unifying an array of mathematical domains, including Lagrangian Floer theory, homotopy theory for $A_\infty$ structures, non-archimedean analysis, and Berkovich geometry.

%



\vspace{1.5em}
\textbf{Acknowledgment . }
The author thanks the hospitality of the Simons Center for Geometry and Physics during a visit at Stony Brook in April-May 2023, where part of this work was completed. The author is also grateful to the organizers of Concluding Conference of the Simons Collaboration on Homological Mirror Symmetry, as well as Mohammed Abouzaid, Denis Auroux, Kenji Fukaya, Ludmil Katzarkov, Tony Pantev, Daniel Pomerleano, and Paul Seidel for valuable in-person conversations in the conference. The author also thanks Kwok Wai Chan, Paul Hacking, Mingyuan Hu, Yusuke Kawamoto, Siu-Cheong Lau, Wenyuan Li, Yu-Shen Lin, Mark McLean, Chenyang Xu, Tony Yue Yu, Eric Zaslow, and Shizhuo Zhang for useful discussions at various stages of this work.

\section{Topological wall-crossing and atlas of integral affine structure}

\subsection{Lagrangian fibration}
\label{ss_Lag_fib}
	Consider
$ \bar X =\{(u,v,z)\in \mathbb C^3 \mid uv=h(z)\}$
	where $h(z)=\prod_{k=0}^n (z-a_k)$.
Initially, we assume the norms $|a_k|$'s are pairwise distinct such that
$0<|a_0|<|a_1|<\dots <|a_n|<\infty$.
For the divisor $\mathscr D=\{z=0\}$ in $\bar X$, we define 
\[
X=\bar X\setminus \mathscr D \ .
\]

Introduce the following divisors:
	\begin{equation}
		\label{divisor_Duv_k_eq}
		\begin{aligned}
			\textstyle 	D_u^k=\{ u=0 , v\in\mathbb C,  z=a_k \}  \qquad D_u= \bigcup_{k=0}^n D_u^k \\
			\textstyle D_v^k = \{ u\in\mathbb C, v=0, z=a_k\} 	\qquad D_v=\bigcup_{k=0}^n D_v^k 
		\end{aligned}
	\end{equation}

We equip $X$ and $\bar X$ with the K\"ahler form obtained by restricting the standard K\"ahler form on $\mathbb C^3$.
Note that a smooth affine variety is a Stein manifold \cite[Example 2.8]{mclean2009lefschetz}.
There is a natural Hamiltonian $S^1$-action on $ \bar X$ given by
	\begin{equation}
		\label{S1_action_eq}
		e^{it} \cdot (u,v, z) = (e^{it} u, e^{-it}v, z)
	\end{equation}
	The associated moment map $\mu: \bar X \to\mathbb R$ is given by
	$
	\mu(u,v,z)=\tfrac{1}{2} (|u|^2-|v|^2) 
	$.
	The fixed points of the $S^1$-action are given by the $n+1$ points $p_k = (0, 0, a_k)$ for $0\leqslant k\leqslant n$.
	
	For any $s\in \mathbb R$, let
	$
	\bar X_{red,s}:= \mu^{-1}(s)/S^1
	$
	be the reduced space associated to the moment map $\mu$. Then, $\bar X_{red,s}$ is smooth and diffeomorphic to $\mathbb C$ via the projection 
	$p:(u,v, z)\mapsto z$. One may regard $p$ as a Lefschetz fibration as illustrated in Figure \ref{figure_psi(s,r)}.
	The reduced symplectic form is denoted by $\omega_{red,s}$.
	We can show that
	\begin{equation}
		\label{pi_Lagrantian_fibration_eq}
		\pi:X \to \mathbb R \times \mathbb R_{> 0} ,\qquad (u,v,z)\mapsto (\tfrac{1}{2}(|u|^2-|v|^2), \ |z| )
	\end{equation}
	is a special Lagrangian fibration whose discriminant locus is
	\[
	\Delta=\{(0,|a_0|), (0,|a_1|), \dots, (0,|a_n|) \}
	\]
	consisting of $n+1$ singular points.
	Observe that the images of the divisors $D_u^k$ and $D_v^k$ in (\ref{divisor_Duv_k_eq}) under the fibration map $\pi$ are given by the dashed half lines as illustrated in Figure \ref{figure_milnor_fiber}:
	\begin{equation}
		\label{pi_of_divisors}
		\pi(D_u^k)= \mathbb R_{\leqslant 0}\times \{|a_k|\} \qquad \pi(D_v^k)=\mathbb R_{\geqslant 0} \times \{|a_k|\}
	\end{equation}

	Denote by $L_q:=\pi^{-1}(q)$ the Lagrangian fiber over $q=(s,r)$.
	Set $B=\mathbb R\times \mathbb R_{>0}$ and $B_0=B\setminus \Delta$, and we define
	\begin{equation}
		\label{pi_0_smooth_part_eq}
		\pi_0:=\pi|_{B_0}: X_0\equiv \pi^{-1}(B_0)\to B_0
	\end{equation}

\begin{figure}
	\centering
		\captionsetup{font=footnotesize}
	\begin{subfigure}{0.4\textwidth}
		\includegraphics[width=8cm]{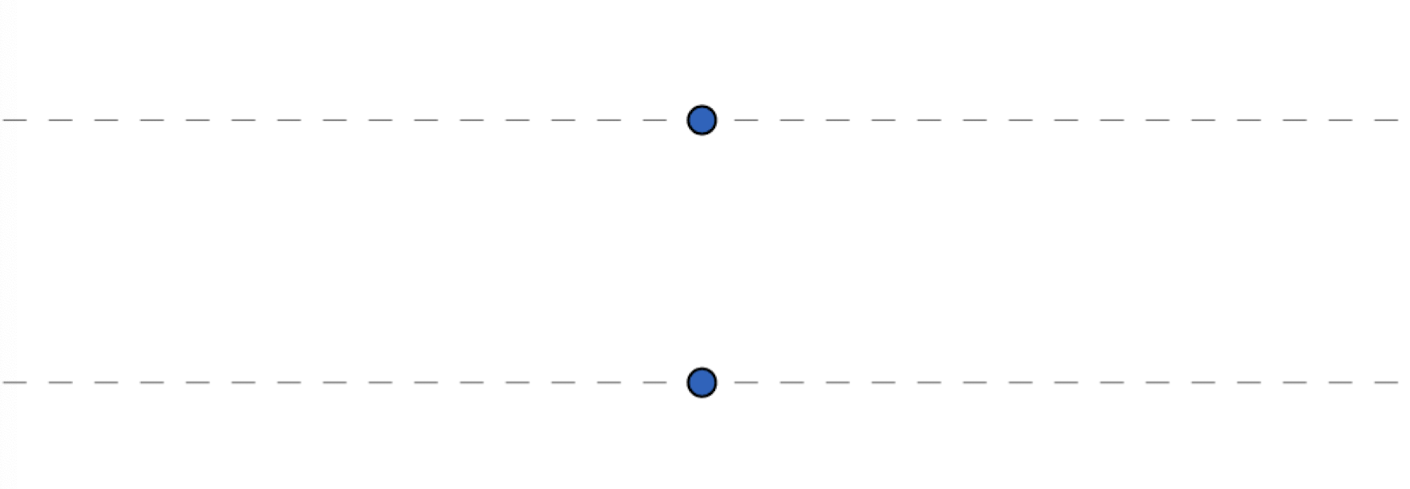}
		\caption{\scriptsize Base space near two singular points}
		\label{figure_milnor_fiber}
	\end{subfigure}
	\hfill
	\begin{subfigure}{0.44\textwidth}
		\includegraphics[width=8cm]{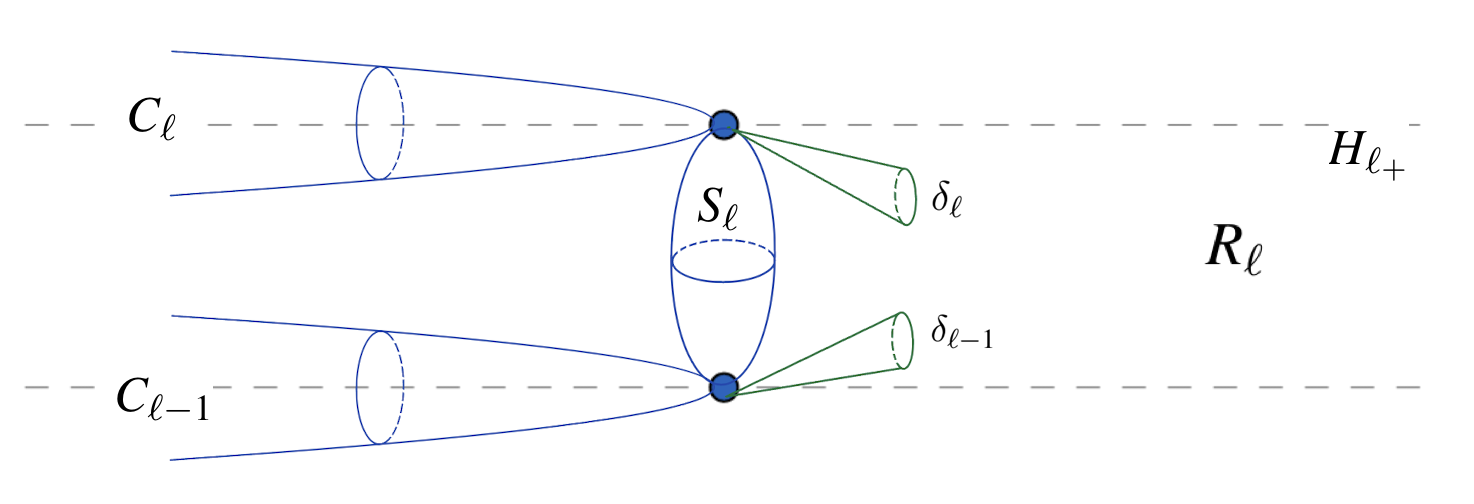}
		\caption{\scriptsize  Several related topological classes}
		\label{figure_milnor_fiber_sphere}
	\end{subfigure}
\caption{}
\end{figure}

\subsection{Compactification for $A_n$ smoothing}
Following J. D. Evans in \cite[Section 7]{evans2011symplectic}, we review and study the compactification of $\bar X$ as follows.
The blow-up $\mathcal V$ of $\mathbb {CP}^2$ at the $(n+1)$ points $[a_k :0 : 1]$ ($k=0,1,\dots n$) can be regarded as the subvariety of $M=\mathbb {CP}^2\times \prod_{k=0}^n \mathbb {CP}^1_k$ defined by the equations
\begin{equation}
	\label{V_eq}
	\zeta_k y=x-a_k w
\end{equation}
for $k=0,1,\dots n$
in the coordinates
$[x:y:w]\in\mathbb {CP}^2$ and $\zeta_k=[\zeta_k': \zeta_k''] \in\mathbb {CP}^1$
on $M$.

\begin{itemize}
	\item For $0\leqslant k\leqslant n$, we denote by $C_k$ the exceptional sphere of the blow-up; namely, $C_k$ is given by $[x:y:w]=[a_k:0:1]$, $\zeta_j=\infty$ for $j\neq k$, and arbitrary $\zeta_k\in\mathbb {CP}^1$.
	\item  The map on $M$ sending to $[x:w]$, restricted on $\mathcal V$, gives rise to a pencil of curves
	$
	P_t =\{  ([x:y:w], \zeta_k ) \in \mathcal V \mid  t=[x:w]\} 
	$
	parameterized by $t\in\mathbb {CP}^1$. We define
	$C_{n+1}=P_\infty$. It is the proper transform of the projective line $[x:y:0]$ in $\mathbb {CP}^2$. Namely, it is a sphere in $\mathcal V$ given by $w=0$ and $\zeta_k=[x:y]$ for all $k$.
	\item We define $C_{n+2}$ to be the sphere in $\mathcal V$ given by $y=0$, all $\zeta_k=\infty$, and arbitrary $[x:w] \in \mathbb {CP}^1$.
\end{itemize}

The following result is due to J. D. Evans \cite[Lemma 7.1]{evans2011symplectic}.

\begin{lem}
	\label{biholomorphic_lem}
	$\mathcal V\setminus (C_{n+1}\cup C_{n+2})$ is biholomorphic to $\bar X$, and $\mathcal V\setminus \bigcup_{k=0}^{n+2} C_k$ is biholomorphic to $\mathbb C^*\times \mathbb C$.
\end{lem}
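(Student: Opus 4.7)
The plan is to write down explicit holomorphic inverse maps in a single affine chart and control the blow-up behaviour by hand. Removing $C_{n+1}$ confines us to the affine chart $w=1$, where $\mathcal V$ is covered by the generic chart $(x,y)\in\mathbb C^2$ (on which every $\zeta_k=(x-a_k)/y$ is automatically determined when $y\neq 0$) together with one exceptional chart near each $C_j$ carrying coordinates $(y,\zeta_j)$ via $x=a_j+y\zeta_j$, on which $\zeta_k\to\infty$ for $k\neq j$ as $y\to 0$.

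Define $\varphi:\bar X\to\mathcal V$ by $[x:y:w]=[z:u:1]$ together with, for each $k$, the pair of projective formulas $\zeta_k=[z-a_k:u]=[v:\prod_{\ell\neq k}(z-a_\ell)]$, which agree on their common domain because the defining relation $uv=h(z)$ makes their cross-product vanish. The first formula is indeterminate only on $\{u=0,\ z=a_k\}$ and the second only on $\{v=0,\ z=a_\ell\text{ for some }\ell\neq k\}$; since the $a_\ell$ are pairwise distinct these loci are disjoint, so the pair patches into a globally holomorphic map. A short check shows $\varphi(\bar X)\subset\mathcal V$ (both formulas satisfy $\zeta_k y=x-a_k w$), that the image avoids $C_{n+1}$ (since $w=1$), and that it avoids $C_{n+2}$ (at $u=0,\ z=a_j$ the coordinate $\zeta_j$ is finite because $\prod_{\ell\neq j}(a_j-a_\ell)\neq 0$). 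For the inverse $\psi$, set $z=x$, $u=y$, $v=h(x)/y$. A priori $v$ is meromorphic with polar locus $\{y=0\}$, but in $\mathcal V$ this set decomposes as $C_{n+2}\cup\bigcup_j C_j$, and on each exceptional chart $(y,\zeta_j)$ the factorisation $h(x)=y\zeta_j\prod_{k\neq j}(a_j-a_k+y\zeta_j)$ shows that $v=\zeta_j\prod_{k\neq j}(a_j-a_k+y\zeta_j)$ extends holomorphically across $C_j$, with boundary value $\zeta_j\prod_{k\neq j}(a_j-a_k)$. Hence $\psi$ is holomorphic on $\mathcal V\setminus(C_{n+1}\cup C_{n+2})$, the identity $uv=h(z)$ places its image in $\bar X$, and a chart-by-chart computation gives $\varphi\circ\psi=\id$ and $\psi\circ\varphi=\id$, completing the first biholomorphism.

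For the second statement, further removing $C_0\cup\dots\cup C_n$ kills the remaining locus $\{y=0\}\cap\{w=1\}=\bigcup_j C_j\cup C_{n+2}$, so the complement collapses to the single chart $\{(x,y)\in\mathbb C^2:y\neq 0\}\cong\mathbb C\times\mathbb C^*$, and swapping the factors yields $\mathbb C^*\times\mathbb C$. The main technical obstacle is the global holomorphy of $\varphi$ along the loci $\{u=0,\ z=a_k\}$, where the naive formula $\zeta_k=[z-a_k:u]$ degenerates to $[0:0]$; the complementary formula $\zeta_k=[v:\prod_{\ell\neq k}(z-a_\ell)]$, forced upon us by $uv=h(z)$, is exactly what resolves this degeneracy, and it simultaneously encodes how the exceptional divisors of the blow-up parametrise the vanishing directions at each critical value $a_k$.
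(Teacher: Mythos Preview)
Your proof is correct and follows essentially the same route as the paper: both define the map $\bar X\to\mathcal V$ by $[x:y:w]=[z:u:1]$ with $\zeta_k=[z-a_k:u]$, use the alternate expression $\zeta_k=[v:\prod_{\ell\neq k}(z-a_\ell)]$ (forced by $uv=h(z)$) to resolve the indeterminacy at $u=0,\ z=a_k$, and deduce the second biholomorphism by identifying the further complement with $\{u\neq 0\}\cong\mathbb C^*_u\times\mathbb C_z$. You supply more detail on the inverse $\psi$ and its holomorphic extension across the exceptional charts, which the paper omits, but the underlying argument is the same.
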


Roughly speaking, we can define
\[
\Phi: \bar X \to \mathcal V \qquad (u,v, z) \mapsto \left([z:u: 1 ], \  [z-a_0: u], \dots,  [z-a_n : u] \right) \ .
\]
Here since $uv=h(z)=\prod_k (z-a_k)$ for $(u,v,z)\in \bar X$, we actually identify $[z-a_k : u]$ with $[v(z-a_k) : h(z)]= [v : \prod_{j\neq k} (z-a_j)]$ whenever $u=0$; in particular, since all the $a_k$'s are distinct, at least one of these coordinates is not $\infty=[1:0]$.
Thus, the map $\Phi$ misses $C_{n+2}$. It also misses $C_{n+1}$ by definition, and one can finally check $\Phi:\bar X\to \mathcal V\setminus (C_{n+1}\cup C_{n+2})$ is onto.
For $0\leqslant k\leqslant n$, the preimage $\Phi^{-1}(C_k)$ is exactly given by the divisor $D_u^k$ in (\ref{divisor_Duv_k_eq}).
Then, $\mathcal V\setminus \bigcup_{k=0}^{n+2} C_k$ can be identified via $\Phi$ with $\bar X\setminus D_u$, and the latter can be further identified with $\mathbb C^*_u \times \mathbb C_z$ via $v= u^{-1} h(z)$.
Besides, we observe that the $\Phi$-image of the divisor $D^k_v$ in (\ref{divisor_Duv_k_eq}) is given by $C'_k\setminus (C_{n+1}\cup C_{n+2})$ where $C'_k$ is a sphere given by $x=a_k w$, $\zeta_j=[ (a_k-a_j)w : y]$, and $\zeta_k=[0:1]$ for arbitrary $[y:w]\in\mathbb {CP}^1$.
Note that the second Betti number of $\mathcal V$ is $n+2$ (cf. \cite[Theorem 7.31]{voisin2002hodge}), and the second singular homology $H_2(\mathcal V; \mathbb Z)$ can be generated by $[C_0],[C_1],\dots, [C_{n+2}]$ under the constraint
$\textstyle 
[C_{n+2}]=[C_{n+1}]-\sum_{k=0}^n[C_k]$ \cite[Page 73-74]{evans2011symplectic}.

Let ``$\cdot$'' denote the intersection form
$H_2(\mathcal V;\mathbb Z)\times H_2(\mathcal V;\mathbb Z) \to \mathbb Z$; see e.g. \cite[\S 3.2]{scorpan2022wild}. 
Then, it is routine to check (cf. Figure \ref{figure_milnor_fiber_sphere})
\[
\begin{cases}
	C_j\cdot C_k=0   & 0\leqslant j, k\leqslant n, \ j\neq k \\
	C_k\cdot C_k = -1  ,\, \, \, \,
	C_k\cdot C_{n+1}=0 , \, \, \, \,
	C_k\cdot C_{n+2}=1 & 0\leqslant k\leqslant n \\
	C_{n+1}\cdot C_{n+2} =1 
\end{cases} \ .
\]
Besides,
\[
\begin{cases}
	C'_k \cdot C_j=0 &  0\leqslant j,k\leqslant n, \  j\neq k \\
	C'_k \cdot C_k=1, \,\,\,\, C'_k\cdot C_{n+1}=1 , \,\,\,\, C'_k\cdot C_{n+2}=0 & 0\leqslant k\leqslant n 
\end{cases}
\]
where we note that $C'_j\cap C_k=\varnothing$ whenever $j\neq k$ and that $C'_k\cap C_k$ is the single point whose $\Phi$-preimage is exactly the $k$-th fixed point $p_k=(0,0,a_k)$ of the $S^1$-action (\ref{S1_action_eq}).
It follows that $[C'_k]=[C_{n+1}]-[C_k]$ in $H_2(\mathcal V;\mathbb Z)$.
Next, we aim to find $H_2(\bar X ; \mathbb Z)\cong H_2(\mathcal V\setminus (C_{n+1}\cup C_{n+2}); \mathbb Z)$. 
We introduce $S_\ell = C_{\ell-1} - C_\ell $ in $\mathcal V$ for $1\leqslant \ell \leqslant n$ (cf. Figure \ref{figure_milnor_fiber_sphere}). Then, $S_\ell \cdot C_{n+1}=S_\ell \cdot C_{n+2}=0$.
By standard algebraic topology, one can check that (see also \cite[p215]{Seidel2006biased})
\begin{equation}
	\label{H2bar X_eq}
H_2(\bar X;\mathbb Z)\cong \pi_2(\bar X) \cong  \mathbb Z\{ S_1,\dots, S_n\}
\end{equation}

From now on, we will always take the identification from the biholomorphic map $\Phi: \bar X\cong \mathcal V\setminus (C_{n+1}\cup C_{n+2})$.
Recall $\Phi^{-1}(C_k)=D_u^k$ and $\Phi^{-1}(C'_k)=D_v^k$ for $0\leqslant k\leqslant n$ and the divisors in (\ref{divisor_Duv_k_eq}).
Then, we observe that for $1\leqslant \ell\leqslant n$,
\begin{equation}
	\label{S_ell_intersection_number}
	S_\ell \cdot  D_u^k =
	\begin{cases}
		1 & \text{if} \  k=\ell \\
		-1 & \text{if} \ k=\ell-1 \\
		0 & \text{if} \ k\neq \ell-1, \ell
	\end{cases}
	\qquad
	\text{and}
	\qquad
		S_\ell \cdot  D_v^k =
	\begin{cases}
		-1 & \text{if} \  k=\ell \\
		1 & \text{if} \ k=\ell-1 \\
		0 & \text{if} \ k\neq \ell-1, \ell
	\end{cases}
\end{equation}

\subsection{Topological local systems}
We consider the following local systems over $B_0$:
	\begin{equation}
		\label{local_system_H_2_H_1_eq}
		\mathscr R_1:=R^1\pi_*(\mathbb Z)\equiv \bigcup_{q\in B_0} \pi_1(L_q), \qquad  \mathscr R_2 :=\mathscr R_2(\bar X):=\bigcup_{q\in B_0} \pi_2( \bar X,L_q)
	\end{equation}
	Abusing the notations, the fibers $\pi_1(L_q)$ and $\pi_2(\bar X,L_q)$ of $\mathscr R_1$ and $\mathscr R_2$ actually denote the corresponding images of the Hurewicz maps in the (relative) \textit{homology} groups $H_1(L_q)$ and $H_2(\bar X,L_q)$ respectively rather than the homotopy groups. These notations attempt to avoid using $H_1(L_q)$ and $H^1(L_q)$ in the same time, and we apologize for the possibly ambiguous notations.
	By (\ref{H2bar X_eq}), there is a natural action of $\pi_2(X)\equiv H_2(X;\mathbb Z)$ on $\mathscr R_2$ as well.
	Note that $\mathscr R_2$ relies on the choice of the partial compactification space $\bar X$ of $X$.
	
	\begin{rmk}
		We must understand the monodromy behavior of the above local systems across multiple walls, while most of existing literature only study the disks in $\mathscr R_2$ across a single wall.
		Although the latter is sufficient for mirror \textit{space} identifications (cf. \cite{Chan_An_Tduality}), we must understand the former monodromy data for mirror \textit{fibration} realizations.
		This justifies why we need to provide many additional details later on.
	\end{rmk}
	
	The local systems $\mathscr R_1$ and $\mathscr R_2$ can be determined by their trivializations over a covering of $B_0$ by contractible open subsets. There are of course many different such coverings of $B_0$.
	Notice that the orbits of the $S^1$-action (\ref{S1_action_eq}) naturally give rise to a global section of $\mathscr R_1$, denoted by
\begin{equation}
	\label{sigma_R_1}
\sigma \in \Gamma(B_0;\mathscr R_1) .
\end{equation}

For $0\leqslant k\leqslant n$, we define
\[
H_{k+}=\mathbb R_{>0}\times  \{|a_k|\}  \qquad H_{k-}=\mathbb R_{<0}\times  \{|a_k|\}
\]
and define the open regions
	\begin{align*}
		R_0 &= \mathbb R\times (  0 ,  \  |a_0|)     \\
		R_\ell &= \mathbb R\times (|a_{\ell-1}|, \ |a_\ell|)  \qquad \quad (1\leqslant \ell\leqslant n)  \\
		R_{n+1} &= \mathbb R\times (|a_n|,  \infty ) 
	\end{align*}
in $B_0$.
By slightly thickening these open regions, we find a covering $\{U_0,\dots, U_{n+1}\}$ of $B_0$ by $n+2$ contractible open subsets where
\begin{equation}
	\label{U_k_eq}
	\begin{aligned}
	U_0 &=R_0 \cup \mathscr N_{0+} \cup \mathscr N_{0-} \\
	U_\ell &= R_\ell  \cup \mathscr N_{(\ell-1)-}\cup \mathscr N_{(\ell-1)+} \cup \mathscr N_{\ell-} \cup \mathscr N_{\ell +} \qquad (1\leqslant \ell \leqslant n) \\
	U_{n+1} &= R_{n+1}  \cup \mathscr N_{n+}\cup\mathscr N_{n-}  
\end{aligned}
\end{equation}
where we let $\mathscr N_{k\pm}$ denote a sufficiently small neighborhood of $H_{k\pm}$ in $B_0$.

\subsubsection{Preferred disks I}
The above covering of $B_0$ is convenient to study the wall-crossing phenomenon.
But, it is also useful to introduce the following covering $\{ \mathcal B_{k\pm}:  0\leqslant k\leqslant n\}$ of $B_0$:
\begin{align*}
	\begin{aligned}
		\mathcal B_{k+} = R_k\sqcup R_{k+1} \sqcup H_{k+} \\
		\mathcal B_{k-} =  R_k\sqcup R_{k+1} \sqcup H_{k-} 
	\end{aligned}
	\qquad (0\leqslant k\leqslant n)
\end{align*}
For each $0\leqslant k\leqslant n$, we further set
\[
\mathcal B_k=\mathcal B_{k+}\cup \mathcal B_{k-} \ .
\]
Although it is not contractible, there is a natural section (cf. Figure \ref{figure_milnor_fiber_sphere})
\begin{equation}
	\label{delta_k_eq}
\delta_k\in \Gamma(\mathcal B_k; \mathscr R_2)
\end{equation}
of $\mathscr R_2$ over $\mathcal B_k$ such that $\partial\delta_k$ coincides with the section $\sigma$ of $S^1$-orbits in (\ref{sigma_R_1}).
In reality, for any $q\in \mathcal B_k$, we take a path $q(t)$ from $q$ to the singular point $(0, |a_k|)$ that avoids $H_{(k-1)\pm}$ and $H_{(k+1)\pm}$. Note that $\sigma$ degenerates at the singular point, then we define $\delta_k(q)$ to be the disk obtained by the union of $\sigma(q(t))$.

\begin{rmk}
	\label{Maslov_0_explicit_rmk}
	When $q=(s, |a_k|) \in H_{k+}$ for $s>0$, the class $\delta_k(q)\in \pi_2(\bar X,L_q)$ can be represented by an explicit holomorphic disk $\zeta\mapsto (\zeta \sqrt{2s}, 0 , |a_k| )$ for $\zeta\in\mathbb D$. Similarly, when $q=(s,|a_k|)\in H_{k-}$ for $s<0$, the class $\delta_k(q)$ can be represented by an anti-holomorphic disk $\zeta\mapsto (0, \bar \zeta \sqrt{2s}, |a_k|)$.
\end{rmk}

\begin{prop}
	\label{wall_place_prop}
	The Lagrangian torus fiber $L_q$ over a smooth point $q=(s,r)\in B_0$ bounds a nonconstant Maslov-0 holomorphic disk in $X=\bar X\setminus \mathscr  D$ if and only if $q$ is contained in $H_{k+}\cup H_{k-}$ for some $0\leqslant k\leqslant n$. Therefore, we call $H_{k\pm}$ the \textbf{\textit{walls}}.
\end{prop}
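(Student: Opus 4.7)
The plan is to reduce the existence question to a one-variable complex analysis argument using the holomorphic projection $p:X\to\mathbb{C}^*$, $(u,v,z)\mapsto z$, which sends $L_q$ into the circle $\{|z|=r\}$. For any holomorphic disk $\phi:(\mathbb D,\partial\mathbb D)\to(X,L_q)$, the composition $f:=p\circ\phi$ is holomorphic with values in $\mathbb{C}^*$---and here I crucially use that we have removed the divisor $\mathscr D=\{z=0\}$---with $|f|\equiv r$ on $\partial\mathbb D$. Applying the maximum modulus principle to $f$ and then to $1/f$ (which is holomorphic because $f$ avoids $0$), one obtains $|f|\leqslant r$ and $|f|\geqslant r$ on $\mathbb D$; hence $|f|\equiv r$, forcing $f$ to be constant equal to some $z_0$ with $|z_0|=r$. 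Thus the image of $\phi$ is contained in the fiber $p^{-1}(z_0)\cap X$.

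Next I would split into two cases according to whether $z_0\in\mathcal P=\{a_0,\dots,a_n\}$. If $h(z_0)\neq 0$, the fiber $p^{-1}(z_0)=\{(u,v,z_0):uv=h(z_0)\}$ is a smooth affine conic isomorphic to $\mathbb{C}^*$ via projection to $u$, and the boundary slice $L_q\cap p^{-1}(z_0)$ is the single circle $\{|u|=r_u\}$ where $r_u$ is determined by $r_u^2-|h(z_0)|^2/r_u^2=2s$. Repeating the maximum/minimum modulus argument on $u\circ\phi:\mathbb D\to\mathbb{C}^*$ forces $\phi$ itself to be constant. This rules out nonconstant disks whenever $r\neq|a_k|$ for every $k$.

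If instead $z_0=a_k$ with $|a_k|=r$, the fiber $p^{-1}(a_k)\cap X$ is the nodal conic $\{u=0\}\cup\{v=0\}$. Connectedness of $\mathbb D$ forces $\phi$ to take values in a single branch. On $\{v=0\}$ the Lagrangian boundary condition reduces to $|u|^2=2s$, which admits a nonempty circle only when $s>0$; the explicit map $\zeta\mapsto(\sqrt{2s}\,\zeta,0,a_k)$ already noted in Remark \ref{Maslov_0_explicit_rmk} is then a nonconstant holomorphic disk, so $q=(s,|a_k|)\in H_{k+}$. Symmetrically, the branch $\{u=0\}$ yields nonconstant anti-holomorphic disks precisely when $s<0$, giving $q\in H_{k-}$. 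The remaining case $s=0,\,r=|a_k|$ is a singular point of $\pi$ and is excluded from $B_0$.

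Finally, the Maslov-zero condition is automatic: since $X$ is Calabi-Yau and the $\pi$-fibers are graded Lagrangians with respect to the holomorphic volume form (item (a) of Conjecture \ref{conjecture_our_SYZ}), the Maslov class of $L_q$ is exact, so every holomorphic disk bounded by $L_q$ has vanishing Maslov index. I anticipate the only delicate point to be the very first step---showing that $p\circ\phi$ must be constant---after which the rest is elementary geometry of the two types of fibers of $p$, and no wall-crossing technology is required beyond what is already available.
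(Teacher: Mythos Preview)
Your approach is correct and coincides with the paper's sketch: project to $z$ via $p$, use the maximum principle (applied to $p\circ\phi$ and its inverse) to force $z\circ\phi$ constant, then analyze the fiber $p^{-1}(z_0)$. You in fact supply more detail than the paper, which only gestures at the argument and cites \cite{AuTDual,CLL12,Chan_An_Tduality}.

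One slip to fix: for $q\in H_{k-}$ (i.e.\ $s<0$) you say the branch $\{u=0\}$ yields nonconstant \emph{anti-holomorphic} disks. But the proposition asks for holomorphic disks, and there is one: $\zeta\mapsto(0,\sqrt{-2s}\,\zeta,a_k)$ is holomorphic, lands in $X$, and has boundary on $L_q$ since $\tfrac12(|u|^2-|v|^2)=s$ and $|z|=|a_k|$. The anti-holomorphic disk appearing in Remark~\ref{Maslov_0_explicit_rmk} is written that way only to represent the specific oriented class $\delta_k$, not because holomorphic disks are absent. With this correction your `if' direction is complete and the proof goes through.
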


\begin{proof}[Sketch of proof]
	This is standard (see \cite{AuTDual}, \cite[Proposition 3.1]{Chan_An_Tduality}, and \cite[Lemma 5.2]{CLL12}). The `if' part has been justified by the above explicit constructions in Remark \ref{Maslov_0_explicit_rmk}. For the `only if' part, suppose $\varphi:(\mathbb D,\partial\mathbb D)\to (X,L_q)$ is such a nontrivial holomorphic disk. It avoids the divisor $\mathscr D=\{z=0\}$ by \cite[Lemma 3.1]{AuTDual}. We then have a holomorphic map $z\circ \varphi:\mathbb D \to \mathbb C^*$ with $|z\circ \varphi|_{\partial\mathbb D}|$ constant. Thus, the maximal principal implies that $z\circ \varphi$ is constant. It follows that $uv\circ \varphi$ is constant, and $\varphi$ can be nontrivial only if $uv\circ \varphi=0$. Hence, $z\circ \varphi$ must be one of the roots $a_k$'s of $h(z)$.
\end{proof}

For $0<\ell\leqslant n$, we aim to study how the $\delta_\ell$ differs from $\delta_{\ell-1}$, as sections of the local system $\mathscr R_2$, on the overlap open subset 
$\mathcal B_\ell\cap \mathcal B_{\ell-1} \equiv R_\ell$.

Since we only consider topological classes, we may slightly thicken it to be $\tilde R_\ell=R_\ell \cup \mathscr N_{(\ell-1)+} \cup \mathscr N_{\ell+}$ adding the small neighborhoods $\mathscr N_{(\ell-1)+}$ and $\mathscr N_{\ell+}$ of $H_{(\ell-1)+}$ and $H_{\ell+}$.
The intersection numbers $\delta_i\cdot D_v^j$ for $i,j=\ell-1,\ell$ are \textit{not} well-defined, while $\delta_i\cdot D_u^j$ are still well-defined (cf. \cite{CLL12}). In fact, it is routine to check that
$\delta_{\ell-1}\cdot D_u^{\ell-1}=\delta_\ell \cdot D_u^\ell=1$, $\delta_{\ell-1}\cdot D_u^j=0$ for $j\neq \ell-1$, and $\delta_\ell\cdot D_u^j=0$ for $j\neq \ell$.
Now, let $q$ be an arbitrary point in $R_\ell$. Since both $\partial\delta_\ell(q)$ and $\partial\delta_{\ell-1}(q)$ agree with the $\sigma(q)$, it follows that
$\delta_\ell-\delta_{\ell-1}$ can be viewed as a class in $\pi_2(X)$. And, the above discussion of the intersection numbers infers that $(\delta_\ell-\delta_{\ell-1})\cdot D_u^\ell=1$, $(\delta_\ell-\delta_{\ell-1})\cdot D_u^{\ell-1} =-1$, and $(\delta_\ell-\delta_{\ell-1}) \cdot D_u^j=0$ for $j\neq \ell-1,\ell$.
According to (\ref{S_ell_intersection_number}), we finally conclude that (cf. Figure \ref{figure_milnor_fiber_sphere})
\begin{equation}
	\label{delta_ell_ell-1_relation}
	\delta_\ell-\delta_{\ell-1}=S_\ell  \qquad \text{over} \ R_\ell
\end{equation}
for $1\leqslant \ell\leqslant n$.

Observe that all these $\delta_k$'s avoid the divisor $\mathscr D$, the $\partial\delta_k$'s coincide with the $S^1$-orbits, and $\omega$ is exact on $X=\bar X\setminus \mathscr  D$. Thus, an explicit calculation deduces that the moment map $\mu$ can be identified with the symplectic areas of these $\delta_k$'s (up to the scalar $\frac{1}{2\pi}$). 
Moreover, the symplectic area of the class $S_\ell$ is actually zero. In fact, the topological class $S_\ell$ can be represented by a Lagrangian sphere (see e.g. \cite[\S 7.3]{evans_2021_book}, \cite[\S 7]{Seidel_Maydanskiy_2010lefschetz}, \cite[\S 6c]{Seidel_Khovanov2002quivers}). In other words, we have
\begin{equation}
	\label{moment_map_restrict_V_ell_eq}
	\mu|_{\mathcal B_k}= \frac{1}{2\pi} \int_{\delta_k} \omega =E(\delta_k)
\end{equation}
for $0\leqslant k\leqslant n$. In practice, we just keep in mind that whenever $q=(s,r)\in \mathcal B_k$, we have $s=E(\delta_k(q))$.

\subsubsection{Preferred disks II}
\label{sss_preferred_disks_beta}

For each $0\leqslant k\leqslant n$, we fix a representative point $q_k=(0,r_k)$ in $U_k$ where
\begin{equation}
	\label{r_k_eq}
0<r_0<|a_0|<\cdots < |a_{n-1}|<r_{n}<|a_n|<r_{n+1} <\infty
\end{equation}
From now on, we always let $\zeta$ denote a complex variable in the closed unit disk $\mathbb D=\{z\in\mathbb C\mid |z|\leqslant 1\}$.

	\begin{itemize}
		\item Let $g_0(\zeta)$ be a square root of the nonvanishing holomorphic function 
		\[
		\zeta\mapsto \prod_{k=0}^n (r_0 \zeta-a_k) \equiv h(r_0\zeta) \  .
		\] 
		Then, 
		\[
		\zeta\mapsto (g_0(\zeta),  g_0(\zeta), r_0\zeta) 
		\]
		is a holomorphic disk in $\bar X$ bounded by $L_{q_0}$. Its topological class gives a section of $\mathscr R_2$ over the contractible open subset $U_0$, denoted by
		\[
		 \beta_0 =\beta_{0,\varnothing} \in \Gamma( U_0; \mathscr R_2)  \  .
		\]
		
		\item For $0<\ell \leqslant n+1$, let $g_\ell(\zeta)$ be a square root of the nonvanishing holomorphic function 
		\[
		\zeta\mapsto  
		\prod_{i=0}^{\ell-1} (r_\ell - \bar a_i \zeta) \cdot \prod_{i=\ell}^n (r_\ell \zeta - a_i)
		\equiv h(r_\ell\zeta) \cdot \left( \prod_{i=0}^{\ell-1} \frac{r_\ell \zeta - a_i}{r_\ell -\bar a_i \zeta} \right)^{-1}  \  .
		\]
		For any 
		\[
		I \subset \{0,1,\dots, \ell-1\} =:[\ell] \ ,
		\]
		we introduce the following holomorphic disk
		\[
		\zeta\mapsto \left(
		g_\ell(\zeta) \prod_{i\in I} \frac{r_\ell \zeta - a_i}{r_\ell -\bar a_i \zeta} , \qquad 
		g_\ell(\zeta) \prod_{i\notin I} \frac{r_\ell \zeta - a_i}{r_\ell -\bar a_i \zeta}, \qquad r_\ell \zeta 
		\right) 
		\]
		in $\bar X$ bounded by $L_{q_\ell}$
		Its topological class defines a section of $\mathscr R_2$ over the contractible open subset $U_\ell$, denoted by
		\[
		\beta_{\ell, I} \in \Gamma(U_\ell ; \mathscr R_2)  \  .
		\]
		It can be essentially characterized by the intersection numbers as follows:
		\begin{equation}
			\label{beta_I_Duv_eq}
		\beta_{\ell, I}\cdot D_u^k = 
		\begin{cases}
			1 & \text{if} \ k\in I \\
			0 & \text{if} \ k\in [\ell]\setminus  I  \\
			0 & \text{if} \ \ell \leqslant k\leqslant n 
		\end{cases}
\qquad \text{and} \qquad
		\beta_{\ell, I}\cdot D_v^k = 
		\begin{cases}
			0 & \text{if} \ k\in I \\
			1 & \text{if} \ k\in [\ell]\setminus  I  \\
			0 & \text{if} \ \ell \leqslant k\leqslant n 
		\end{cases}
		\end{equation}

	\end{itemize}

By construction, for any $0\leqslant \ell\leqslant n$ and $I$, we also have
\begin{equation}
	\label{beta_I_anti_divisor_eq}
	\beta_{\ell, I}\cdot \mathscr D=1
\end{equation}

Clearly, these sections $\beta_{\ell,I}$'s are linear dependent in $\Gamma(U_\ell; \mathscr R_2)$. We can further find the linear relations among them as follows.
We set 
\[\beta_\ell=\beta_{\ell,\varnothing}  \  .
\]
By construction, $\partial\beta_{\ell, I}-\partial\beta_\ell$ is given by $|I|\cdot \sigma$ topologically.
Since the kernel of $\partial$ is $\pi_2(\bar X)$, it follows from (\ref{H2bar X_eq}) that 
\[
\beta_{\ell, I} =\beta_\ell + |I| \delta_{\ell} + \sum_{j=1}^n m_j\cdot S_j
\]
over $R_\ell$ for some integers $m_j\in\mathbb Z$ ($1\leqslant j\leqslant n$).
Note that $\delta_\ell|_{R_\ell} \cdot D_u^\ell=1$ and $\delta_\ell|_{R_\ell}\cdot D_u^j=0$ for other $j\neq \ell$.
Note also that the same topological classes must give the same intersection numbers.
Taking the various intersection numbers with $D_u^k$ for $0\leqslant k\leqslant n$ on the both sides, we can use the relations (\ref{beta_I_Duv_eq}) and (\ref{S_ell_intersection_number}) to find the following relations:
\[
\begin{cases}
	\one_I(0) = -m_1 \\
	\one_I(k) = -m_{k+1}+m_k  & \text{if } 0<k<\ell \\
	-|I|=-m_{\ell+1} +m_\ell  \\
	0= -m_{k+1}+m_k & \text{if }  k> \ell 
\end{cases}
\]
where $\one_I(j)$ is the characteristic function with regard to the set $I$ in the sense that it takes $1$ when $j\in I$ and $0$ when $j\notin I$.
We also remark that one can derive the same result by taking the intersection numbers with $D_v^k$'s instead.

Finally, after some tedious but routine computations, we obtain the following concise formula
\begin{equation}
	\label{beta_I_back_to_beta}
	\beta_{\ell, I} =\beta_\ell + |I| \delta_\ell  -\sum_{j=1}^\ell |I\cap [j]| S_j  \qquad \text{over } R_\ell
\end{equation}
where we write $[j]=\{0,1,\dots, j-1\}$.

\subsubsection{Topological wall-crossing}
We aim to study the relations between the two collections $\{\beta_{\ell, I}\}_{I\subset [\ell] }$ and $\{\beta_{\ell+1, I} \}_{I \subset [\ell+1] }$ restricted over $\mathscr N_{\ell+}$ or $\mathscr N_{\ell-}$.
Recall that the intersection
\[
U_\ell\cap U_{\ell+1} = \mathscr N_{\ell+} \sqcup \mathscr N_{\ell-}
\]
and that the union $U_\ell\cup U_{\ell+1}$ admits the section $\delta_k$ of $\mathscr R_2$ (\ref{delta_k_eq}).
The main difference is that the intersection numbers with $D_v^\ell$ are undefined over $\mathscr N_{\ell+}$, while the intersection numbers with $D_u^\ell$ are undefined over $\mathscr N_{\ell-}$ because it follows from (\ref{pi_of_divisors}) that $\pi(D_u^\ell)=H_{\ell+}$ and $\pi(D_v^\ell)=H_{\ell-}$.
Due to (\ref{beta_I_back_to_beta}), it suffices to study $\beta_\ell$ and $\beta_{\ell+1}$.

Over $\mathscr N_{\ell+}$, we may write
\[
\beta_{\ell+1} = \beta_\ell + \lambda  \delta_\ell+ \sum_{j=1}^n \nu_{j} S_j 
\]
for some integers $\lambda$ and $\nu_{j}$.
Observe that by Remark \ref{Maslov_0_explicit_rmk}, we have $\delta_\ell|_{\mathscr N_{\ell+}} \cdot D_u^k = 1$ for $k=\ell$ and $0$ for other $k$.
Since the intersection numbers with $D_u^k$'s make sense over $\mathscr N_+$, applying (\ref{beta_I_Duv_eq}) implies:
\[
\begin{cases}
	0=- \nu_1 \\
	0= -\nu_{k+1} +\nu_k  & \text{if } 0<k<\ell \\
	0=\lambda -\nu_{\ell+1} +\nu_\ell  \\
	0=-\nu_{k+1} +\nu_k & \text{if } k>\ell  \\
	0=\nu_n
\end{cases}  \  .
\]
Thus, we conclude that
$
\beta_{\ell+1}=\beta_\ell$ over $\mathscr N_{\ell+}$.

Over $\mathscr N_{\ell-}$, we may similarly write
\[
\beta_{\ell+1} =\beta_\ell +\lambda'\delta_\ell +\sum_{j=1}^n \nu_j'S_j 
\]
for some integers $\lambda'$ and $\nu_j'$. By Remark \ref{Maslov_0_explicit_rmk}, we know $\delta_\ell|_{\mathscr N_{\ell-}} \cdot D_v^k = -1$ for $k=\ell$ and $0$ for other $k$.
Now, the intersection numbers with $D_u^k$'s are no longer well-defined, but the ones with $D_v^k$'s still make sense. It follows from (\ref{beta_I_Duv_eq}) that
\[
\begin{cases}
	1=1+ \nu'_1 \\
	1=1+\nu'_{k+1} -\nu'_k  & \text{if } 0<k<\ell \\
	1=-\lambda' +\nu'_{\ell+1} -\nu'_\ell  \\
	0=\nu'_{k+1} - \nu'_k & \text{if } k>\ell  \\
	0=\nu'_n
\end{cases}  \  .
\]
Hence, we obtain
$
\beta_{\ell+1}=\beta_\ell-\delta_\ell$ over $\mathscr N_{\ell-}$.
In summary,
\begin{equation}
	\label{beta_ell_beta_ell+1_eq}
	\beta_\ell =\begin{cases}
		\beta_{\ell+1} & \text{over} \ \mathscr N_{\ell+} \\
		\beta_{\ell+1} +\delta_\ell & \text{over} \ \mathscr N_{\ell-}
	\end{cases}
\end{equation}
Together with the relation (\ref{beta_I_back_to_beta}) and (\ref{delta_ell_ell-1_relation}), we have completely determined the absolute monodromy information of the local system $\mathscr R_2$. The general formula
is as follows:
\begin{equation}
	\label{beta_ell_beta_ell+1_general_eq}
	\beta_{\ell, I} =
	\begin{cases}
		\beta_{\ell+1,I}  & \text{over} \ \mathscr N_{\ell+} \\
		\beta_{\ell+1, I\cup\{\ell\} } & \text{over} \ \mathscr N_{\ell-} 
	\end{cases}
\end{equation}

\subsection{Atlas of integral affine structure from topological wall-crossing}
By the `topological wall-crossing' we mean an overall understanding of the monodromy of the local system $\mathscr R_2$. Applying the symplectic form $\omega$ to the disks, as sections of the local system $\mathscr R_2$, yields the integral affine coordinates.

For $0\leqslant k\leqslant n+1$, there is a local frame (also called basis) of $\mathscr R_2$ over $U_k$ given by $\{\delta_k,\beta_k\}$. Denote the corresponding integral affine coordinate chart by
\begin{equation}
	\label{chi_k_eq}
	\chi_k: U_k\to  V_k\subset  \mathbb R^2 \qquad q=(s,r) \mapsto ( E(\delta_k(q)), E(\beta_k(q)))  \equiv (s, E(\beta_k))
\end{equation}
where $E(\beta_k(q))=\frac{1}{2\pi} \int_{\beta_k(q)} \omega$ and $E(\delta_k(q))=\frac{1}{2\pi} \int_{\delta_k(q)} \omega$ agree with the moment map $\mu$ by (\ref{moment_map_restrict_V_ell_eq}).
Recall that the collection $\{U_k\}$ forms an open covering of $B_0$. By virtue of (\ref{U_k_eq}), $U_k$ only intersects with the adjacent $U_{k-1}$ and $U_{k+1}$, and $U_k\cap U_{k+1}=\mathscr N_{k+} \sqcup \mathscr N_{k-}$ for any $0\leqslant k\leqslant n$.
Further exploiting the relation (\ref{beta_ell_beta_ell+1_eq}) concludes that
\begin{equation}
	\label{E_ell_ell+1_eq}
	E(\beta_k)=E(\beta_{k+1})+\min\{0,s\}
\end{equation}
In other words, for $(s,t)\in \chi_k(\mathscr N_{k+}\sqcup \mathscr N_{k-})$, we have
$
\chi_{k} \circ \chi_{k+1}^{-1} (s,t)  =  (s, t +\min\{0,s\} )
$
where we view $t$ as $E(\beta_{k+1})$ which is sent to $E(\beta_{k+1})+\min\{0,s\}$ and is exactly identified with $E(\beta_k)$.

On the other hand, we define
\begin{equation}
	\label{psi_U_k}
\psi(q)=\psi(s,r)=
	E(\beta_k)+k \min\{0,s\} \qquad \text{when} \
q\in U_k
\end{equation}
By (\ref{E_ell_ell+1_eq}), this assignment initially gives a well-define smooth function on $B_0$, denoted as $\psi:B_0\to\mathbb R$.
By (\ref{beta_I_back_to_beta}), we remark that $E(\beta_k) +ks=E(\beta_k + k\delta_k)=E(\beta_{k,[k]})$ for $s<0$. Since there is no wall-crossing over $R_k$ and $\beta_{k,[k]}$ can be represented by a holomorphic disk at $q_k$, we know $\beta_{k,[k]}$ can be represented by a holomorphic disk at any point $q\in R_k$. Thus, $E(\beta_{k,[k]})$ is always positive on $R_k$. So, $\psi$ is a positive smooth function on $B_0$. Besides, since the Lagrangian fibration $\pi_0$ over $B_0$ continuously extend to the singular Lagrangian fibration $\pi$ over $B=B_0\sqcup \Delta$, we see that $\psi$ can be continuously extended over the singular points $\Delta$.
In summary, we obtain a continuous function (cf. Figure \ref{figure_psi(s,r)})
\begin{equation}
	 \label{psi_over_B_eq}
 \psi: B\to \mathbb R_+
\end{equation}
such that the relation (\ref{psi_U_k}) holds and $(s,\psi)$ form a set of action coordinates on any contractible open subset in $B_0$.


The following estimates of the symplectic areas turn out to be quite useful in the non-archimedean analytic side. One may readily accept it given the intuition of Figure \ref{figure_psi(s,r)}.

\begin{prop}
	\label{increasing_psi_prop}
	For a fixed $s$, the function $\psi(q)=\psi(s,r)$ is increasing in $r$.
\end{prop}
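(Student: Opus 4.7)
My plan is to identify $\psi(s,r)$ with the $\omega_{red,s}$-symplectic area of the disk $D_r := \{|z| \leqslant r\} \subset \mathbb C_z$, and then read off monotonicity from the manifest positivity of $\omega_{red,s}$ in (\ref{omega_red_s_eq}). To anchor the identification at the base point $q_0 = (0, r_0) \in U_0$, observe that the explicit preferred representative $\zeta \mapsto (g_0(\zeta), g_0(\zeta), r_0\zeta)$ of $\beta_0(q_0)$ satisfies $|u| = |v| = |g_0(\zeta)|$ throughout the closed disk, hence lies entirely in $\mu^{-1}(0)$ and is a smooth section of the $S^1$-bundle $p:\mu^{-1}(0) \to \mathbb C_z$ over $D_{r_0}$. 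A short calculation using $g_0^2(\zeta) = h(r_0\zeta)$ (so that $|g_0'|^2 = r_0^2|h'(r_0\zeta)|^2/(4|h(r_0\zeta)|)$) gives $2|g_0'(\zeta)|^2 + r_0^2 = r_0^2 \left(\frac{|h'(r_0\zeta)|^2}{2|h(r_0\zeta)|} + 1\right)$, from which the pullback $\Phi^*\omega$ matches the reduced form after substitution $z = r_0\zeta$, yielding $E(\beta_0(q_0)) = \tfrac{1}{2\pi}\int_{D_{r_0}}\omega_{red,0}$. Propagation to a general $(s,r) \in B_0$ follows from (\ref{beta_ell_beta_ell+1_eq})--(\ref{E_ell_ell+1_eq}) together with (\ref{beta_I_back_to_beta}) and the vanishing $E(S_j) = 0$: the correction $k\min\{0,s\}$ in (\ref{psi_U_k}) is precisely what absorbs the multiples of $E(\delta_k) = s$ that appear when changing charts through the walls, and yields a continuous global function equal to the reduced area. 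Conceptually, since $\pi = (\mu, |z|)$ is $S^1$-equivariant with second component descending to the "radius" Lagrangian fibration $|z|:\mathbb C_z \to \mathbb R_{>0}$, Arnold--Liouville forces $\psi(s,\cdot)$ to coincide with the enclosed-area coordinate for $\omega_{red,s}$.

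Given the identification, the conclusion is immediate. By (\ref{omega_red_s_eq}) the coefficient $\frac{|h'(z)|^2}{2\sqrt{|h(z)|^2+s^2}} + 1$ is strictly positive everywhere on $\mathbb C_z$; the $+1$ summand guarantees strict positivity even at the zeros of $h$ when $s = 0$. Hence on each smooth open region $R_\ell = \mathbb R \times (|a_{\ell-1}|,|a_\ell|)$,
\[
\frac{\partial \psi}{\partial r}(s,r) = \frac{r}{2\pi}\int_0^{2\pi}\left(\frac{|h'(re^{i\theta})|^2}{2\sqrt{|h(re^{i\theta})|^2 + s^2}} + 1\right) d\theta > 0,
\]
so $\psi$ is strictly increasing in $r$ on each $R_\ell$. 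Combining strict monotonicity on the smooth open regions with the global continuity of $\psi$ on $B$ from (\ref{psi_over_B_eq}), in particular across the non-smooth points $(0,|a_k|) \in \Delta$, yields strict monotonicity in $r$ throughout $B$.

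The main obstacle is the identification step in the case $k \geqslant 1$. The Blaschke factors in the explicit representative $\beta_{\ell, I}$ from \S\ref{sss_preferred_disks_beta} force $|u| \neq |v|$ in the interior of the disk (the interior lies in $\mu > 0$, with $\mu = 0$ achieved only on the boundary), so the representative is no longer contained in any single level set of $\mu$ and direct symplectic reduction does not apply. The remedy is to rewrite $\beta_k(q)$ systematically via (\ref{beta_I_back_to_beta}) and (\ref{beta_ell_beta_ell+1_eq}) in terms of $\beta_0$, the classes $\delta_j$ with energy $s$, and the Lagrangian spheres $S_j$ with zero area; the consistent bookkeeping of signs across the walls $H_{k\pm}$ is exactly what produces the $k\min\{0,s\}$ correction in (\ref{psi_U_k}). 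Once this is done, the derivative calculation and the continuity argument above complete the proof.
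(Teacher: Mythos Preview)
Your proposal is correct and follows the same core idea as the paper: identify $\psi(s,r)$ with the $\omega_{red,s}$-area of the disk $\{|z|\leqslant r\}$, then invoke positivity of the reduced form (\ref{omega_red_s_eq}). The difference lies entirely in how the identification is made. You anchor at the explicit representative of $\beta_0(q_0)$, verify by hand that it sits in $\mu^{-1}(0)$, and then propagate to other charts via (\ref{beta_I_back_to_beta}), (\ref{beta_ell_beta_ell+1_eq}), and the $k\min\{0,s\}$ correction; this forces you to worry about the Blaschke factors pushing the $\beta_{\ell,I}$ representatives off the level set, and to track wall-crossing bookkeeping. The paper sidesteps all of this with a one-line observation: symplectic area is a homotopy invariant, so any topological disk representing the relevant class may be homotoped (rel boundary, inside $X$) to lie in $\mu^{-1}(s)$, after which $\int u^*\omega=\int (p\circ u)^*\omega_{red,s}$ follows directly from the definition of symplectic reduction. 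Your route has the virtue of making the equality $\psi(s,r)=\tfrac{1}{2\pi}\int_{D_r}\omega_{red,s}$ fully explicit via the derivative formula, whereas the paper's homotopy argument is cleaner but leaves that formula implicit.
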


\begin{proof}
The proof is almost identical to the one in \cite{Yuan_local_SYZ}. Observe first that the symplectic area is topological.
Let $u=u_r$ be a topological disk whose symplectic area is $\psi(s,r)$. Taking a homotopy if necessary, we may assume $u$ is contained in the level set $\mu^{-1}(s)$ of the moment map $\mu$. Then, $\int u^* \omega= \int (p\circ u)^* \omega_{red,s}$ where $\omega_{red,s}$ is the reduced form and the projection map $p:(u,v,z)\mapsto z$ identifies the reduced space $X_{red,s}$ with $\mathbb C$. Since the disk $p\circ u$ is exactly enclosed by the circle of the radius $r$ centered at the origin in $\mathbb C$, the $\omega$-symplectic area of $u$, being identified with the $\omega_{red,s}$-symplectic area of $p\circ u$, is increasing in $r$.
\end{proof}

\begin{rmk}
	\label{psi_reduced_kahler_geometry_rmk}
	It is worth noting that the function $\psi: B\to\mathbb R$ given in (\ref{psi_over_B_eq}) effectively reflects the K\"ahler geometry of the $A_n$-smoothing. In fact, consider that we can identify the reduced space at moment $s$ with the complex plane $(\mathbb C, \omega_{red,s})$ equipped with the reduced K\"ahler forms. In this context, $\psi(s,r)$ represents the $\omega_{red,s}$-symplectic area of the disk with radius $r$, centered at the origin.
	In summary, the function $\psi$ essentially encapsulates all the reduced K\"ahler spaces associated with the natural $S^1$-action (\ref{S_ell_intersection_number}).
	Intriguingly, although it plays a crucial role, $\psi(s,r)$ is only continuous on $B$ and smooth on $B_0$. This limitation serves as a key justification for employing the concept of tropically continuous fibration.
\end{rmk}

\section{Quantum correction and T-duality construction}

The previous section is almost purely topological. Now, we aim to enrich the wall-crossing picture by including both the ingredients of the symplectic geometry and the non-archimedean geometry.

\subsection{Symplectic and non-archimedean integrable system: review}
\label{ss_integrable_system}

In the symplectic world, the Arnold-Liouville theorem tells that a Lagrangian fibration $\pi:X\to B$ admits the action-angle coordinates near the smooth fiber $L_q$ over a point $q$ in the smooth locus $B_0$.
Roughly put, a base point $q$ is called \textit{smooth} if there is a neighborhood $U$ of $q$ such that the fibration $\pi^{-1}(U) \to U$ is isomorphic to $\mathrm{Log}^{-1}(V)\to V$ that covers an integral affine coordinate chart $U\to V$. Namely, we have the following commutative diagram:
\[
\xymatrix{
	\pi^{-1}(U)\ar[d]^\pi \ar[rr] & & \mathrm{Log}^{-1}(V) \ar[d] \\
	U\ar[rr] & & V
}
\]
Here
\begin{equation}
	\label{Log_map_eq}
	\mathrm{Log} : (\mathbb C^*)^n \equiv \mathbb R^n \times (\mathbb R /2\pi \mathbb Z)^n \to \mathbb R^n
\end{equation}
is the natural map sending 
\[
z_k=e^{r_k+i\theta_k} \qquad (r_k\in\mathbb R, \quad \theta_k\in\mathbb R/2\pi\mathbb Z, \quad \text{and  } 1\leqslant k\leqslant n)
\]
to $r_k$, namely, $z_k\mapsto \log|z_k|$. Besides, the total space of $\mathrm{Log}$ is equipped with the standard symplectic form $\sum_k dr_k\wedge d\theta_k$ on the complex torus.
Here $(r_k)$ and $(\theta_k)$ are called action and angle coordinates respectively.

Now, in the world of non-archimedean geometry, there is an extremely similar definition, called \textit{affinoid torus fibration}, due to Kontsevich-Soibelman \cite[\S 4]{KSAffine}.

Let $\Bbbk$ be a non-archimedean field. The example we keep in mind is the \textit{Novikov field}
\[
\Lambda=\mathbb C((T^{\mathbb R}))=\left\{ x=
\sum_{i=0}^\infty a_i T^{\lambda_i} \mid a_i\in\mathbb C, \lambda_i \nearrow \infty
\right\}
\]
which admits a non-archimedean valuation map (resp. norm)
\[
\val(x) =\min\{ \lambda_i \mid a_i\neq 0\}  \qquad (\text{resp.} \ |x|=e^{-\val(x)})  \  .
\]
This field is commonly used in the realm of symplectic geometry especially concerning various Floer-theoretic invariants, but temporarily we just work with a general non-archimedean field $\Bbbk$.

A non-archimedean version of GAGA principle exists, allowing for a functorial association from any algebraic variety $Y$ (or a scheme of locally finite type over $\Bbbk$) to a Berkovich analytic space $Y^{\mathrm{an}}$ (see e.g. \cite[\S 3.4]{Berkovich_2012spectral}).
We will not delve into Berkovich theory, as our focus is on the analytification of an affine algebraic variety over $\Bbbk$.
Set-theoretically, the analytification $Y^{\mathrm{an}}$ comprises all closed points of $Y$ and certain extra generic points of non-archimedean seminorms in the Berkovich sense.
However, in practice, it is often sufficient for an overall understanding to focus only on the closed points and think of the generic points implicitly.
For readers more familiar with Tate's rigid analytic geometry \cite{Tate_origin}, it is worth noting that there is a one-to-one correspondence between rigid analytic spaces and good Berkovich spaces \cite{Berkovich1993etale}. One may alternatively consider the rigid analytification $Y^{rig, \mathrm{an}}$ of the algebraic variety $Y$. Set-theoretically, it is the same as the set of closed points of $Y$ \cite[p. 113]{BoschBook}; however, it only admits the structure of a G-topology, which is not a genuine topological space. Consequently, in most cases, modern Berkovich geometry is a superior theory, as the addition of generic points renders $Y^{\mathrm{an}}$ a true topological space. Moreover, when the algebraic variety $Y$ is separated, it is known that $Y^{\mathrm{an}}$ is a Hausdorff space \cite[3.4.8]{Berkovich_2012spectral}.

Initially, we consider the algebraic variety $\mathbb G_m^n=\mathrm{Spec} (\Bbbk[y_1^\pm,\dots, y_n^\pm])$, and let $(\mathbb G_m^n)^{\mathrm{an}}$ be its Berkovich analytification.
However, by the above discussion and for clarity, let's only consider the set $(\Bbbk^*)^n$ of the closed points in the analytic torus $(\mathbb G_m^n)^{\mathrm{an}}$ where we put $\Bbbk^*= \Bbbk\setminus\{0\}$. Next, we consider the \textit{tropicalization map}:
\begin{equation}
	\label{trop_map_eq}
	\trop: (\Bbbk^*)^n \to \mathbb R^n
\end{equation}
defined by $z_k \mapsto -\log |z_k|\equiv \val(z_k)$.
Be careful that here we use the non-archimedean norm and valuation.
Observant readers may discern that it is fundamentally provided by the same formula as the previous $\mathrm{Log}$ over $\mathbb C$, with the sole distinction being the usage of the norm on $\Bbbk$ rather than on $\mathbb C$. Consequently, by substituting the map $\mathrm{Log}$ over $\mathbb C$ with the map $\trop$ over $\Bbbk$, the Lagrangian fibration in the symplectic context should admit a fairly natural counterpart in the non-archimedean context. Further elaboration is provided below.

Let $f:  Y^{\mathrm{an}} \to B$ be a continuous map.
Note that the continuity makes sense since a Berkovich space is a topological space as said above.
A base point $q$ is called \textit{\textbf{smooth}} or \textit{\textbf{$f$-smooth}} if there is a neighborhood $U$ of $q$ such that the fibration $f^{-1}(U)\to U$ is isomorphic as analytic spaces over $\Bbbk$ to a fibration $\trop^{-1}(V)\to V$ that covers a homeomorphism $U\to V$.
Namely, we have a very similar commutative diagram as follows: (cf. \cite{KSAffine}	and \cite{NA_nonarchimedean_SYZ})
\begin{equation}
	\label{affinoid_torus_fibration_diagram}
\xymatrix{
	f^{-1}(U)\ar[d]^\pi \ar[rr] & & \trop^{-1}(V) \ar[d] \\
	U\ar[rr] & & V
}
\end{equation}
Remark that the isomorphism $f^{-1}(U)\to \trop^{-1}(V)$ amounts to specify $n$ invertible analytic functions on $f^{-1}(U)$.
If we write $B_0$ for the set of all $f$-smooth points, then due to \cite[\S 4, Theorem 1]{KSAffine}, there is a natural integral affine structures on $B_0$ induced by the above homeomorphisms $U\cong V$. More specifically, let $U\subset B_0$ be a small open subset, the set of integral affine functions on $U$ is given by
\[
\mathrm{Aff} (U)=\{ \val(h) \mid h \ \text{is an invertible analytic function on $f^{-1}(U)$} \ \}  \  .
\]

The restriction of such $f$ over $B_0$ is often called an \textit{affinoid torus fibration}, the non-archimedean analog of smooth Lagrangian torus fibration or Hamiltonian integrable system in the symplectic context.

\subsubsection{Definition of tropically continuous map}
\label{sss_tropically_continuous_map_defn}

Let $\mathcal Y$ be a Berkovich analytic space over a non-archimedean field and let $\mathcal B\subset \mathbb R^m$ be a topological manifold that is embedded into the Euclidean space $\mathbb R^m$.
Let $F:\mathcal Y\to  \mathcal B$ be a continuous map with respect to the Berkovich topology on $\mathcal Y$ and Euclidean topology.

\begin{defn}
	\label{tropically_continuous_defn}
	We say $F$ is \textit{tropically continuous} if for any point $x$ in $\mathcal Y$, there exists non-zero rational functions $f_1,\dots, f_N$ on an analytic neighborhood $\mathcal U$ of $x$, and there exists a continuous map $\varphi:U\to\mathbb R^m$ on an open subset $U\subset [-\infty, +\infty]^N$ such that
	$
	F|_{\mathcal U} = \varphi(\val(f_1),\dots, \val(f_N))
	$.
\end{defn}


\begin{rmk}
	Following the definition by Chambert-Loir and Ducros in their work \cite[(3.1.6)]{Formes_Chambert_2012} closely, the functions $f_1, \dots, f_N$ should be invertible analytic functions. However, we relax the requirement to only nonzero rational functions, aligning more with Kontsevich-Soibelman's work \cite[\S 4.1]{KSAffine}. Indeed, we need to include a continuous non-smooth function $\varphi: U \to \mathbb{R}^m$ as explained in Remark \ref{psi_reduced_kahler_geometry_rmk} concerning the A-side K\"ahler geometry. This is presented in \cite{Formes_Chambert_2012} but not in \cite{KSAffine}. It may pose new problems in Hodge-theoretic aspects of Berkovich geometry but currently does not impede our geometric scope for SYZ duality in Conjecture \ref{conjecture_our_SYZ}.
\end{rmk}

\begin{rmk}
	Under the above definition, if $U \subset \mathbb{R}^N$ with $N = m$ and $\varphi$ represents the identity function up to an integral affine transformation, then the local structure recovers that of an affinoid torus fibration. Tropical continuous fibration is a notion that serves to make sense of the singular extension of an affinoid torus fibration in a suitable manner, by imposing additional mild constraints on the extension.
	Moreover, we note that if $\jmath: \mathcal B\to\mathcal B'$ is a homeomorphism, then $\jmath \circ F$ is also a tropically continuous map and one can check that the smooth locus is also preserved. In practice, choosing an appropriate $\jmath$ can transfer $F$ into a more explicit form.
\end{rmk}

\subsection{Family Floer SYZ duality construction: generalities}
\label{ss_family_floer_dual}

\subsubsection{Background and motivation}
The Strominger-Yau-Zaslow (SYZ) conjecture originally emerged from string-theoretic physical concepts such as T-duality and D-branes. Nonetheless, translating physical arguments into mathematical conjectures and statements is often a highly challenging task. Following D. Joyce \cite[\S 9.4]{Joyce_book}, a preliminary mathematical approximation of the SYZ conjecture can be stated as follows:

\begin{conjecture}
	\label{conjecture_SYZ_conventional}
	Let $X$ and $Y$ be ``mirror'' Calabi-Yau manifolds. Then (under some additional conditions) there should exist a base manifold $B$ and surjective, continuous maps $\pi:X\to B$ and $f:Y\to B$, with fibers $L_q=\pi^{-1}(q)$ and $F_q=f^{-1}(q)$ for $b\in B$, and a closed set $\Delta$ in $B$ with $B\setminus \Delta$ dense, such that
	
	\begin{enumerate}[(a)]
		\item For each $b\in B\setminus \Delta$, the fibers $L_q$ and $F_q$ are nonsingular special Lagrangian tori in $X$ and $Y$, which are in some sense `dual' to one another.
		\item For each $b\in \Delta$, the fibers $L_q$ and $F_q$ are singular Lagrangian submanifolds in $X$ and $Y$.
	\end{enumerate}
\end{conjecture}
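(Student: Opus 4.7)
The statement of Conjecture \ref{conjecture_SYZ_conventional} is intentionally vague: the phrase ``in some sense dual to one another'' is the crux of the entire SYZ program and must be replaced by a mathematically rigorous condition before any proof attempt is possible. The first step of my plan is therefore the refinement to Conjecture \ref{conjecture_our_SYZ}: the mirror $\mathscr{Y}$ should be a Berkovich analytic space over the Novikov field $\Lambda$, equipped with a tropically continuous map $f:\mathscr{Y}\to B$ in the sense of Definition \ref{tropically_continuous_defn}, whose restriction $f_0=f|_{B_0}$ is isomorphic to the canonical dual affinoid torus fibration associated with $\pi_0$ as in \S \ref{ss_integrable_system}. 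This single condition rigidifies what ``dual'' means, and automatically forces the matching of the integral affine structures and of the smooth loci on the shared base $B$.

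Second, rather than attempting a general proof, I would verify the refined conjecture in concrete local singularity models and then aim to globalize by a non-archimedean partition-of-unity argument (following the Gross--Wilson patching philosophy and the recent Hartog-type ideas of Groman--Varolgunes). For the $A_n$ focus-focus local model, the concrete steps are: (i) set up the Lagrangian fibration $\pi:X\to B$ using the $S^1$-moment map $\mu=\tfrac12(|u|^2-|v|^2)$ and the modulus $|z|$, and identify its singular locus $\Delta=\{(0,|a_k|)\}$; (ii) compute the monodromy of the local systems $\mathscr{R}_1,\mathscr{R}_2$ of (relative) homology of Lagrangian fibers \emph{across all $n+1$ walls simultaneously}, which is the novel complication compared with prior literature that typically handles only a single wall; (iii) extract from the wall-crossing data the integral affine atlas $(\chi_k)$ and the continuous but non-smooth reduced-K\"ahler function $\psi(s,r)$ of (\ref{psi_over_B_eq}); (iv) build the analytic domain $\mathscr{Y}=\{|\prod_j x_j^j|<1\}$ inside $Y_\Sigma^{\mathrm{an}}$ together with a tropically continuous map $F$ valued in $\mathbb{R}^{n+3}$; (v) check that $j(B)=F(\mathscr{Y})$ so that $f=j^{-1}\circ F$ is a well-defined continuous fibration onto $B$, and verify conditions (i)--(iii) of Conjecture \ref{conjecture_our_SYZ} by matching singular loci, affine coordinates, and the gluing data of invertible analytic functions coming from the disk counts.

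The main obstacle is step (iv): discovering the explicit order-statistic formula for $F$ in (\ref{f_eq_intro}). The difficulty is that $F$ must simultaneously be tropically continuous on the Berkovich side, recover the wall-crossing transformations dictated by the Floer-theoretic quantum corrections on the symplectic side, and accommodate the merely continuous reduced K\"ahler function $\psi$. The order statistics $\{\cdot\}_{[k]}$ arise essentially as the Berkovich partition-of-unity device described in Remark \ref{rmk_order_statistic_rmk_intro}, and pinning them down requires an iterative back-and-forth between the explicit holomorphic disks of Remark \ref{Maslov_0_explicit_rmk}, the toric geometry of $Y_\Sigma$, and the monodromy relations (\ref{beta_ell_beta_ell+1_general_eq}). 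Once such an $F$ is identified, the remaining verifications of singular-locus matching and of integral affine compatibility are lengthy but essentially mechanical, and the coincidence recorded in Observation \ref{observation_A} between $\pi(\mathcal{L}_i)$ and $f(\mathcal{D}_i)$ serves as a powerful \emph{a posteriori} sanity check that the explicit formula is correct.
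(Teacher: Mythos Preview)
Your proposal is correct and mirrors the paper's own approach: you rightly observe that Conjecture~\ref{conjecture_SYZ_conventional} is too vague to admit a proof, refine it to the precise Conjecture~\ref{conjecture_our_SYZ}, and then outline exactly the verification strategy the paper carries out for the $A_n$ case (monodromy across all walls, the order-statistic formula for $F$, and the final matching in Theorem~\ref{diagram_thm}). The paper does not claim to prove Conjecture~\ref{conjecture_SYZ_conventional} itself---it presents it only as historical motivation before passing to the precise version---so your recognition that refinement must precede any argument is the essential point.
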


As Joyce \cite{Joyce_book} points out, this version of SYZ conjecture faces challenges. The original T-duality relies on string-theoretic argument and fails near singular fibers. Quantum corrections were expected to modify the moduli space of special Lagrangian branes in an unclear manner. The meaning of ``dual'' in item (a) was unknown, particularly concerning singular fibers in item (b).
Auroux \cite{AuTDual} suggests a Floer-theoretic approach to T-duality with quantum correction, but wall-crossing discontinuity and convergence issues remain. Gross's topological mirror symmetry \cite{Gross_topo_MS} supports the SYZ philosophy for the quintic threefold, but understanding beyond the topological level was limited until the first example in \cite{Yuan_local_SYZ}. Joyce \cite{Joyce_Singularity} demonstrates that the strong form of the SYZ conjecture is not generally correct, as there can be singularity types where the two singular loci of $\pi$ and $f$ do not coincide in $B$.
In response, Joyce \cite{Joyce_book} remarks that \textit{"This does not mean that the SYZ conjecture is false, only that we have not yet found the right statement."}

\subsubsection{T-duality mirror construction: review}

Our approach to a mathematically accurate SYZ conjecture start with the observation of two natural methods, as discussed in \S \ref{ss_integrable_system}, to equip a base manifold with an integral affine structure. 
Let's start with a K\"ahler manifold $X$ and a Lagrangian fibration $\pi: X \to B$. We cover the smooth locus $B_0$ of $\pi$ with small integral affine charts $\chi_i: U_i \to V_i \subset \mathbb R^n$ and artificially select non-archimedean analytic open domains $\trop^{-1}(V_i)$ concerning the tropicalization map (\ref{trop_map_eq}).
Though we know the collection of open domains $\mathrm{Log}^{-1}(V_i) \cong \pi^{-1}(U_i)$ are local pieces of the global Lagrangian torus fibration $\pi_0 := \pi|_{B_0} : X_0 \to B_0$, it remains unclear if the collection $\trop^{-1}(V_i)$ can also be realized as local pieces of a global affinoid torus fibration in the mean time.
The achievement in \cite{Yuan_I_FamilyFloer} is uncovering a unique canonical algorithm to glue local models $\trop^{-1}(V_i)$, stemming from the quantum-correcting holomorphic disks for $\pi_0$ within $X$. 

\begin{figure}
	\centering
		\captionsetup{font=footnotesize}
	\begin{subfigure}{0.25\textwidth}
		\centering
		\includegraphics[scale=0.3]{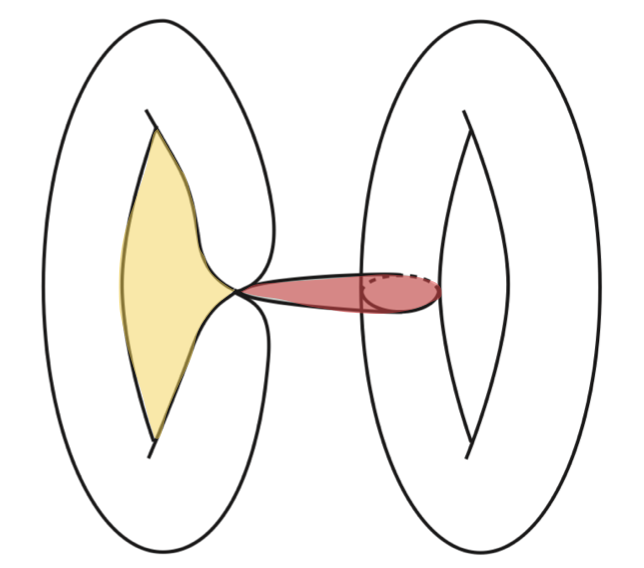}
	\end{subfigure}
	\begin{subfigure}{0.38\textwidth}
		\centering
		\includegraphics[scale=0.28]{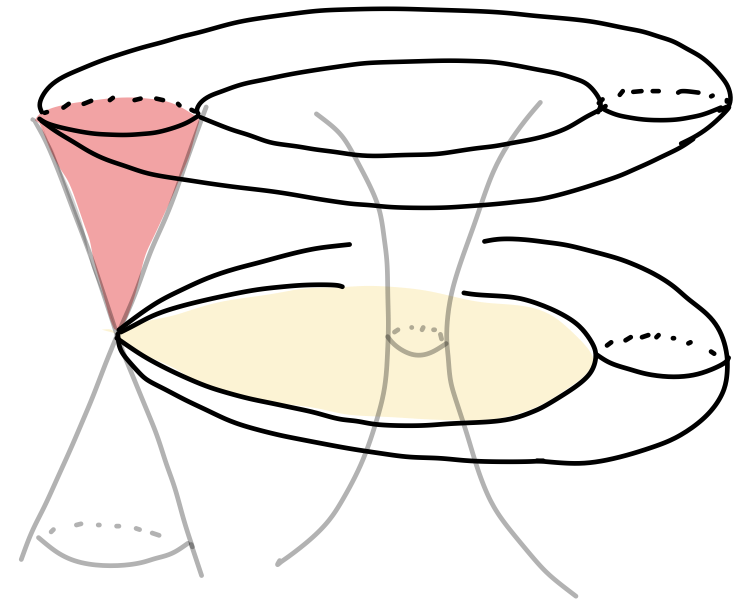}
	\end{subfigure}
	\begin{subfigure}{0.35\textwidth}
		\centering
		\includegraphics[scale=0.39]{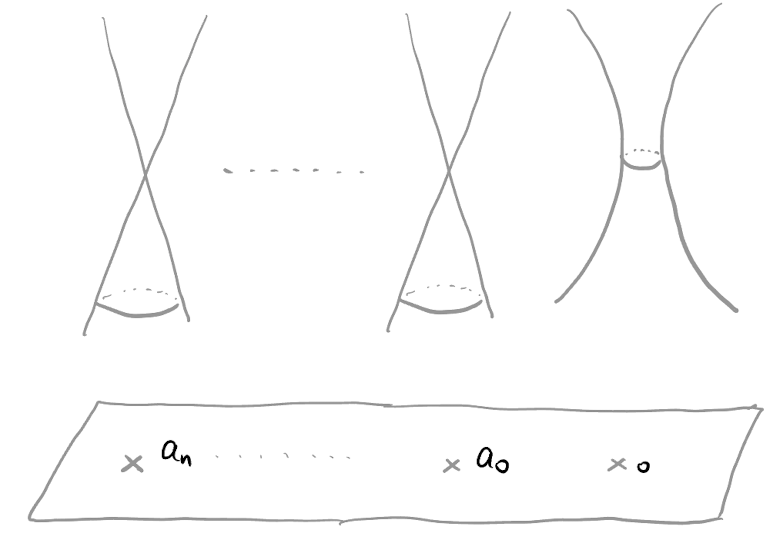}
	\end{subfigure}
	\caption{
		\scriptsize 
		\textit{Left}: General picture. \, \textit{Middle}: the simplest local model in \cite{Yuan_local_SYZ}. \, \textit{Right}: $A_n$ Milnor fiber in our case	}
	\label{figure_area_singular}
\end{figure}

For the given $(X, \pi_0)$, we assume all Lagrangian $\pi$-fibers do not bound nontrivial holomorphic stable disks with negative Maslov index. As per \cite[Lemma 3.1]{AuTDual}, every special or graded Lagrangian satisfies this criterion.
We often require an unobstructedness condition, termed \textit{proper unobstructedness} in \cite{Yuan_unobs}.
This condition holds under a simple sufficient criterion: our Lagrangian fibers of
(\ref{pi_Lagrantian_fibration_eq}) admit the anti-symplectic involution
$
\phi:(u,v,z)\longmapsto (\bar u,\bar v,\bar z)
$ through the complex conjugates.
Specifically, by Solomon's result \cite{Solomon_Involutions}, any potential obstruction from a disk $u$
cancels with its conjugate disk $\phi\circ u$.

The main result of \cite{Yuan_I_FamilyFloer} states the following:

\begin{thm}
	\label{Main_theorem_thesis_thm}
	Given $(X,\pi_0)$ as above, there exists a triple
	$(X_0^\vee,W_0^\vee, \pi_0^\vee)$
	consisting of a non-archimedean analytic space $X_0^\vee$ over $\Lambda$, a global analytic function $W_0^\vee$, and a {\textit{dual affinoid torus fibration}} $\pi_0^\vee: X_0^\vee\to B_0$, which exhibits the following properties:
	
	\begin{enumerate}[(a)]
		\item The non-archimedean analytic structure is unique up to isomorphism.
		\item The integral affine structure on $B_0$ induced by $\pi_0^\vee$ coincides with the one induced by $\pi_0$
		\item The set of closed points in $X_0^\vee$ coincides with 
\[
			\textstyle
			\bigcup_{q\in B_0} H^1(L_q; U_\Lambda) \equiv R^1\pi_{0*}(U_\Lambda)
\]
where $L_q=\pi^{-1}(q)$ and $U_\Lambda=\{ x\in \Lambda\mid |x|=1\}$ is the unit circle in the Novikov field $\Lambda$. Besides, $\pi_0^\vee$ maps every point in $H^1(L_q;U_\Lambda)$ to $q$.
	\end{enumerate}
\end{thm}

\begin{rmk}
	We refer to \cite{Yuan_I_FamilyFloer} for a thorough treatment of the details. However, once the foundational work is established, applying the results becomes relatively straightforward. In practice, understanding the overall framework of the mirror construction and a few properties of it is sufficient. The results can admit quite explicit realizations, as demonstrated in \cite{Yuan_conifold, Yuan_local_SYZ}.
	To realize a concrete example, one still
	needs to make auxiliary choices, such as an atlas of integral affine charts on the base, etc. The
	point is that Theorem \ref{Main_theorem_thesis_thm} ensures that, as long as one follows the family Floer-theoretic
	construction, the mirror can always be constructed regardless of these choices, and any two outcomes
	differ only up to analytic isomorphism.
\end{rmk}

\subsubsection{Local picture in an easy-to-use manner}
\label{sss_local_picture_readytouse}
Following \cite{Yuan_local_SYZ}, we give a brief review as follows.

Let 
$
\chi: (U,q_0) \xrightarrow{\cong} (V,c)\subset \mathbb R^n
$
be a pointed integral affine coordinate chart in $B_0$, meaning that $\chi$ is the restriction of an integral affine chart $\tilde\chi$ with $\tilde\chi(q_0)=c$; for simplicity, we will usually suppress explicit mention of $\tilde\chi$.
Here we further require that $U$ is sufficiently small and $q_0$ is sufficiently close to $U$ so that the reverse isoperimetric inequalities hold uniformly over a neighborhood of $U\cup \{q_0\}$ (see the appendix in \cite{Yuan_I_FamilyFloer}).
Then, we have an identification
\begin{equation}
	\label{affinoid_torus_chart}
	\tau:	(\pi_0^\vee)^{-1}(U) \xrightarrow{\cong} \trop^{-1}(V-c)
\end{equation}
such that $\trop\circ \ \tau=\chi\circ \pi_0^\vee$. Compare the diagram in (\ref{affinoid_torus_fibration_diagram}) for the definition of affinoid torus fibration.
Note that set-theoretically the left side is the disjoint union
\[
(\pi_0^\vee)^{-1}(U)\equiv \bigcup_{q\in U} H^1(L_q; U_\Lambda)  \  .
\]
A closed point $ \mathbf y$ in the dual fiber $H^1(L_q; U_\Lambda)$ can be viewed as a group homomorphism $\pi_1(L_q)\to U_\Lambda$ (or a flat $U_\Lambda$-connection modulo gauge equivalence), so we have the natural pairing 
\begin{equation}
	\label{connection_view}
	\pi_1(L_q)\times H^1(L_q; U_\Lambda)\to U_\Lambda, \qquad  (\alpha, \mathbf y) \mapsto \mathbf y^\alpha
\end{equation}
Write $\chi=(\chi_1,\dots, \chi_n)$, and it gives rise to a family $e_i=e_i(q)$ of $\mathbb Z$-bases of $\pi_1(L_q)$ for all $q\in U$. Alternatively, in view of (\ref{local_system_H_2_H_1_eq}), it induces a frame of the local system $\mathscr R_1$ over $U$.
Then, the corresponding affinoid tropical chart $\tau$ has a very concrete description:
\begin{equation}
	\label{affinoid_torus_explicit_eq}
	\tau(\mathbf y)= (T^{\chi_1(q)} \mathbf y^{e_1(q)},\dots, T^{\chi_n(q)} \mathbf y^{e_n(q)} )
\end{equation}

By Theorem \ref{Main_theorem_thesis_thm}, there is a canonical way to glue all these local affinoid charts. A short review is as follows.
Let's consider two pointed integral affine charts on $B_0$. Taking their intersection if necessary, we may assume they are defined over the same small open subset $U\subset B_0$ and write
\[
\chi_1=(\chi_{11},\dots, \chi_{1n}) : (U, q_1)\to (V_1, c_1) \qquad \text{,}\qquad 
\chi_2=(\chi_{21},\dots, \chi_{2n}) : (U, q_2)\to (V_2, c_2)  \  .
\]
Accordingly, we have two affinoid tropical charts
\[
\tau_1: (\pi_0^\vee)^{-1}(U) \to \trop^{-1}(V_1 -c_1) \qquad \text{,} \qquad
\tau_2: (\pi_0^\vee)^{-1}(U) \to \trop^{-1}(V_2-c_2)
\]
such that $\chi_i\circ \pi_0^\vee=\trop\circ \ \tau_i$.
By Theorem \ref{Main_theorem_thesis_thm}, there exists a unique transition map
\[
\Phi= \trop^{-1}(V_1-c_1)\to \trop^{-1}(V_2-c_2)
\]
determined by the symplectic information of $(X,\pi_0)$ such that the analytic cocycle conditions among all such transition maps hold.
Roughly speaking, the transition map $\Phi$ is determined by two aspects. First, we study the virtual counts of Maslov-0 holomorphic disks (cf. the red disks in Figure \ref{figure_area_singular}) along a Lagrangian isotopy between $L_{q_1}$ and $L_{q_2}$, which is addressed by the bifurcation moduli space
\begin{equation}
	\label{moduli_space_family}
\bigcup_{t\in[0,1]} \{t\} \times \mathcal M (L_{q(t)}; J)
\end{equation}
where $J$ is the (almost) complex structure and the $t\mapsto q(t)$ represents a path from $q_1$ to $q_2$ and where $\mathcal M(L_{q(t)}; J)$ denotes the moduli space of $J$-holomorphic curves $u:(\Sigma, \partial\Sigma)\to (X,L_{q(t)})$ of Maslov index zero.
Second, we use homological perturbation theory (cf.~\cite{FOOO_canonical_model_Morse_complex}) to
collect the contributions of all Maslov-$0$ disks to the curvature term. The delicate point is that,
even if there is only a single Maslov-$0$ disk, one obtains infinitely many such contributions; see
Figure~\ref{figure_hp_maslov_0} and compare the discussion in \S\ref{ss_miscell_Dehn_affinoid_coeff}.

Let's briefly describe $\Phi$ in a coordinate-free way as follows. We consider
\[
\phi:=\tau_2^{-1}\circ \Phi \circ \tau_1:
(\pi_0^\vee)^{-1}(U)\equiv \bigcup_{q\in U} H^1(L_q; U_\Lambda) \to \bigcup_{q\in U} H^1(L_q; U_\Lambda)
\]
and write $\tilde {\mathbf y}=\phi(\mathbf y)$. Be cautious that the both sides of $\phi$ are only presented set-theoretically for clarity, and we have not specify the structure sheaf. Then, using the notation in (\ref{connection_view}), we have
\[
\tilde {\mathbf y}^\alpha = \mathbf y^\alpha  \exp \langle \alpha, \mathfrak F(\mathbf y)\rangle
\]
where
\begin{equation}
	\label{transition_map_F_eq}
	\mathfrak F=   \sum_{\mu(\beta)=0} T^{E(\beta)} Y^{\partial\beta} \mathfrak f_{0,\beta}
\end{equation}
is a vector-valued adic-convergent formal power series in $\Lambda[[\pi_1(L_q)]]\hat\otimes H^1(L_q)$. Here we refer to \textit{some} $A_\infty$ homotopy equivalence 
\[
\mathfrak f=\{\mathfrak f_{k,\beta}: H^*(L_q)^{\otimes k} \to H^*(L_q) \mid k\in\mathbb N, \beta\in\pi_2(X,L_q)\}
\]
derived from the parameterized moduli space (\ref{moduli_space_family}).
It is a collection of multi-linear maps $\mathfrak f_{k,\beta}$ satisfying the $A_\infty$ associativity relations.
It merits mentioning that the individual term $\mathfrak f_{k,\beta}$ takes into account not only the sole moduli space in class $\beta$, but also the assemblage of diverse moduli spaces in classes $\beta_1,\dots, \beta_m$, where $\beta_1+\cdots+\beta_m=\beta$.
Under the assumptions of Theorem \ref{Main_theorem_thesis_thm}, any other such $A_\infty$ homotopy equivalence $\mathfrak f'$ will remarkably produce the exact same analytic map $\Phi$. Simply put, the symmetry related to the involution map renders the error due to different choices negligible.
Though calculating a single term $\mathfrak f_{0,\beta}$ is nearly infeasible, the entire formal power series $\mathfrak F$ is uniquely well-defined and explicitly computable under certain advantageous circumstances (cf. \cite{Yuan_conifold,Yuan_local_SYZ,Yuan_2022disk}).

To sum up, the transition map $\Phi$ is ascertained by the $A_\infty$ homotopy equivalence $\mathfrak f$, which originates from the parameterized moduli space (\ref{moduli_space_family}). Notwithstanding the reliance on selections, any alternative $A_\infty$ homotopy equivalence yields an identical analytical map $\Phi$; see \cite[Section 4.4]{Yuan_I_FamilyFloer}. Moreover, although determining individual terms $\mathfrak f_{0,\beta}$ can be elusive, the formal power series $\mathfrak F$, taken collectively, is uniquely well-defined and explicitly computable, albeit indirectly, in particular propitious situations.

\subsubsection{Dependence on choices}
	\label{ss_family_floer_transition_map}
	
In this section, we provide instructive commentary to elucidate why the dependence on choices is of significant concern.

The moduli space represented in (\ref{moduli_space_family}) is typically characterized by a high degree of singularity, necessitating the selection of perturbations to define virtual counts (refer to \cite{FOOODiskOne, FOOODiskTwo, FOOOKuBook}). A formidable question arises regarding whether the transition map $\Phi$ might be heavily dependent on these various choices. However, it is crucial to note that the cocycle conditions required for local-to-global analytic gluing pertain to `equality' rather than `isomorphism'.
It is worth emphasizing that automorphism groups are studied intensively in mathematics, as they explicitly delineate the difference between isomorphisms and equality. We are not willing to say that every automorphism of a mathematical structure is really just the identity map, since under such a perspective the automorphism group would evaporate.
In our situation, let $V$ be an open subset in $\mathbb R^n$, the automorphism group of $\trop^{-1}(V)$ in the category of analytic spaces is indeed very huge. For instance, for any arbitrary positive real numbers $\epsilon_1,\dots, \epsilon_n$ and polynomials $f_1,\dots, f_n$ whose coefficients have sufficiently positive valuations, the assignment $y_k\mapsto y_k(1+T^{\epsilon_k} f_k)$ is an automorphism on $\trop^{-1}(V)$. This shows the difficulty and complexity of the problem.
	
Fortunately, the issue of choice-dependence is effectively addressed in \cite{Yuan_I_FamilyFloer} through the skillful integration of various concepts. The solution can be broadly divided into two primary aspects.
Firstly, we take advantage of homological perturbation.
Among other reasons for adopting this approach, a crucial requirement lies in the fact that the relevant $A_\infty$ structures must be defined over certain finite-dimensional spaces in order to significantly reduce the severity of choice-dependence.
Secondly, it is essential to focus not on individual virtual count but on the entirety of all the virtual counts (cf. Figure \ref{figure_hp_maslov_0}), consolidating the data as a comprehensive whole so as to produce such $A_\infty$ structures. In our specific situation, the ambiguity arising from different choices is mitigated and absorbed by certain homotopy relations among the $A_\infty$ homomorphisms. Accordingly, it suffices to demonstrate that the non-archimedean analytic map $\Phi$ only relies on this kind of homotopy class of such an $A_\infty$ homomorphism. Notably, this implies that the aforementioned $\Phi$ is unique.
	
The final question to address involves determining the appropriate homotopy theory for the $A_\infty$ structures in symplectic geometry, which ensures both the invariance of $\Phi$ and the cocycle condition among the corresponding transition maps. To achieve this, we must develop a specific \textit{geometric} homotopy theory for $A_\infty$ structures that further incorporates the topological information of $\pi_2(X,L_q)$ and the divisor axiom with respect to the cyclic symmetry of the moduli spaces.
By the divisor axiom, we refer to the open-string analog of the divisor axiom in closed-string Gromov-Witten theory, as outlined by Kontsevich and Manin in \cite{Kontsevich_Manin_1994gromov}. The axiom was initially studied by Seidel \cite[(5.10)]{Seidel2006biased} and later by Auroux, Fukaya, and Tu \cite{AuTDual, FuCyclic, Tu}. However, to the best of our knowledge, the divisor-axiom-preserving homotopy theory of $A_\infty$ structures has only been studied in \cite{Yuan_I_FamilyFloer}, which serves as one of the essential components in ultimately establishing Theorem \ref{Main_theorem_thesis_thm}.
It is worth noting that this innovative homotopy theory also plays a crucial role in \cite{Yuan_c_1} beyond the scope of the SYZ conjecture.

\subsubsection{Void wall-crossing}

Let $B_1\subset B_0$ be a contractible open set.
Let $B_2=\{x\in B_0\mid \mathrm{dist} (x, B_1) < \epsilon\}$ be a slight thickening of $B_1$ in $B_0$. We assume it is also contractible and $\epsilon>0$ is a sufficiently small number so that the estimate constant in the reverse isoperimetric inequalities for any Lagrangian fiber over $B_1$ exceeds $\epsilon$ uniformly (see \cite{Yuan_I_FamilyFloer}).
Then, as \cite[Proposition 4.4]{Yuan_local_SYZ}, we have:

\begin{prop}
	\label{trivial_translation_prop}
	Let $\chi: B_2\xhookrightarrow{} \mathbb R^n$ be an integral affine coordinate chart. If for every $q\in B_1$, the Lagrangian fiber $L_{q}$ bounds no non-constant Maslov index zero holomorphic disk, then there is an affinoid tropical chart $(\pi_0^\vee)^{-1} (B_2) \cong \trop^{-1}(\chi(B_2))$.
\end{prop}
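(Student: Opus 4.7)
The plan is to construct the desired isomorphism by showing that, under the hypothesis, the canonical gluing of local affinoid tropical charts supplied by Theorem \ref{Main_theorem_thesis_thm} degenerates to the tautological identification associated with the single integral affine chart $\chi$ on $B_2$. First, since $B_2$ is contractible and $\chi$ trivializes the local system $\mathscr R_1 = R^1 \pi_{0*}(\mathbb Z)$ globally over $B_2$, we obtain a coherent family of $\mathbb Z$-bases $e_1(q),\dots, e_n(q)$ of $\pi_1(L_q)$ for every $q\in B_2$. Fixing a base point $q_0\in B_1$ with $c_0 = \chi(q_0)$, the explicit formula (\ref{affinoid_torus_explicit_eq}) defines a local affinoid tropical chart
\[
\tau_{q_0}:(\pi_0^\vee)^{-1}(W_{q_0})\to \trop^{-1}(\chi(W_{q_0})-c_0),\qquad \mathbf y\mapsto (T^{\chi_1(q)-c_{0,1}}\mathbf y^{e_1(q)},\dots, T^{\chi_n(q)-c_{0,n}}\mathbf y^{e_n(q)}),
\]
on some small neighborhood $W_{q_0}$. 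Since translations of the target tautologically identify different basepoints, it suffices to show that for any two such local charts the canonical transition map $\Phi$ is simply the translation dictated by $\chi$, i.e.\ that $\mathfrak F\equiv 0$ in the formal power series (\ref{transition_map_F_eq}).

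Next, I would argue that $\mathfrak F\equiv 0$ using the hypothesis together with the structure of the parameterized moduli space (\ref{moduli_space_family}). Given two reference points $q_0, q_1\in B_1$, choose a path $\gamma:[0,1]\to B_1$ connecting them, which is possible by the contractibility of $B_1$. The bifurcation moduli space over $\gamma$ in any Maslov-zero class $\beta\ne 0$ is empty, because every fiber $L_{\gamma(t)}$ bounds no nonconstant Maslov-$0$ disk, and the no-negative-Maslov hypothesis in Theorem \ref{Main_theorem_thesis_thm} forbids Maslov-$0$ pearly trees from being assembled out of components of nonzero individual Maslov index. Consequently every $\mathfrak f_{0,\beta}$ with $\mu(\beta)=0$ and $\beta\ne 0$ vanishes, so $\mathfrak F=0$ and $\Phi$ reduces to the affine translation, as needed.

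It remains to extend this conclusion across the thickening $B_2\setminus B_1$, and this is where the $\epsilon$--condition enters. The uniform reverse isoperimetric inequality over a neighborhood of $B_1\cup\{q\}$ for $q\in B_2$ yields the lower bound $E(\beta)\geqslant C\cdot \mathrm{length}(\partial\beta)$ with $C>\epsilon$, while any Maslov-$0$ class $\beta$ that first appears in the bifurcation moduli for a path connecting a point of $B_1$ to a point of $B_2\setminus B_1$ must have boundary diameter comparable to $\epsilon$. A Gromov compactness argument, combined with the fact that such a $\beta$ would have to persist into the limit as the endpoint approaches $B_1$ (producing a nonconstant Maslov-$0$ disk on some $L_q$ with $q\in B_1$), contradicts the hypothesis. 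Hence no Maslov-$0$ disk class contributes over $B_2$, $\mathfrak F\equiv 0$ throughout, and the local affinoid charts patch unambiguously into a global chart $(\pi_0^\vee)^{-1}(B_2)\xrightarrow{\cong}\trop^{-1}(\chi(B_2))$ as required.

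The main obstacle I expect is the passage from ``no Maslov-$0$ disks over $B_1$'' to ``$\mathfrak F\equiv 0$ even after thickening to $B_2$''. The subtlety is that each $\mathfrak f_{0,\beta}$ is not simply a count of one moduli space but encodes homological-perturbation data (holomorphic pearly trees together with the divisor-axiom-preserving $A_\infty$ homotopies of \S\ref{ss_family_floer_transition_map}), which formally mixes contributions of disks over many fibers along the path. The inequality $C>\epsilon$ is precisely the ingredient that makes Gromov compactness effective across the thickening, ruling out the bubbling of spurious Maslov-$0$ classes and justifying the triviality of every transition map in the canonical gluing.
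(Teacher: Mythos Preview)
Your first two paragraphs capture the core idea correctly and match the paper's approach: cover $B_2$ by local pointed affinoid tropical charts with reference points in $B_1$, and observe that the canonical transition map between two such charts, being determined by the bifurcation moduli along a path in $B_1$ between the two reference points, degenerates to a pure translation because no nonconstant Maslov-$0$ disks appear over $B_1$.

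Your third paragraph, however, reflects a misunderstanding of how the $\epsilon$-condition is used, and the Gromov compactness argument you sketch there is both unnecessary and not well-formed. The point is \emph{not} to extend the ``no Maslov-$0$ disks'' property from $B_1$ to $B_2$. Rather, the $\epsilon$-condition is exactly what allows you to cover $B_2$ by charts $(U_i,q_i)$ of diameter less than $\epsilon$ with all reference points $q_i$ chosen in $B_1$: since every point of $B_2$ lies within distance $\epsilon$ of $B_1$, such a cover exists, and the reverse isoperimetric hypothesis (uniform constant $>\epsilon$) is precisely what guarantees each such pointed chart is a valid affinoid tropical chart in the sense of \S\ref{sss_local_picture_readytouse}. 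Once this is done, the transition maps are determined entirely by paths between the $q_i$'s in $B_1$ --- nothing further happens over $B_2\setminus B_1$, because the charts already extend there and the transition data lives only at the basepoints. In short: drop the third paragraph, and instead explain that the cover of $B_2$ by small charts based at points of $B_1$ is what the $\epsilon$-thickening buys you.
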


\begin{proof}
Since $B_2$ is contractible, we can first single out a fixed pointed integral affine chart $\chi: (B_2,  q_0) \to (V,  c) \subset \mathbb R^n$ for some point $q_0\in B_2$.
	Next, we can cover $B_2$ by pointed integral affine coordinate charts $\chi_i: (U_i, q_i) \to (V_i, v_i)$, $ i\in \mathcal I$. We may require $\chi_i=\chi|_{U_i}$ and the diameters of $U_i$ are less than $\epsilon$. In particular, we may require all $q_i$'s are contained in $B_1$, and there will be no Maslov-0 disks along a Lagrangian isotopy among the fibers between any pair of $q_i$'s inside $B_1$.
	On the other hand, just like (\ref{affinoid_torus_chart}), we have many affinoid tropical charts $\tau_i: (\pi_0^\vee)^{-1}(U_i)\cong \trop^{-1}( V_i-v_i)$.
	Due to the non-existence of the Maslov-0 holomorphic disks, the gluing map $\Phi$ among these tropical charts take the simplest form: $y_k\mapsto T^{c_k}y_k$ for some $c_k\in\mathbb R$. In conclusion, we can get a single affinoid tropical chart by gluing all these $\tau_i$'s.
\end{proof}

\subsubsection{Mirror Landau-Ginzburg superpotentials}
\label{sss_LG_superpotential}
In practice, the various analytic transition maps $\Phi$ are often found explicitly by indirect methods.
One helpful observation is that we can often place the Lagrangian fibration $\pi_0$ in different ambient symplectic manifolds, say $\overline X_1$ and $\overline X_2$, without affecting the Maslov-0 holomorphic disks.
Since the transition maps for the mirror structure only depend on the Maslov-0 disks, as described in (\ref{transition_map_F_eq}), applying Theorem \ref{Main_theorem_thesis_thm} to the two pairs $(\overline X_1,\pi_0)$ and $(\overline X_2, \pi_0)$ yields two tuples $(X_0^\vee, W_i^\vee, \pi_0^\vee)$ possessing the same mirror analytic space $X_0^\vee$ and the same dual affinoid torus fibration $\pi_0^\vee$. The only difference is that they may have different superpotentials $W_i^\vee$ for $i=1,2$, as the latter rely on both Maslov-0 and Maslov-2 disks (cf. Figure \ref{figure_hp_maslov_0}).

Assume $\beta\in \pi_2(X,L_{q_0})$ has Maslov index two, i.e. $\mu(\beta)=2$, and it also induces $\beta \equiv \beta(q)\in \pi_2(X, L_q)$ for any $q$ in a small contractible neighborhood of $q_0$ in $B_0$.
Denote by $\mathsf n_\beta\equiv \mathsf n_{\beta(q)}$ the corresponding \textit{open Gromov-Witten invariant}, the virtual count of holomorphic stable disks in the class $\beta$.
It depends on the base point $q$ and the almost complex structure $J$ in use. For our purpose, unless the Fukaya's trick is applied, we always use the same $J$ in this paper. Then, due to the wall-crossing phenomenon, one may roughly think the numbers $\mathsf n_{\beta(q)}$ will vary dramatically in a discontinuous manner when we move $q$.
Specifically, if $\mathfrak m=\{\mathfrak m_{k,\beta}\}$ is \textit{a} Fukaya's $A_\infty$ algebra on $H^*(L_{q})$ determined by virtual counts of moduli spaces and homological perturbation, then we define $\mathsf n_{\beta}$ to be the number corresponding to $\mathfrak m_{0,\beta}$ under the identification $H^0(L_{q})\cong \mathbb R$.

This observation suggests that understanding the variation in open Gromov-Witten invariants as $q$ moves can provide insight into the behavior of the superpotentials $W_i^\vee$ for different ambient symplectic manifolds. By studying the wall-crossing phenomenon, we can gain a deeper understanding of how the transition maps and mirror structures relate to each other, enabling us to construct the mirror structures more effectively and explicitly.

Now, we describe the superpotential $W^\vee:=W_0^\vee$ in Theorem \ref{Main_theorem_thesis_thm}.
Fix a pointed integral affine chart $\chi:(U,q_0)\to (V,c)$, and pick an affinoid tropical chart $\tau$ that covers $\chi$ as in (\ref{affinoid_torus_chart}).
Then, the local expression of $W^\vee$ with respect to $\tau$ is given by 
\begin{equation}
	\label{superpotential_in_chart_pointwise_eq}
	W^\vee|_\tau : \bigcup_{q\in U} H^1(L_q; U_\Lambda)\to \Lambda,
	\qquad
	\mathbf y\mapsto \sum_{\beta\in \pi_2(X,L_q), \mu(\beta)=2} T^{E(\beta)} \mathbf y^{\partial\beta} \mathsf n_{\beta(q_0)}
\end{equation}
where $\mathbf y\in H^1(L_q; U_\Lambda)$ for any $q\in U$ and we use $\mathsf n_{\beta(q_0)}$ for the fixed $q_0$.

\begin{rmk}
	When there is no nontrivial holomorphic disk of Maslov index zero bounded by $L_{q_0}$, then the number $\mathsf n_{\beta(q_0)}$ is a true invariant. However, when there exists a nontrivial holomorphic disk of Maslov index zero bounded by $L_{q_0}$, then the number $\mathsf n_{\beta(q_0)}$ is \emph{not} an invariant even if we conventionally still call it open Gromov-Witten invariant (cf. \cite{CLL12}). 	
In brief, the issue is the wall-crossing phenomenon \cite{AuTDual}, and the presence of Maslov-0 disks renders it highly sensitive to the choices. Especially, a single Maslov-0 disk can lead to infinitely many different tree diagrams to be considered; see Figure \ref{figure_hp_maslov_0}.
However, fortunately, despite such ambiguities, it follows from Theorem \ref{Main_theorem_thesis_thm} that the mirror Landau-Ginzburg superpotential $W^\vee|_{\tau}$ is well-defined up to affinoid algebra isomorphism. This means that, even though the open Gromov-Witten invariants may be sensitive to choices and not invariant in the presence of Maslov-0 disks, the mirror superpotential remains well-defined and robust. This is an important property that allows us to study mirror symmetry and construct mirror structures, even in cases where the open Gromov-Witten invariants are not straightforwardly invariant.
\end{rmk}

\vspace{-1em}

\begin{rmk}
	{ 
		Compactifying $X$ with an anti-canonical divisor yields a global analytic function, which counts clusters of holomorphic disks intersecting the divisor combined with quantum correction disks contributing to the mirror non-archimedean structure (see Figure \ref{figure_hp_maslov_0}). 
		These functions shape and frame the mirror Berkovich space, and investigating their relations unveils its analytic space structure, even if they are not fully explicit. This may be evidenced in our ongoing work on a duality from negative to positive vertices. In our current case, they are quite explicit yet, and the principle finally leads to (\ref{f_eq_intro}).
	}
\end{rmk}

\begin{figure}
	\centering
		\captionsetup{font=footnotesize}
	\includegraphics[scale=0.35]{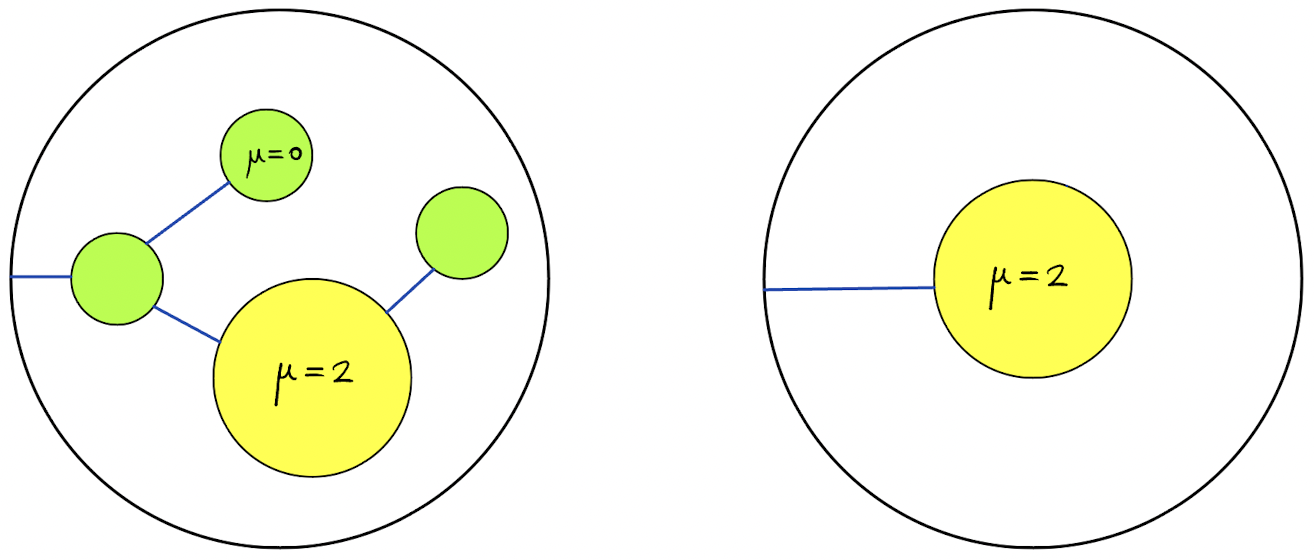}
	\caption{
		\scriptsize A contribution to the Landau-Ginzburg superpotential \textbf{with / without} the inclusion of Maslov-0 disks. It is important to note that even if there is just a single Maslov-0 disk, we must still handle infinitely many tree diagrams.
	}
	\label{figure_hp_maslov_0}
	\vspace{-0.5em}
\end{figure}

In practice, it is more convenient to use `coordinates'. Namely, in view of (\ref{affinoid_torus_chart}), we view $W^\vee|_\tau$ as
\begin{equation}
\label{W_tau_coordinates_eq}
\mathcal W_\tau \equiv W^\vee \circ \tau^{-1}  : \trop^{-1}(V-c)\to (\pi_0^\vee)^{-1}(U)\to \Lambda
\end{equation}
Now, we take two pointed integral affine charts $\chi_a: (U,q_a)\to (V_a, c_a)$ and two corresponding affinoid tropical charts $\tau_a: (\pi_0^\vee)^{-1}(U)\to \trop^{-1}(V_a-c_a)$ for $a=1,2$ as before. Also, let $\Phi$ be the transition map from the chart $\tau_1$ to the $\tau_2$ as before.
Then, due to Theorem \ref{Main_theorem_thesis_thm}, we must have $W^\vee|_{\tau_2} (\phi(\mathbf y)) = W^\vee|_{\tau_1} (\mathbf y)$ for $\phi:=\tau_2^{-1}\circ \Phi\circ \tau_1$; or equivalently, for $y\in \trop^{-1}(V_1-c_1)$, we have
\begin{equation}
	\label{superpotential_gluing_map_eq}
	\mathcal W_{\tau_2}(\Phi(y))=\mathcal W_{\tau_1}(y)
\end{equation}

\subsection{T-duality construction for $A_n$ smoothing}

Let's go back to the example in hand in \S \ref{ss_Lag_fib}.
Since there is no Maslov-0 holomorphic disks in $X$ bounded by $\pi_0$-fibers, applying Theorem \ref{Main_theorem_thesis_thm} to the pair $(X,\pi_0)$ produces a pair $(X_0^\vee, \pi_0^\vee)$.
Besides, as we discussed in \S \ref{sss_LG_superpotential}, we can also apply Theorem \ref{Main_theorem_thesis_thm} to the pair $(\bar X,\pi_0)$, producing an additional analytic function $W^\vee:=W_0^\vee$ on $X_0^\vee$.

Initially, we can use Proposition \ref{trivial_translation_prop} to largely decide the analytic structure of $X_0^\vee$.
Let $U_k$ be the open subsets of $B_0$ defined in (\ref{U_k_eq}).
Then, it follows from Proposition \ref{wall_place_prop} and \ref{trivial_translation_prop}  that we have the following affinoid tropical charts
\[
\tau_k: (\pi_0^\vee)^{-1}(U_k)  \to \trop^{-1}(V_k) \qquad 0\leqslant k\leqslant n+1
\]
that cover the integral affine coordinate charts $\chi_k:U_k\to V_k=\chi_k(U_k)\subset \mathbb R^2$ in (\ref{chi_k_eq}).
Its explicit formula is
\begin{equation}
	\label{tau_k_explicit_eq}
	\tau_k (\mathbf y) = (T^s \mathbf y^{\sigma}, T^{E(\beta_k)} \mathbf y^{\partial\beta_k} )
\end{equation}
Let's write
\begin{equation}
	\label{T_k_eq}
T_k := \trop^{-1}(V_k) \ \subset (\Lambda^*)^2
\end{equation}
for the corresponding analytic open domains.

\subsubsection{Mirror local superpotentials}
We compute the analytic function $W_0^\vee$ explicitly in each chart $\tau_k: (\pi_0^\vee)^{-1}(U_k)\cong T_k$.
Due to Proposition \ref{trivial_translation_prop} and the description (\ref{superpotential_in_chart_pointwise_eq}), it suffices to find $W_0^\vee$ over the fixed point $q_k=(0,r_k)$ in $U_k$ (cf. \S \ref{sss_preferred_disks_beta}) since there is no wall-crossing phenomenon.
In other words, it suffices to find the virtual counts of Maslov-2 holomorphic stable disks (with one boundary marked point) bounded by $L_{q_k}$ up to reparametrization.
Clearly, there is no nontrivial holomorphic sphere in $\bar X$, so it suffices to consider holomorphic disks.
Suppose $\varphi : (\mathbb D, \partial \mathbb D) \to (X,L_{q_k})$ is such a holomorphic disk, and we write
\[
\varphi(\zeta)= (\mathfrak u(\zeta), \mathfrak v(\zeta), \mathfrak z(\zeta) ) \qquad \text{for $\zeta\in\mathbb D$}  \  .
\]
Since it has Maslov index 2, the intersection number $\varphi\cdot \mathscr D=1$, and $\mathfrak z(\zeta)$ has a single zero. Without loss of generality, we may assume $\mathfrak z(\zeta)=r_k\zeta$ up to a reparametrization.
Hence,
\[
\mathfrak u(\zeta) \mathfrak v(\zeta)= h(r_k\zeta)= c (\zeta-a_0)\cdots (\zeta-a_{k-1}) \cdot (1-\bar a_k\zeta) \cdots (1-\bar a_n \zeta)
\]
where we set
\[
a_j=\begin{cases}
	a_j / r_k & \text{if }   0\leqslant j\leqslant k-1 \\
	r_k / \bar a_j & \text{if }  k\leqslant j\leqslant n
\end{cases}
\]
and 
\[
c: =(-1)^{n-k+1}a_k\cdots a_n r_k^k
\]
is a constant. Note that $|a_j|<1$ for all $j$ by (\ref{r_k_eq}).
The zero sets of $\mathfrak u$ and $\mathfrak v$ give a partition of $\{a_0,\dots, a_{k-1}\}$. We can find an index set $I\subset [k]=\{0,1,\dots, k-1\}$ such that the two zero sets are $\{a_i\mid i\in I\}$ and $\{a_i\mid i\in [k]\setminus I\}$.
Now, we write
\[
\mathfrak u(\zeta)= \prod_{i\in I} \frac{\zeta -a_i}{1-\bar a_i \zeta} \cdot  \tilde {\mathfrak u}(\zeta)
\]
and
\[
\mathfrak v(\zeta)= \prod_{i\in [k]\setminus I} \frac{\zeta-a_i}{1-\bar a_i\zeta} \cdot \tilde {\mathfrak v}(\zeta)  \  .
\]
Then,
\[
\tilde {\mathfrak u}(\zeta) \tilde {\mathfrak v}(\zeta)
=
c(1-\bar a_0\zeta) \cdots (1-\bar a_n \zeta)
\]
has no zero. Besides, for $\zeta\in \partial \mathbb D$, we have
\[
|\tilde {\mathfrak u}(\zeta)  |=|\tilde {\mathfrak v } (\zeta)| 
\]
since $|\mathfrak u(\zeta)|=|\mathfrak v(\zeta)|$.
By maximal principle, $\tilde {\mathfrak u}(\zeta)=e^{i\theta} \tilde{\mathfrak v}(\zeta)$ for a fixed $\theta$. Using the $S^1$-action (\ref{S1_action_eq}) if necessary, let's assume $\tilde {\mathfrak u}(\zeta)=\tilde {\mathfrak v}(\zeta)$ for simplicity, and it is exactly the square root of the nonvanishing holomorphic function $\zeta\mapsto c(1-\bar a_0\zeta) \cdots (1-\bar a_n \zeta)$.
This is exactly the holomorphic disk in the class $\beta_{k,I}$ as we constructed in \S \ref{sss_preferred_disks_beta}. By construction, we can also check that its open Gromov-Witten invariant is one.
Therefore, by (\ref{superpotential_in_chart_pointwise_eq}),
\[
W^\vee|_{\tau_k} = \sum_{I\subset [k]} T^{E(\beta_{k,I})} Y^{\partial\beta_{k,I}}  \  .
\]
By using the formulas (\ref{beta_I_back_to_beta}) and (\ref{moment_map_restrict_V_ell_eq}), we further find that
\begin{align*}
	W^\vee|_{\tau_k} &= \sum_{I\subset [k]} T^{E(\beta_k)+|I| s} Y^{\partial\beta_k+ |I| \sigma} \\
	&
	=\sum_{I\subset[k]} T^{E(\beta_k)} Y^{\partial\beta_k} \cdot \big( T^s Y^{\sigma} \big)^{|I|}  \\
	&
	=
	T^{E(\beta_k)}Y^{\partial\beta_k} \sum_{j=0}^k \binom{k}{j} (T^s Y^\sigma)^j \\
	&
	=
	T^{E(\beta_k)} Y^{\partial\beta_k} \left(1+ T^s Y^\sigma \right)^k
\end{align*}
By (\ref{W_tau_coordinates_eq}) and (\ref{tau_k_explicit_eq}), we obtain an analytic function on $T_k$ as follows: for $y=(y_1,y_2)\in T_k$,
\begin{equation}
	\label{W_k_coordinates_eq}
	\mathcal W_k (y) := W^\vee \circ \tau_k^{-1} (y) = y_2(1+y_1)^k  \  .
\end{equation}

\subsubsection{Mirror analytic structure from calculating local superpotentials}

As described in \S \ref{sss_local_picture_readytouse}, 
the two analytic open domains $T_k$ and $T_{k+1}$ are glued along a transition map
\[
\Phi_{k,k+1} : \trop^{-1} (\chi_{k+1}(U_k\cap U_{k+1})) \to \trop^{-1}( \chi_k(U_k\cap U_{k+1}))
\]
where the source and target are analytic subdomains in $T_{k+1}$ and $T_k$ respectively. Note that each of them have two components over $\mathscr N_{k+}$ and $\mathscr N_{k-}$ respectively since $U_k\cap U_{k+1}=\mathscr N_{k+}\sqcup \mathscr N_{k-}$.
We can determine $\Phi_{k,k+1}$ explicitly as follows.
First, concerning the $S^1$-symmetry of the Lagrangian fibration we know the transition map preserves the first coordinate of each $\tau_k$. Besides, the existence of global superpotentials in Theorem \ref{Main_theorem_thesis_thm} implies that the transition map must match the various local superpotentials in the sense that $\mathcal W_{k+1}(y)= \mathcal W_{k}(\Phi_{k,k+1}(y))$ for any $y=(y_1,y_2)$ in the domain of $\Phi_{k,k+1}$; see also (\ref{superpotential_gluing_map_eq}). 
To wit, if we set $(y_1,\tilde y_2)=\Phi_{k,k+1}(y_1,y_2)$, then $y_2(1+y_1)^{k+1}=\tilde y_2 (1+y_1)^{k}$. Since $\val(y_1)\neq 0$ in its domain, $1+y_1$ cannot be zero, and thus $\tilde y_2=y_2(1+y_1)$.
In summary, in its domain, we have
\begin{equation}
	\label{Phi_k_k+1_eq}
	\Phi_{k,k+1}(y_1,y_2) = (y_1, y_2(1+y_1) )
\end{equation}

In view of Theorem \ref{Main_theorem_thesis_thm}, the mirror analytic space $X_0^\vee$ is identified with the adjunction space obtained by gluing all the $T_k$'s via the transition maps $\Phi_{k,k+1}$'s. Namely, it is identified with the disjoint union $\bigsqcup_{k=0}^{n+1} T_k$ modulo the relation $\sim$: we set $y\sim y'$ whenever $y\in T_{k+1}$, $y'\in T_k$, and $\Phi_{k,k+1}(y)=y'$ for some $k$.
Therefore, we have the following explicit identification:
\begin{equation}
	\label{identification_mirror_eq}
	X_0^\vee \equiv \bigsqcup_{k=0}^{n+1} T_k /\sim
\end{equation}
Under this identification, the dual affinoid torus fibration $\pi_0^\vee: X_0^\vee\to B_0$ is also identified with the induced gluing of the various tropicalization maps $\trop : T_k \to V_k $ restricted on $T_k$.
Now, given any $0\leqslant k\leqslant n+1$ and $y=(y_1,y_2)\in T_k$, we have
\begin{equation}
	\label{identification_dual_affinoid_torus_fibration__eq}
	\mathsf v(y_1) = \mathrm{pr}_1 \circ \pi_0^\vee(y)      \qquad \text{and} \qquad \mathsf v(y_2) = \psi( \pi_0^\vee(y)) -k\min\{0,\mathsf v(y_1)\}
\end{equation}
where $\mathrm{pr_1}$ is the projection $\mathbb R^2_{s,r} \to\mathbb R_s$ to the first component and where the second relation holds because of (\ref{chi_k_eq}) and (\ref{psi_U_k}).
Notice that the restriction of $\pi_0^\vee$ on $T_k$ is identified with $\chi_k^{-1}\circ \trop: T_k\to V_k\to U_k$.
Remark that by (\ref{Phi_k_k+1_eq}), the coordinate $y_1\in \Lambda^*$ actually gives rise to a global analytic function on $X_0^\vee$.

\section{Explicit representation of mirror analytic structure}

\subsection{General principles}

The mirror non-archimedean analytic structure in Theorem \ref{Main_theorem_thesis_thm} is generally non-explicit. According to Theorem \ref{Main_theorem_thesis_thm}, there exists an abstract analytic space $X_0^\vee$ over the Novikov field $\Lambda=\mathbb C((T^{\mathbb R}))$, furnished with an affinoid torus fibration $\pi_0^\vee:X_0^\vee \to B_0$. It is unique up to isomorphism and reflects the geometry of the A-side space. This structure is derived from various inherent $A_\infty$ structures, which are generally challenging to explicitly write down.

Consequently, we must accept a certain degree of non-explicitness in general. Indeed, throughout the history of mathematics, numerous non-explicit existence results have emerged, with the Riemann mapping theorem serving as a basic example. Although it may have explicit formulas for a disk or a half-plane, the Riemann mapping is predominantly non-explicit for most simply-connected domains.

\subsubsection{Superpotentials frames the mirror analytic space}
Nonetheless, a major advancement in our foundational groundwork \cite{Yuan_I_FamilyFloer} lies in the incorporation of globally defined mirror superpotentials. As explained in \S \ref{sss_LG_superpotential}, we may consider various compactification spaces, either partial or complete, of the original symplectic manifold, resulting in various global superpotential functions.
The combination of these global functions naturally yields an analytic morphism from the mirror analytic space into the analytification of an affine space. Provided we have sufficient global functions and arrange their combinations appropriately, e.g. ensuring pairwise disjoint zero sets, this analytic morphism becomes an embedding that describes $(X_0^\vee,\pi_0^\vee)$ properly.

Moreover, we have discovered numerous explicit examples in \cite{Yuan_local_SYZ,Yuan_conifold} and in this paper when there are enough symmetries of the Lagrangian fibration to make the superpotentials explicit. Even with fewer symmetries, we may still grasp the information of the mirror analytic space by examining the global superpotentials and their relationships. In an upcoming work, we will demonstrate that dualizing a Lagrangian fibration on the negative vertex variety produces the positive vertex variety. The rationale is that certain partial regions of $X_0^\vee$ where superpotentials can be made explicit are sufficient to determine the mirror analytic structure, even if they cannot be made explicit everywhere. As a matter of fact, the analytic geometry shares a certain degree of rigidity.

\subsubsection{Mirror analytic structure controls singular extension}

An explicit representation generates a deeper understanding of the mirror pair $(X_0^\vee, \pi_0^\vee)$. In our specific example in hand, by examining the quantum correction of the Lagrangian fibration, we produce a moderately explicit model as shown in (\ref{identification_mirror_eq}).
Moreover, as we discussed in the previous paragraph, we will soon see that the framing of superpotentials can be used to derive an explicit embedding from the mirror space $X_0^\vee$ in (\ref{identification_mirror_eq}) into the minimal resolution of the $A_n$ singularity. This embedding enables us to \textit{witness} an explicit model of the dual fibration $\pi_0^\vee$ such that its singular extension over $B=B_0\cup \Delta$ becomes quite straightforward and elementary.

Based on the above discussion, we propose the following principle for the singular extension: \textit{the non-archimedean analytic structure matters a lot}.
 With a singular Lagrangian fibration, the Floer theory of the smooth fibers already captures a substantial amount of information about the singular fibers. Although this may seem counter-intuitive, it is crucial to recognize that symplectic geometry primarily focuses on global properties. The existence of holomorphic disks bounded by smooth fibers is fundamentally global and closely linked to the presence of singular fibers, even if their precise locations remain unknown (see also Figure \ref{figure_area_singular}).

In our specific context, the holomorphic disks lead to a non-archimedean analytic space $X_0^\vee$, as described by Theorem \ref{Main_theorem_thesis_thm}. Compared to the rigidity of holomorphic functions and complex manifolds, a non-archimedean analytic structure is also sufficiently `rigid', limiting the potential freedom of a singular extension. This point of view was vaguely proposed in \cite{Yuan_I_FamilyFloer} and subsequently supported by explicit examples in \cite{Yuan_local_SYZ,Yuan_conifold} and the present paper.

\subsection{Toric geometry for $A_n$ resolution}
\label{ss_toric_geometry}

In our study of toric geometry, we refer to the following standard references \cite{MS_AG_Cox_Katz, Toric_Cox_Little_Schenck}.

\subsubsection{Cones and fans}
We work over the Novikov filed $\Lambda=\mathbb C((T^{\mathbb R}))$. 
Let $N=\mathbb Z^2$ and $M= \mathrm{Hom} (N,\mathbb Z)\cong \mathbb Z^2$ be two lattices dual to each other.
Then, we consider the 2-dimensional fan $\Sigma$ (see Figure \ref{figure_fan}) in $N_{\mathbb R}=N\otimes \mathbb R\cong \mathbb R^2$ generated by the rays 
\[
v_k=(k,1) \in N
\]
for $0\leqslant k\leqslant n+1$.
The toric surface $Y_\Sigma=Y_\Sigma(\Lambda)$ associated to this fan is Calabi-Yau and is known to be a crepant resolution of the $A_n$ singularity.

Let $\sigma_k$ be the cone generated by $v_{k}$ and $v_{k+1}$ for $0\leqslant k\leqslant n$.
In particular, $\sigma_k\cap \sigma_{k+1}=v_{k+1}$.
Now, we set 
\[
m_k=(1,-k)\in M  \  .
\]
Then, $\langle m_k,v_k\rangle =0$, and the dual cone $\sigma_k^\vee$ is generated by $-m_{k+1}$ and $m_{k}$.
We denote the affine toric variety associated to a cone $\sigma$ by
$
Y_{\sigma}= \mathrm{Spec} ( \Lambda[\sigma^\vee \cap M])
$
where $\sigma^\vee$ denotes the dual cone of $\sigma$. We also denote the toric character associated to $m\in M$ by $\chi^m$.

We choose the coordinates of $Y_{\sigma_k}$ to be
\begin{equation}
	\label{toric_geometry_z_k_w_k_eq}
z_k :=\chi^{-m_{k+1}} \qquad \text{and} \qquad w_k:= \chi^{m_{k}} 
\end{equation}
Notice that
$
Y_{\sigma_k} \supseteq (Y_{\sigma_k})_{\chi^{m_k}} = Y_{v_{k+1}} = (Y_{\sigma_{k+1}})_{\chi^{m_k}}  \subseteq  Y_{\sigma_{k+1}}
$.
Here $(Y_{\sigma})_{\chi^m}$ denotes the affine open subset of $Y_\sigma$ in which $\chi^m\neq 0$
(see e.g. \cite[\S 3.1]{Toric_Cox_Little_Schenck}).
Concretely,
\[
Y_{\sigma_k} \equiv \Lambda^2_{z_k,w_k}
\qquad 
\text{and}
\qquad 
Y_{\sigma_k} \supseteq
\Lambda^*_{z_k} \times \Lambda_{w_k}  
\equiv
Y_{v_{k+1}} 
\equiv  
\Lambda_{z_{k+1}} \times  \Lambda^*_{w_{k+1}} \subseteq Y_{\sigma_{k+1}}
\]
with the coordinate relations
$w_{k+1}= z_k^{-1}$ and $z_{k+1}w_{k+1}=z_k w_k=\chi^{(0,1)}$.

By the orbit-cone correspondence, every cone $\sigma$ in $\Sigma$ corresponds to a torus orbit $O(\sigma)$. The affine toric chart $Y_{\sigma_k}\equiv \Lambda^2_{z_k,w_k}$ is the disjoint union of $O(\sigma_k)=(0,0)$, $O(v_k)\cong \{0\}\times \Lambda^*_{w_k}$, $O(v_{k+1})\cong \Lambda^*_{z_{k}}\times \{0\}$, and the open dense orbit $\Lambda^*_{z_k} \times \Lambda^*_{w_k}\equiv \Lambda^*_{z_{k+1}}\times \Lambda^*_{w_{k+1}}$.


\begin{figure}
	\centering
	\captionsetup{font=footnotesize}
	\includegraphics[width=10cm]{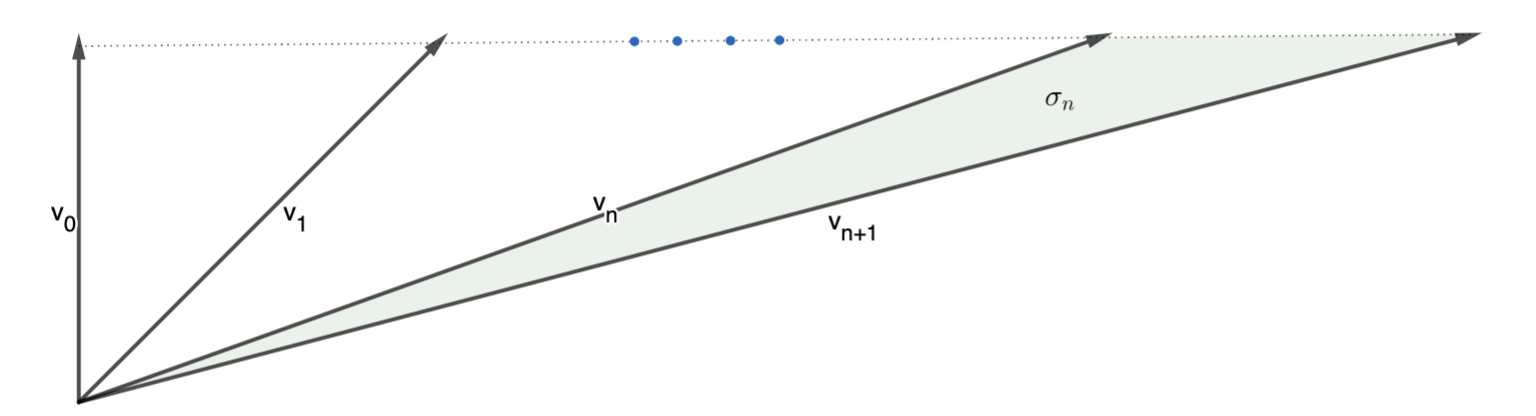}
	\caption{\footnotesize Fan of $A_n$ resolution.}
	\label{figure_fan}
\end{figure}

\subsubsection{Homogeneous coordinates}
\label{sss_homogeneous_coordinates}

Recall that there is a short exact sequence:
\[
0\to M \xrightarrow{V} \mathbb Z^{n+2} \to \mathcal A_{n-1}\to 0
\]
where the first arrow $V$ sends $m$ to the tuple $(\langle m, v_k\rangle: k=0,1,\dots, n+1)$ and $\mathcal A_{n-1}=\mathcal A_{n-1}(Y_\Sigma)$ is the Chow group of Weil divisors in $Y_\Sigma$ modulo the linear equivalence. Since $Y_\Sigma$ is smooth, we know $\mathcal A_{n-1}(Y_\Sigma) \cong Pic(Y_\Sigma)$. 
Notice that $V$ can be regarded as a $(n+2)\times 2$ matrix whose $k+1$-th row is $v_k=(k,1)$ for $0\le k\le n+1$.
We also denote its matrix transpose by $V^T$.

Applying $Hom_{\mathbb Z}( -, \Lambda^*)$ gets another exact sequence
\begin{equation}
	\label{exact_seq_eq}
1\to G\to (\Lambda^*)^{n+2} \xrightarrow{V^T} N\otimes \Lambda^* \to 1
\end{equation}
where the structure of $G$ can be described as a subgroup of $(\Lambda^*)^{n+2}$ (see \cite[Lemma 5.1.1]{Toric_Cox_Little_Schenck}):
\begin{equation}
	\label{G_group_eq}
G
=\ker(V^T)= Hom_{\mathbb Z}( \mathcal A_{n-1}, \Lambda^*)  = \left\{
t=(t_0,t_1,\dots, t_{n+1}) \in (\Lambda^*)^{n+2} \mid  \prod_{k=0}^{n+1} t_k =\prod_{k=0}^{n+1} t_k^k =1
\right\}
\end{equation}

Introduce a variable $x_k\in\Lambda$ for the ray $v_k$ in $\Sigma$ $(0\leqslant k\leqslant n+1)$, and let
\[
S=\Lambda[x_0,x_1,\dots, x_{n+1}]
\]
be the {total coordinate ring} of $Y_\Sigma$ so that we identify $\mathrm{Spec} (S)=\Lambda^{n+2}$.
A subset $C\subset \{v_0,v_1,\dots, v_{n+1}\}$ is called a \textit{primitive collection} if $C$ is not contained in any cone of $\Sigma$ and every proper subset $C'$ of $C$ is contained in some cone of $\Sigma$.
In our case, any primitive collection takes the form
$C_{ij} =\{v_i,v_j\} $ for $j-i\ge 2$.
For each cone $\sigma$, we define the monomial $x(\sigma)=\prod_{j : v_j\notin \sigma} x_j$. In our situation, we have
\[
\hat x_k := x(\sigma_k)=\prod_{j : v_j\notin \sigma_k} x_j =x_0x_1\cdots x_{k-1} x_{k+2}\cdots x_{n+1}=\frac{x_0x_1\cdots x_{n+1}}{x_kx_{k+1}}
\]
for $0\leqslant k\leqslant n$.
The {irrelevant ideal} is
$\mathscr I(\Sigma)=\langle x(\sigma)\mid \sigma\in\Sigma \rangle  =\langle \hat x_k \mid 0\leqslant k \leqslant n \rangle$, and the variety of the irrelevant ideal is a union of irreducible components given by
$
Z(\Sigma) = \bigcup_C  \{x_k=0 \mid v_k\in C\}
$
where the union is over all primitive collections $C$.
Concretely, we have
\begin{equation}
	\label{Z_Sigma_irrelevant}
Z(\Sigma)= \bigcup_{j-i\ge 2, 0\le i,j\le n+1} \{x_i=0, x_j=0\}
\end{equation}

Let $e_0,e_1,\dots, e_{n+1}$ be the standard basis of $\mathbb Z^{n+2}$. For each cone $\sigma$ in $\Sigma$, we define a corresponding cone
$
\tilde \sigma = \mathrm{Cone} ( e_k \mid v_k\in \sigma )
$
in $\mathbb R^{n+2}$.
Then, $\Lambda^{n+2}\setminus Z(\Sigma)$ is the toric variety of the fan $\tilde \Sigma$ generated by these $\tilde \sigma$ (cf. \cite[Proposition 5.1.9]{Toric_Cox_Little_Schenck}).
The integral linear map $\bar\tau: \mathbb Z^{n+2}\to N$ defined by $e_k\mapsto v_k$ is compatible with the fans $\tilde \Sigma$ and $\Sigma$.
The resulting toric morphism
\[
\tau: \Lambda^{n+2}\setminus Z(\Sigma) \to  Y_\Sigma
\]
is constant on $G$-orbits.
It is a geometric quotient and induces an identification (cf. \cite[Theorem 5.1.11]{Toric_Cox_Little_Schenck}):
\begin{equation}
	\label{homogeneous_coordinates_identification}
	 \tau^G: (\Lambda^{n+2} \setminus Z(\Sigma) ) / G  \xrightarrow{\cong} Y_\Sigma 
\end{equation}
Given $p\in Y_\Sigma$, we say a point $x=[x_0:x_1:\cdots :x_{n+1}]\in \tau^{-1}(p)$ gives \textit{homogeneous coordinates} for $p$. This is a generalization of homogeneous coordinates for the projective spaces $\mathbb P^n= (\Bbbk^{n+1}\setminus\{0\})/ \Bbbk^*$.
Because $\tau$ is a geometric quotient, $\tau^{-1}(p)=G\cdot x$ being the $G$-orbit of $x$, and all homogeneous coordinates for $p$ are of the form $t\cdot x$ for $t\in G$. Thus, knowing one of homogeneous coordinates implies all other ones.

We further review the structure of $\tau$ and $\tau^G$ as follows.
The restriction of $\tau$ over an affine toric chart $Y_{\tilde \sigma}$ for a cone $\tilde \sigma$ in the fan $\tilde \Sigma$ gives rise to
a toric morphism 
\[
\tau_\sigma: Y_{\tilde \sigma}\to Y_\sigma  \  .
\]
In particular, choosing the trivial cone retrieves the previous map $V^T: (\Lambda^*)^{n+2}\to N\otimes \Lambda^*$ in (\ref{exact_seq_eq}).
It also induces a geometric quotient and gives the identification
\[
\tau_\sigma^G: Y_{\tilde \sigma} /G \xrightarrow{\cong} Y_\sigma  \  .
\]
Specifically, for the cone $\sigma_k$ in $\Sigma$ and $\tilde{\sigma}_k$ in $\tilde \Sigma$, we note that the coordinate ring of 
\[
Y_{\tilde \sigma_k} \equiv \Lambda^{n+2}\setminus \{ \hat x_k =0\}  \cong \Lambda^2 \times (\Lambda^*)^n
\]
is the localization 
\[
S_{\hat x_k} =\Lambda\left[ \textstyle \prod_{k=0}^{n+1} x_k^{a_k} \mid a_k\ge 0 ,  a_{k+1}\ge 0 \right] = \Lambda[ x_k, x_{k+1}, (x_j^\pm )_{j\neq k,k+1} ]
\]
of $S$ at $\hat x_k$ and that the coordinate ring of $Y_{\sigma_k}$ is $\Lambda[\sigma_k^\vee\cap M]$.
For the coordinates $z_k$ and $w_k$ (\ref{toric_geometry_z_k_w_k_eq}), we also have $Y_{\sigma_k}\equiv \Lambda_{z_k,w_k}^2$.
Then, the morphism $\tau^G_{\sigma_k}$ corresponds to the isomorphism 
\[
(\tau_{\sigma_k}^G)^*: \Lambda[\sigma_k^\vee\cap M] \to S_{\hat x_k}^G
\]
given by $\chi^m\mapsto \prod_{j=0}^{n+1} x_j^{\langle m, v_j \rangle}$. Equivalently, the isomorphism $\tau_\sigma^G$ sends a point $x=[x_0:x_1:\cdots x_{n+1}]$ to $(z_k,w_k)$ with
\begin{equation}
	\label{homogeneous_coordinates_z_k_w_k_eq}
	z_k=\prod_{j=0}^{n+1} x_j^{k+1-j}
	\qquad \text{and} \qquad  w_k=\prod_{j=0}^{n+1} x_j^{j-k}  
\end{equation}
Moreover, by (\ref{toric_degeneration_eq}),
\begin{equation}
	\label{homogeneous_coordinates_1+y+eq}
	1+y=  z_k w_k =\prod_{j=0}^{n+1} x_j
\end{equation}
Observe that for any $t=(t_0, \dots, t_{n+1}) \in G$, substituting $(x_j)$ with $(t_jx_j)$ leaves both Equation (\ref{homogeneous_coordinates_z_k_w_k_eq}) and Equation (\ref{homogeneous_coordinates_1+y+eq}) unchanged. This observation highlights the invariance of these equations under the action of the group $G$.

Conversely, we can describe the affine toric chart $Y_{\sigma_k}$ in terms of homogeneous coordinates as follows (see e.g. \cite[Proposition 5.2.10]{Toric_Cox_Little_Schenck}):
\begin{equation}
	\label{homogeneous_coordiantes_affine_chart}
	\phi_{\sigma_k}: Y_{\sigma_k}\equiv \Lambda^2_{z_k,w_k}  \xhookrightarrow{}  Y_\Sigma \qquad (z_k,w_k) \mapsto [1: \cdots : 1 : x_k=z_k: x_{k+1}=w_k: 1 :\cdots :1]
\end{equation}
under the previous identification (\ref{homogeneous_coordinates_identification}).
Given any homogeneous coordinate $x$ for some $p\in Y_{\sigma_k}$, there exists some $t\in G$ such that the components of $t\cdot x$ are all $1$ except the $k$-th and $(k+1)$-th ones.

\subsubsection{Torus-invariant divisors}
\label{sss_divisor_toric_invariant}
We introduce a global $\Lambda$-variable 
\begin{equation}
	\label{toric_degeneration_eq}
	1+y= \chi^{(0,1)}: Y_\Sigma \to \Lambda
\end{equation}
in $Y_\Sigma$. It comes from the $\mathbb Z$-linear map projecting $N= \mathbb Z^2$ to the second component $\mathbb Z$ compatible with the fan $\Sigma$ and the obvious 1-dimensional fan in $\mathbb Z$ (see \cite[\S 3.3]{Toric_Cox_Little_Schenck}).
Intuitively, $\chi^{(0,1)}=0$ corresponds to a sort of {``toric degeneration''}. In the non-archimedean perspective, this means $\val(\chi^{(0,1)})=\infty$ and is more or less relevant to the notion of tropically continuous map (\S \ref{sss_tropically_continuous_map_defn}).
Beware that we artificially create the variable $y$ and make $\chi^{(0,1)}=1+y$ for our purpose.

Each ray $v_k$ gives a torus-invariant prime divisor 
\begin{equation}
	\label{divisor_torus_invariant_Bside}
	\mathcal D_k :=\overline{O(v_k)} =\bigcup_{\sigma: \text{$v_k$ is a face of $\sigma$}} O(\sigma)
	=
	\begin{cases}
		O(v_0)\sqcup O(\sigma_0)	& \text{if  } k=0 \\
		O(\sigma_{k-1})\sqcup O(v_k) \sqcup O(\sigma_k)	& \text{if  } 1\leqslant k \leqslant n \\
		O(\sigma_n)\sqcup O(v_{n+1})	& \text{if  } k=n+1
	\end{cases}
\end{equation}
We claim that $\mathcal D_0$ and $\mathcal D_{n+1}$ are non-compact and identified with the affine line, while each other $D_k$ for $1\leqslant k\leqslant n$ is isomorphic to the projective line. 
To see this, let $N(v_k)=N/ \mathbb Z v_k$, and we define
\[
\Sigma(v_k)=\{ \bar \sigma \mid  \text{$v_k$ is a face of $\sigma\in \Sigma$}\}
\]
where $\bar \sigma$ is the image cone under the quotient map $N_{\mathbb R} \to N(v_k)_{\mathbb R}=N(v_k)\otimes \mathbb R \cong \mathbb R$.
Then, we just need to apply \cite[Proposition 3.2.7]{Toric_Cox_Little_Schenck}.
Additionally, it is standard to verify that
\begin{equation}
	\label{chi_01=0_eq}
	(\chi^{(0,1)})^{-1}(0)= \textstyle \bigsqcup_{k=0}^n O(\sigma_k) \sqcup \bigsqcup_{k=0}^{n+1} O(v_k)
	=\mathcal D_0\cup \mathcal D_1\cup \cdots \cup \mathcal D_{n+1}
\end{equation}

It is known that the divisor $\mathcal D_k$ is described by the equation $x_k=0$ for the homogeneous coordinates $[x_0:\cdots : x_n]$; see e.g. \cite[Example 5.2.5]{Toric_Cox_Little_Schenck}.
Note that $\chi^{(0,1)}=1+y=0$ precisely means $\prod_{j=0}^{n+1}x_j=0$. 
Moreover, since the homogeneous coordinates avoid the variety $Z(\Sigma)=\bigcup_{j-i\ge 2} \{x_i=x_j=0\}$ of the irrelevant ideal (\ref{Z_Sigma_irrelevant}), the intersection $\mathcal D_i\cap \mathcal D_j$ is empty whenever $j-i\ge 2$. Indeed, for any $0\leqslant k \leqslant n$, $\mathcal D_k\cap\mathcal D_{k+1}=\{x_k=x_{k+1}=0\}=O(\sigma_k)$ consists of a single point.

\subsection{Semi-global and global analytic embedding into $A_n$ resolution}
\label{ss_analytic_embedding_semiglobal_global}

Following \cite{Yuan_local_SYZ,Yuan_conifold}, we aim to find an explicit analytic embedding of $X_0^\vee$ in (\ref{identification_mirror_eq}) into the analytification of an algebraic variety $Y$.
According to the various previous works \cite{Chan_An_Tduality,Chan_Ueda_2013dual,Seidel_Thomas_2001braid,Seidel_Khovanov2002quivers,ishii_Stability_condition_An_ishii2010stability}, we know that the algebraic variety $Y$ is expected to be the \textit{minimal resolution of $A_n$ singularity}.
However, let's pretend that we do not know of this in order to understand how the A-side geometric data reveals the B-side mirror structure and the desired analytic embedding.

The idea is roughly to break the problem into the smaller ones in that the wall-crossing behavior around the singular point $(0,|a_k|)$ of the Lagrangian fibration is basically the same as the basic example of self-mirror space. 

\subsubsection{Semi-global analytic embedding maps $g_k$}

We introduce
\begin{equation}
	\label{Y*_local_affine_toric_chart_eq}
	Y_{\sigma_k}^*:= Y_{\sigma_k}\setminus\{y=0\}
	=\{ zw=1+y \text{ in }  \Lambda^2_{z,w}\times \Lambda^*_y  \}
\end{equation}
Here we omit the subscripts and write $z=z_k$ and $w=w_k$ for clarity.
Recall $y=zw-1$ by (\ref{toric_degeneration_eq}).

Let's put the two adjacent domains $T_{k}$ and $T_{k+1}$ together and imitate the embedding formula in \cite[Section 5.4]{Yuan_local_SYZ} to the analytic open domain 
\[
T_{k,k+1}:=T_{k}\cup T_{k+1} /\sim
\]
in $X_0^\vee$ (\ref{identification_mirror_eq}) for $0\leqslant k\leqslant n$.
Specifically, we define
\begin{align*}
g_k^+ : T_{k+1}  \to  \Lambda^2_{z,w}  \qquad (y_1, y_2) &\mapsto ( y_2^{-1}, y_2(1+y_1) )  \\
g_k^- : T_{k} \ \ \ \to \Lambda^2_{z,w} \qquad (y_1,y_2)&\mapsto ( y_2^{-1}(1+y_1), y_2)
\end{align*}
They are compatible with $\Phi_{k,k+1}$ by (\ref{Phi_k_k+1_eq}) and glue to a \textit{semi-global} analytic embedding
\begin{equation}
	\label{g_k_z_k_w_k_eq}
g_k=(z_k,w_k) : T_{k,k+1}   \to  Y_{\sigma_k}^*
\end{equation}
Here we abuse the notations and still use $z_k$ and $w_k$ as in (\ref{toric_geometry_z_k_w_k_eq}).
For the sake of completeness, we will briefly explain why $g_k$ is injective, which is actually quite elementary to verify (cf. \cite{Yuan_local_SYZ,Yuan_conifold}). 

Suppose $g_k(y)=g_k(y')$, and we aim to show $y = y'$. First, let's assume $y=(y_1,y_2)\in T_{k}$ and $y'=(y_1',y_2')\in T_{k+1}$.
Then, we have $y_1=y_1'$ and $y_2=y'_2(1+y'_1)$.
Next, we set $q=(s,r)=\pi_0^\vee(y)=\chi_{k}^{-1}\circ \trop (y) \in U_{k}$ and $q'=(s', r')=\pi_0^\vee(y')=\chi_{k+1}^{-1}\circ \trop (y')\in U_{k+1}$.
Using (\ref{identification_dual_affinoid_torus_fibration__eq}) implies that $s=s'$, $\val(y_2)=\psi(s,r)-k\min\{0,s\}$, and $\val(y_2')=\psi(s,r')-(k+1)\min\{0,s\}$.
It follows that $\min\{0,s\}+\psi(s,r)-\psi(s,r') =\val(y_2)-\val(y'_2)=\val(1+y_1)\geqslant \min\{0, s\}$ and that $\psi(s,r)\geqslant \psi(s,r')$. By Proposition \ref{increasing_psi_prop}, we have $r\geqslant r'$.
Since most part of $U_{k}$ is `below' $U_{k+1}$ (cf. Figure \ref{figure_milnor_fiber} and (\ref{U_k_eq})), the condition $r\geqslant r'$ can happen only if $q$ and $q'$ are contained in $U_{k}\cap U_{k+1}=\mathscr N_{k+}\cup\mathscr N_{k-}$. In particular, $s\neq 0$ and $y,y'$ are in the same domain $T_{k}$ or $T_{k+1}$ via the gluing. Since $g_k^+$ and $g_k^-$ are clearly injective, we can finally check that $y=y'$.

Keeping in mind that our goal is to glue the various $g_k$ into a global embedding, we proceed by comparing $g_k$ with $g_{k+1}$.
The overlap of their domains $T_{k,k+1}$ and $T_{k+1,k+2}$ is precisely $T_{k+1}$. Recall that $Y_{v_{k+1}}\equiv \Lambda_{z_k}^*\times \Lambda_{w_k}\equiv \Lambda_{z_{k+1}}\times \Lambda_{w_{k+1}}^*$, and we introduce
\begin{equation}
	\label{Y*_overlap_v_k+1}
Y^*_{v_{k+1}} = Y_{v_{k+1}}\setminus\{y=0\}
\end{equation}
By (\ref{g_k_z_k_w_k_eq}), a useful observation is that the image of $g_k|_{T_{k+1}}\equiv g_k^+$ is contained in $Y^*_{v_{k+1}}$ and that the image of $g_{k+1}|_{T_{k+1}}\equiv g_{k+1}^-$ is contained in $Y^*_{v_{k+1}}$ as well.
By (\ref{g_k_z_k_w_k_eq}), we also know that $z_k|_{T_{k+1}}=\frac{1}{y_2}$, $w_k|_{T_{k+1}}=y_2(1+y_1)$, $z_{k+1}|_{T_{k+1}}=\frac{1+y_1}{y_2}$, and $w_{k+1}|_{T_{k+1}}=y_2$ for any $(y_1,y_2)\in T_{k+1}$.
Accordingly, they are subject to the following relation:
\begin{equation}
	\label{toric_gluing_relation}
	w_{k+1}=z_k^{-1} \qquad \text{and} \qquad z_{k+1}w_{k+1}=z_kw_k=1+y_1
\end{equation}
This is exactly the gluing relation in the toric surface $Y_\Sigma$.

\subsubsection{Global analytic embedding map $g$}

Our final goal in this subsection is to show that the various $g_k: T_{k,k+1}\to Y_{\sigma_k}^*$ in (\ref{g_k_z_k_w_k_eq}) glue to a global analytic embedding map
	\begin{equation}
		\label{g_embed_map_eq}
		g: X_0^\vee \to  (Y_\Sigma^*)^{\mathrm{an}}
	\end{equation}
into the analytification of the variety
\begin{equation}
	\label{Y*_Sigma_global_eq}
	Y_\Sigma^*=Y_\Sigma \setminus\{y=0\}
\end{equation}
Remark that the $Y_\Sigma^*$ is also the union of $Y_{\sigma_k}^*$'s in (\ref{Y*_local_affine_toric_chart_eq}) along the overlaps $Y_{v_{k+1}}^*$'s in (\ref{Y*_overlap_v_k+1}).

\begin{convention}
	From now on, we will not always distinguish $Y_\Sigma^{\mathrm{an}}$ from $Y_\Sigma$, and $(Y_\Sigma^*)^{\mathrm{an}}$ from $Y_\Sigma^*$. We often use them interchangeably if the context is clear.
Additionally, we will usually call the various $g_k$'s \textit{semi-global} analytic embeddings and call the $g$ a \textit{global} analytic embedding.
\end{convention}

Confirming (\ref{g_embed_map_eq}) is a straightforward process. Indeed, we first observe that, based on the previous discussion, it is evident that setting $g|_{T_{k,k+1}}=g_k$ results in a well-defined analytic morphism $g$. Additionally, this morphism is locally an isomorphism onto its image.

It remains to show that the map $g$ defined in this way is injective.
Namely, suppose $\pmb x=g(y)=g(y')$ for some $y,y'\in X_0^\vee$; we aim to show that $y=y'$.
	Under the identifications (\ref{identification_mirror_eq}) and (\ref{identification_dual_affinoid_torus_fibration__eq}), we may assume $y=(y_1,y_2)\in T_k$ and $y' =(y'_1,y'_2) \in T_\ell$ for $0\leqslant k\leqslant  \ell \leqslant n+1$, and we set $q=(s,r)=\pi_0^\vee(y)=\chi_k^{-1}\circ \trop (y)\in U_k$ and $q'=(s',r') = \pi_0^\vee(y') = \chi_\ell^{-1}\circ \trop(y') \in U_\ell$.
	Since $1+y_1=z_k(y)w_k(y)=z_\ell(y')w_\ell(y')=1+y_1'$ (cf. (\ref{toric_gluing_relation})), we know $y_1=y_1'$.
	By (\ref{identification_dual_affinoid_torus_fibration__eq}), we know $s=s'$, \[
	\val(y_2)=\psi(s,r)-k\min\{0,s\}, \quad \text{and} \quad \val(y'_2)=\psi(s, r') -\ell \min\{0,s\} \  .  \]
	
	 If $\ell-k=0$ or $1$, there is nothing new compared to (\ref{g_k_z_k_w_k_eq}). If $\ell-k\geqslant 2$, the argument is similar. Indeed, $\sigma_k\cap \sigma_\ell$ is then the trivial cone that consists of the single point. Then, $\pmb x$ is contained in $Y_{\sigma_k}\cap Y_{\sigma_\ell}\equiv Y_{\sigma_k\cap \sigma_\ell}\cong (\Lambda^*)^2$, the open dense complex torus in $Y_\Sigma$. 
		By (\ref{toric_gluing_relation}), we may use the coordinate system $(z_k, w_k)$ uniformly. First, since $y\in T_k$, we clearly have $z_k(y)=y_2^{-1}$ and $w_k(y)=y_2(1+y_1)$.
		Second, for $y'\in T_\ell$, we can still use (\ref{g_k_z_k_w_k_eq}) and (\ref{toric_gluing_relation}) to compute $z_k(y') =(1+y_1)^{k-\ell} z_\ell(y')=(1+y_1)^{k-\ell} (y_2')^{-1}$ and $w_k(y')=(1+y_1)^{\ell-k} w_\ell(y')= (1+y_1)^{\ell-k+1} y_2'$.
		Now, the condition $g(y)=g(y')$ means that $z_k(y)=z_k(y')$ and $w_k(y)=w_k(y')$. Thus, $y_2=(1+y_1)^{\ell-k} y_2'$ and $\val(y_2) =(\ell-k) \val(1+y_1) + \val(y_2')$. It follows that
		\[
		\psi(s,r)-\psi(s,r')=(\ell-k) \left(\val(1+y_1) -\min\{0,s\} \right)  \geqslant 0  \  .
		\]
		From Proposition \ref{increasing_psi_prop} it follows that $r\geqslant r'$.
		However, $\ell-k\geqslant 2$ implies that $U_k$ and $U_\ell$ are disjoint and particularly $r'>r$ (cf. (\ref{U_k_eq}) and Figure \ref{figure_milnor_fiber}). This is a contradiction. Hence, $g$ is indeed injective and we have justified (\ref{g_embed_map_eq}).

\section{Explicit representation of mirror dual fibration}

\subsection{Strategy}
\label{ss_outline_F_k_F}

\subsubsection{Brief purpose}
Our objective is to \textit{explicitly} construct a tropically continuous map from the image of $g$ in $Y_\Sigma^*$ to $B$, such that the singular locus precisely coincides with $\Delta$. As a somewhat vague analogy, we note that one can embed a manifold into a Euclidean space to make it explicit, but there are typically multiple ways to do so. We encounter a similar situation here as we use explicit choices to create an explicit representation of $\pi_0^\vee$. Naturally, such an explicit model may not be unique. The main advantage of making everything explicit is that it allows us to clearly \textit{observe} the appearance of singular mirror fibers.

Since $g$ is obtained by gluing the analytic embedding maps $g_k$'s in the previous section in (\ref{g_embed_map_eq}), we begin with these corresponding semi-global pieces.
Inspired by the local SYZ model in \cite{Yuan_local_SYZ}, we aim to realize a commutative diagram in the following form:
\[
\xymatrix{
	T_{k,k+1} \ar[rr]^{g_k}\ar[d]_{\pi_0^\vee} & & Y_{\sigma_k}^*  \ar[d]_{F_k} \\
	U_{k,k+1} \ar[rr]^{j_k} & & \mathbb R^N
}
\]
where $U_{k,k+1} := U_{k}\cup U_{k+1} \subseteq B_0$, the $F_k$ is some tropically continuous map (see \S \ref{sss_tropically_continuous_map_defn} for the definition), the $j_k$ is a topological embedding, and the $N$ is some large integer.
By (\ref{toric_gluing_relation}), the $y$ in (\ref{toric_degeneration_eq}) or (\ref{Y*_local_affine_toric_chart_eq}) agrees with $y_1$ in the coordinates of $T_{k,k+1}$ and actually defines a non-vanishing global function on $Y_\Sigma^*$

Furthermore, 
we recall that $X_0^\vee$ is the union of $T_{k,k+1}$'s by (\ref{identification_mirror_eq}). Then, we would like to glue the semi-global choices of $(j_k,F_k)$ to obtain a global topological embedding $j$ and a tropically continuous map $F$ that fit into the following commutative diagram:
\[
\xymatrix{
	X_0^\vee \ar[rr]^g \ar[d]_{\pi_0^\vee} & & Y_\Sigma^* \ar[d]_{F} \\
	B_0 \ar[rr]^{j} & & \mathbb R^N 
}
\]

This is an outline of what we want to do in the next.
All of $j_k$, $F_k$, $j$, $F$, and $N$ are to be determined, and an appropriate choice of them gives an explicit representation of $\pi_0^\vee$.
Specifically, we aim to carefully design $F_k$'s such that $F_k\circ g_k|_{T_{k+1}}=F_{k+1}\circ g_{k+1}|_{T_{k+1}}$.

\subsubsection{Motivation and thought process}
\label{sss_thought_process}
Our aim is to provide a glimpse into the thought process behind the discovery of the desired formula. However, if the reader prefers not to delve into the underlying thought process, they may directly refer to \S \ref{ss_order_statistics} and \S \ref{ss_tropically_continuous_map_explicit} and accept the formulae as presented, even though they might seem somewhat unmotivated.

We begin our exploration with several preliminary attempts, followed by the introduction of various ideas for crucial modifications.
It is worth noting that an affine toric chart $Y_{\sigma_k}^*$ closely resembles a local model found in \cite[Section 5.1]{Yuan_local_SYZ}.
Additionally, drawing inspiration from (\ref{identification_dual_affinoid_torus_fibration__eq}), we undertake an initial naive attempt at constructing the following:
\begin{equation*}
	F_k^{naive}=( F^{naive}_{0k}, F^{naive}_{1k}) :  Y_{\sigma_k}^*  \to \mathbb R^2
\end{equation*}
where we set
\begin{align*}
	F^{naive}_{0k}(z_k,w_k,y) &= \min \Big\{ \ \val(z_k)-(k+1)\min\{0,\val(y)\} \  ,   \   -\psi (\val(y), |a_k|)  \Big \}   \\
	F^{naive}_{1k}(z_k,w_k,y) &= \min \Big \{ \  \val(w_k)+k\min\{0,\val(y)\}  \   ,  \  \qquad  \psi ( \val(y),  |a_k|)\} \  \Big\} 
\end{align*}

Recall that $Y_{\sigma_k}^*=Y_{\sigma_k}\setminus\{y=0\}$ is identified with $z_kw_k=1+y$ in $\Lambda^2_{z,w}\times \Lambda_y^*$ by (\ref{Y*_local_affine_toric_chart_eq}).
Here requiring $y\neq 0$ ensures that $\val(y)\neq \infty$, so the term $\psi(\val(y),|a_k|)$ makes sense.

In view of (\ref{identification_dual_affinoid_torus_fibration__eq}), the previous two terms $k\min\{0,\val(y) \}$ and $(k+1)\min\{0,\val(y)\}$ serve to normalize the non-archimedean valuations, aligning them with the function $\psi$ in (\ref{psi_over_B_eq}) that reflects the reduced K\"ahler geometry on the A-side.
Keep in mind that $g_k$ maps into $Y_{\sigma_k}^*$, and our aim is to modify $F_k^{naive}$ so that it is somewhat supported on the image of $g_k$.

To successfully assemble the various $F_k^{naive}$'s, we must introduce additional modifications due to the rigidity of the analytic structure.
Our subsequent attempt involves constructing:
\begin{equation}
	\label{F_k_pre}
	\begin{aligned}
		F_{0k}^{pre}(z_k,w_k,y) &= \Big\{ \val(z_k)-(k+1) \min\{0,\val(y)\},  \quad  \{-\psi(\val(y),|a_j|): 0\leqslant j \leqslant n\}  \Big\}_{[n-k]}  \\
		F_{1k}^{pre}(z_k,w_k,y) &=  \Big\{  \val(w_k)+ k \min\{0,\val(y)\},  \qquad  \{\psi(\val(y),|a_j|): 0\leqslant j \leqslant n\}  \Big\}_{[k]} 
	\end{aligned}
\end{equation}
where we use the notion of order statistics in \S \ref{ss_order_statistics}.
The idea is that the order statistics limit the range of the valuations and render them constant outside the range. In simpler terms, each order statistic is a piecewise linear function locally modeled on $\min$ / $\max$ (cf. Figure \ref{figure_order_statistic}), and the above $F_k^{pre}$ effectively recovers the previous $F_k^{naive}$ in certain local regions. Intuitively, this should be related to the non-archimedean version of the partition of unity, as investigated by Chambert-Loir and Ducros in \cite{Formes_Chambert_2012}.
For instance, let's consider the affine line with variable $x$; there is a `bump function' defined by $x\mapsto \mathrm{median}\{\val(x),0,1\}\equiv \{\val(x),0,1\}_{[1]}$ such that it is equal to $0$ in the region $\{\val(x)\le 0\}$ and equal to $1$ in the region $\{\val(x)\ge 1\}$.
The assignment $(F^{pre}_{0k},F^{pre}_{1k})$ provides a tropically continuous map from $Y_{\sigma_k}^*$ to $\mathbb R^2$. Nonetheless, we still need to extend its domain and perform further modifications if necessary.

On the other hand, when comparing $(F_{0k}^{pre}, F_{1k}^{pre})$ to $(F_{0,k+1}^{pre}, F_{1,k+1}^{pre})$, it becomes evident that their relationship closely resembles the `tropicalization' of the toric gluing relation (\ref{toric_gluing_relation}). The toric geometry over a non-archimedean field offers the advantage of allowing for tropicalization methods (see \cite{tropical_introduction,tropical_Mikhalkin_Rau}). One of the simplest toric varieties is the affine line $\mathbb A^1_\Lambda\equiv \Lambda$, with its tropicalization corresponding to $\overline {\mathbb R} :=\mathbb R\cup \{\infty\} =\{\val(x)\mid x\in \Lambda\}$. Similarly, the tropicalization of $\Lambda^*$ corresponds to $\mathbb R$.
Expanding on this concept, we can associate a tropical toric variety with a fan in the same manner as in toric geometry. However, instead of using $\Lambda$ and $\Lambda^*$, we substitute them with $\overline {\mathbb R}$ and $\mathbb R$, respectively.

Let $\Sigma$ be the smooth fan we previously considered in \S \ref{ss_toric_geometry}.
Then we can define the \textit{tropical smooth toric variety} $Y_\Sigma^{\trop}$ as follows.
We associate to each cone $\sigma$ the space 
$Y_{\sigma}^{\trop} = \mathrm{Hom} (\sigma^\vee \cap M , \overline {\mathbb R})$
of semigroup homomorphisms.
We can similarly identify 
$
Y_{\sigma_k}^{\trop} \equiv  \overline {\mathbb R}_{b_k, c_k}^2 $
with the `tropicalized' toric gluing relations
$c_{k+1}=-b_k$ and $b_{k+1}+c_{k+1}=b_k+c_k $.
In our case, using the property (\ref{order_statistic_reverse_sign}) of the order statistic yields that $F_{0k}^{pre}$ mostly coincides with $-F_{1,k+1}^{pre}$, reflecting the first relation, whereas the second toric gluing relation appears to be somewhat distorted.
The homogeneous coordinates for toric geometry also tropicalizes naturally.
Similar to (\ref{homogeneous_coordinates_z_k_w_k_eq}), the coordinates $(b_k,c_k)$ on $Y^\trop_{\sigma_k}\equiv \overline{\mathbb R}^2$ can be transferred to homogeneous ones, say $[r_0:\cdots r_{n+1}]$, by
$b_k=\sum_{j=0}^{n+1} (k+1-j) r_j $ and $c_k=\sum_{j=0}^{n+1} (j-k) r_j$.
However, the relation (\ref{homogeneous_coordinates_1+y+eq}) is also distorted and cannot be tropicalized here.

\subsection{Order statistics}
\label{ss_order_statistics}
We introduce the notion of order statistics which serves as the generalizations of minimum and maximum values.

Fix two integers $1\leqslant d\leqslant m$.
The $d$-th {\textit{order statistic}} of a sample of $m$ real numbers $x_1,\dots, x_m$ is equal to its $d$-th smallest value (cf. Figure \ref{figure_order_statistic}).
For instance, the first order statistic is the minimum of the sample, that is, $\min\{x_1,\dots, x_m\}$. Similarly, the $m$-th order statistic is the maximum of the sample, that is, $\max\{x_1,\dots, x_m\}$.
Given $n+1$ real numbers $x_0, x_1,\dots, x_n$ and any $0\leqslant k\leqslant n$, we denote the \textbf{$(k+1)$-th} order statistic by (not the $k$-th one; apologies for the potential confusions in notation, but it makes many formulae cleaner):
\[
\{x_0,\dots, x_n\}_{[k]}
\]

Going back to our context, we abbreviate
\begin{equation}
	\label{psi_k_abbrev_notation}
	\psi_k=\psi_k(s)=\psi (s,|a_k|)
\end{equation}
Due to Proposition \ref{increasing_psi_prop}, they are subject to the condition
$
\psi_0< \psi_1 <\cdots < \psi_n
$. They give rise to a partition of the real line into $n+2$ intervals $[-\infty,\psi_0], [\psi_0,\psi_1],\dots, [\psi_{n-1},\psi_n],$ and $[\psi_n,+\infty]$.

For any $c\in\mathbb R\cup \{\pm\infty\}$, it is straightforward to check the following properties:
\begin{equation}
	\label{order_statistics_computation}
\begin{aligned}
\{c,\psi_0,\psi_1,\dots, \psi_n\}_{[0]}
&
=
\begin{cases}
	c & \text{if  }   c\in[-\infty , \psi_0] \\
	\psi_0 & \text{if  }  c\in [\psi_0 , +\infty ] \\
\end{cases} \\
\{c,\psi_0,\psi_1,\dots, \psi_n\}_{[k]}
&
=
\begin{cases}
	\psi_{k-1} & \text{if  }   c\in [-\infty , \psi_{k-1}] \\
	c & \text{if  }   c\in [\psi_{k-1} , \psi_k] \\
	\psi_k & \text{if  }  c\in[\psi_k  , +\infty] \\
\end{cases}
\qquad (1 \leqslant k \leqslant n) 
\\
\{c,\psi_0,\psi_1,\dots, \psi_n\}_{[n+1]}
&
=
\begin{cases}
	\psi_{n} & \text{if  }   c\in [-\infty , \psi_{n}] \\
	c & \text{if  }   c\in [\psi_{n} , +\infty] 
\end{cases}
\end{aligned} 
\end{equation}

Moreover, it is also direct to verify that for $0\leqslant k \leqslant n+1$,
\begin{equation}
	\label{order_statistic_reverse_sign}
\{ -c,-\psi_0,-\psi_1,\dots, -\psi_n\}_{[n+1-k]}=\{c,\psi_0,\psi_1,\dots, \psi_n\}_{[k]}
\end{equation}


We introduce
\begin{equation}
	\label{gamma_k}
	\begin{aligned}
		\gamma_k(s,c)
		&=	
		\{c, \psi_0,\psi_1,\dots, \psi_n \}_{[k]}   \qquad (0\leqslant k \leqslant n+1)
	\end{aligned}
\end{equation}
See Figure \ref{figure_order_statistic} for an illustration of the graphs of $\gamma_k$'s.

\begin{rmk}
	The motivation behind the construction of $\gamma_k(s,c)$ will become clear once we delve into the study of tropically continuous maps later. In fact, during the process of finding explicit solutions, we initially construct tropically continuous maps and ensure their compatibility. The lines $\gamma_k(s,c)$ are discovered along the way. However, to provide a clearer presentation in the paper, we first present the formulas for $\gamma_k(s,c)$ without elaborating on the reasons behind these specific formulas. The justification and intuition for these constructions will be revealed in subsequent sections as we further explore the properties and compatibility of the tropically continuous maps.
\end{rmk}

Finally, we set
\begin{equation}
	\label{pmb_gamma_eq}
	\pmb \gamma: \mathbb R^2\to\mathbb R^{n+3} ,\qquad
	\pmb \gamma(s,c) = (\gamma_0(s,c) ,\gamma_1(s,c) ,\dots, \gamma_{n+1}(s,c) , \  s )
\end{equation}

By construction, $\pmb \gamma$ gives a homeomorphism from $\mathbb R^2$ onto its image. Namely, $\gamma$ defines an embedded surface in $\mathbb R^{n+3}$.
In reality, the above computation also suggests that for a fixed $s$, the curve $c\mapsto \pmb\gamma(s,c)$ is a broken line in $\mathbb R^{n+2}\times \{s\}$ with $n+2$ line segments corresponding to the $n+2$ intervals $(-\infty,\psi_0], [\psi_0,\psi_1],\dots, [\psi_{n-1},\psi_n], [\psi_n,+\infty)$ in the $c$-domain. Namely,
\begin{equation}
	\label{line_segments}
\pmb \gamma(s,c)
=
\begin{cases}
(c,\psi_0,\psi_1,\dots, \psi_n  \ , \  s)   & \text{if  } c\in (-\infty , \psi_0] \\
(\psi_0,\dots, \psi_{k-1}, c, \psi_k, \dots, \psi_n  \ , \ s ) & \text{if  } c\in [\psi_{k-1},\psi_k]   \qquad  (1\leqslant k\leqslant n) \\
(\psi_0, \psi_1,\dots, \psi_n , c  \ , \ s) & \text{if  } c\in [\psi_n,+\infty)
\end{cases}
\end{equation}
The $n+1$ corner points at $c=\psi_0, \psi_1, \dots, \psi_n$, where the $n+2$ line segments meet, are respectively provided by:
\begin{align}
	\label{corner_point_gamma}
A_{k}=A_{k}(s)=&\Big(
\psi_0, \psi_1,\dots, \psi_{k-1}, \psi_k, \psi_k , \psi_{k+1} \dots, \psi_n 
\Big) &&  0\leqslant k \leqslant n 
\end{align}

Denote by the $n+2$ line segments of $c\mapsto \pmb \gamma(s,c)$ by $\pmb\gamma_-(s,c),\pmb \gamma_{0,1}(s,c),\dots, \pmb\gamma_{n-1,n}(s,c),\pmb\gamma_+(s,c)$ respectively. In other words, given a fixed $s$, we assume

\begin{itemize}
\item $\pmb\gamma_-$ ($c\leqslant \psi_0$) is the first line segment that shots into $A_0$.
	\item  $\pmb\gamma_{k,k+1}$ ($\psi_k \leqslant c\leqslant \psi_{k+1}$) is the $(k+1)$-th line segment connecting $A_k$ and $A_{k+1}$ for $0\leqslant k \leqslant n-1$.
\item $\pmb\gamma_+$ ($c\geqslant \psi_n$) is the last line segment that emanates from $A_n$.
\end{itemize}

\subsection{Explicit tropically continuous maps}
\label{ss_tropically_continuous_map_explicit}

\subsubsection{Tropically continuous map in homogeneous coordinates}
\label{sss_trop_cont}
After revisiting our thought process in \S \ref{sss_thought_process} and undergoing numerous iterations, we now present the final construction as follows:

First, we observe that the tropicalization of the homogeneous coordinates' equations in (\ref{homogeneous_coordinates_z_k_w_k_eq}) results in:
\begin{equation}
	\label{val_z_k_w_k_homogeneous}
	\val(z_k)=\sum_{j=0}^{n+1} (k+1-j) \val(x_j) \qquad \text{and} \qquad 
	\val(w_k)= \sum_{j=0}^{n+1} (j-k) \val(x_j)
\end{equation}
Based on (\ref{homogeneous_coordinates_1+y+eq}), the following relationship can be also deduced:
\begin{equation}
	\label{val_1+y_homogeneous}
	\val(1+y)= \sum_{j=0}^{n+1} \val(x_j)
\end{equation}
It is instructive to notice that for any $0\leqslant k \leqslant n$, 
\[
-\val(z_k) + (k+1)\val(1+y)=\val(w_k)+k\val(1+y) =\sum_{j=0}^{n+1} \ j\cdot \val(x_j)  \  .
\]

Here we utilize the homogeneous coordinates of $Y_\Sigma$ since we would like a more global treatment.
By (\ref{homogeneous_coordinates_identification}) and (\ref{Y*_Sigma_global_eq}), we can identify
$
Y^*_{\Sigma} \equiv Y_\Sigma\setminus\{y=0\}
$
with the subvariety in 
\begin{equation}
	\label{homogeneous_coordinates_X_y_eq}
Y_\Sigma\times \Lambda^*_y\equiv \big(\Lambda^{n+2}\setminus Z(\Sigma)\big)/ G \times \Lambda_y^*
\end{equation}
defined by the equation 
\[
\prod_{j=0}^{n+1} x_j = 1+y
\]
where $[x_0:\cdots x_{n+1}]$ is the homogeneous coordinate in $Y_\Sigma=\big(\Lambda^{n+2}\setminus Z(\Sigma)\big)/ G$.
Finally, inspired by (\ref{F_k_pre}) and (\ref{order_statistic_reverse_sign}), we discover the following construction.

For any $0\leqslant k \leqslant n+1$, we define
\begin{equation}
	\label{F_k_defn_eq}
	F_k=\left\{
	\sum_{j=0}^{n+1} \  (j-k) \cdot \val(x_j)  +k\min\{0,\val(y)\}, 
	\quad \  \  \  \{\psi(\val(y),|a_j|): 0\leqslant j \leqslant n\} 
	\right\}_{[k]}
\end{equation}

\begin{rmk}
It can be hard to articulate precisely why we define $F_k$ in this specific manner as opposed to another. We hope that the discussion in \S \ref{ss_outline_F_k_F} will offer some clues. However, to truly pinpoint the appropriate formula, intensive trials and computations are unavoidable. We sincerely apologize the insufficiency of motivation here.
\end{rmk}

Integrating all components, we define
\begin{equation}
	\label{F_defn_eq}
	F= \big(F_0,F_1,\dots, F_{n+1}, \val(y) \big) :  Y_\Sigma^*  \to \mathbb R^{n+3}
\end{equation}

\begin{lem}
	\label{F_global_image_lem}
	The image of $F$ is given by the surface $\pmb\gamma$ in (\ref{pmb_gamma_eq}).
\end{lem}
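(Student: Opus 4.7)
The plan is to establish $F(Y_\Sigma^*)=\pmb\gamma(\mathbb R^2)$ via two inclusions, working throughout in homogeneous coordinates $[x_0:\cdots:x_{n+1}]$ subject to $\prod_j x_j=1+y$ as in (\ref{homogeneous_coordinates_X_y_eq}). The key tool will be a short combinatorial lemma on order statistics of a weakly decreasing sequence.

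For $F(Y_\Sigma^*)\subseteq\pmb\gamma(\mathbb R^2)$, fix $P\in Y_\Sigma^*$ and set $s=\val(y)$, $A=\sum_{j=0}^{n+1} j\val(x_j)$, and $B=\sum_{j=0}^{n+1}\val(x_j)$. By (\ref{val_1+y_homogeneous}), $B=\val(1+y)$; the ultrametric inequality gives $B\geq \min\{0,s\}$, so $\tau:=B-\min\{0,s\}\in[0,+\infty]$. A direct rearrangement rewrites the first argument inside $\{\cdots\}_{[k]}$ in (\ref{F_k_defn_eq}) as $c_k:=A-k\tau$, a weakly decreasing sequence in $k$. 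When $s\neq 0$, strict ultrametric equality forces $\tau=0$ and $c_k\equiv A$, so $F(P)=\pmb\gamma(s,A)$ at once. The substantive case is $s=0$ with $\tau>0$, where I would invoke the following combinatorial lemma: \emph{for any weakly decreasing $c_0\geq c_1\geq\cdots\geq c_{n+1}$ in $\mathbb R\cup\{\pm\infty\}$ and strictly increasing $\psi_0<\cdots<\psi_n$ in $\mathbb R$, there exists a single $c\in\mathbb R\cup\{\pm\infty\}$ with $\{c_k,\psi_0,\ldots,\psi_n\}_{[k]}=\{c,\psi_0,\ldots,\psi_n\}_{[k]}$ for every $0\leq k\leq n+1$.} The proof tracks the rank $h(k):=|\{j:\psi_j\leq c_k\}|$; monotonicity of $c_k$ in $k$ and $\psi_j$ in $j$ forces $h(k)-k$ to strictly decrease at each step, so it crosses zero at some transition $k^\ast$. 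If $h(k^\ast)=k^\ast$, then by (\ref{order_statistics_computation}) we get $F_k=\psi_k$ for $k<k^\ast$, $F_{k^\ast}=c_{k^\ast}$, and $F_k=\psi_{k-1}$ for $k>k^\ast$, matching $\pmb\gamma(0,c_{k^\ast})$ via (\ref{line_segments}); if $h(k)$ skips the diagonal at $k^\ast$, the output coincides with the corner point $A_{k^\ast-1}$ of (\ref{corner_point_gamma}), still on $\pmb\gamma$.

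For the reverse inclusion $\pmb\gamma(\mathbb R^2)\subseteq F(Y_\Sigma^*)$, given $(s,c)\in\mathbb R^2$ I would construct $P$ explicitly: take $y=T^s$ when $s\neq 0$ and $y=1$ when $s=0$, and set $x_0=T^{-c}(1+y)$, $x_1=T^c$, and $x_j=1$ for $2\leq j\leq n+1$. A direct check verifies $\prod_j x_j=1+y$, $\val(y)=s$, $A=c$, and $\tau=0$, so $F(P)=\pmb\gamma(s,c)$ by the computation of the forward direction.

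The main obstacle is the order-statistic lemma underlying the $s=0$ subcase, particularly the skip configuration where no $k$ attains $h(k)=k$: verifying that $h(k)-k$ genuinely decreases by at least one at each step, and that the skipped output is exactly the corner $A_{k^\ast-1}$ in (\ref{corner_point_gamma}) rather than some nearby point off $\pmb\gamma$, requires patient bookkeeping with the identities in (\ref{order_statistics_computation}). The boundary case $\tau=+\infty$ on the toric divisors (where $y=-1$, so some $\val(x_j)=+\infty$) also merits a separate consistency check to ensure the order-statistic formulas extend through the infinities. Once the lemma is in place, the surrounding reductions are elementary.
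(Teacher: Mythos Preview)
Your approach is correct and takes a genuinely different route from the paper. The paper proceeds by explicit interval casework: for $s=0$ it partitions the real line into the $2n+3$ intervals $\mathcal I_-, \mathcal I_0, \mathcal I_{0,1},\ldots,\mathcal I_{n-1,n},\mathcal I_n,\mathcal I_+$ determined by the increasing sequence in (\ref{sequence_numbers_eq}), then computes $F$ on each interval by hand, and finally treats the toric boundary $y=-1$ as a separate Case 3 with its own subcases indexed by orbits $O(v_{k_0})$ and $O(\sigma_{k_0})$. Your combinatorial lemma replaces all of this: the rank function $h(k)-k$ is integer-valued and strictly decreasing, so its first non-positive value determines either a diagonal hit $h(k^\ast)=k^\ast$ (giving $\pmb\gamma(0,c_{k^\ast})$) or a skip (giving the corner $A_{k^\ast-1}$), and this uniform argument absorbs the paper's eight subcases at once. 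Your explicit preimage construction for the reverse inclusion is also sharper than the paper's, which leaves surjectivity onto $\pmb\gamma(s,\cdot)$ implicit.

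One point to tighten: on the toric divisors where $y=-1$, you cannot write $c_k=A-k\tau$ since both $A$ and $\tau$ are $+\infty$; instead define $c_k$ directly as $\sum_j(j-k)\val(x_j)$. Because at most two consecutive $x_j$ vanish by (\ref{Z_Sigma_irrelevant}), this $c_k$ is always a well-defined element of $\mathbb R\cup\{\pm\infty\}$ and is still weakly decreasing in $k$ (the difference $c_k-c_{k+1}=\sum_j\val(x_j)\in[0,+\infty]$), so your lemma applies unchanged. With that adjustment the obstacle you flag disappears and your argument is complete.
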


\begin{proof}
	We are going to describe the image of $F$ by looking at its image restricted on $\val(y)=s$ for every fixed $s$.
	We write an input point as $\pmb x=([x_0:\cdots x_{n+1}], y)$ with respect to the homogeneous coordinates.
	First, we introduce a variable
	\begin{equation}
		\label{c_variable}
		c:= \sum_{j=0}^{n+1} \  j \cdot \val(x_j) \quad  \in  (-\infty, \infty] =\mathbb R\cup \{\infty\} 
	\end{equation}
	It is routine by (\ref{G_group_eq}) to check that for any $t=(t_0,\dots, t_{n+1})\in G$, replacing $(x_j)$ by $(t_jx_j)$ will not affect the value of $c$.
	Besides, for clarity, we set
	$
	\psi_k=\psi_k(s)=\psi(s,|a_k|)
	$ as in (\ref{psi_k_abbrev_notation}). Recall that by Proposition \ref{increasing_psi_prop}, $\psi_0<\psi_1<\cdots <\psi_n$.

	\vspace{0.7em}
	
	\noindent
	\textbf{Case 1: $s\neq 0$ . }

	Since $\val(1+y)=\min\{0,\val(y)\}=\min\{0,s\}\neq \infty$, it follows from (\ref{val_1+y_homogeneous}) that $\val(x_j)\neq \infty$ for all $j$ and thus $c\neq \infty$. Moreover,
	\[
	\sum_{j=0}^{n+1} (j-k)\val(x_j)+k\min\{0,\val(y)\} =c
	\]
	and therefore $F_k=\gamma_k(s,c)$.
	In summary, for every $s\neq 0$, the image of $F$ on $\val(y)=s$ precisely matches the image of $\pmb \gamma(s,\cdot)$ in (\ref{pmb_gamma_eq}).


	\vspace{0.7em}
	
	\noindent
	\textbf{Case 2: $s= 0$ and $y\neq -1$. }
	
	Here we exclude the infinite non-archimedean valuation and avoid another layer of complexity by assuming $y\neq -1$.
	We introduce a new variable
	\[
	\tau:=\val(1+y) =\sum_{j=0}^{n+1}\val(x_j)  \in [0, \infty)
	\]
	by virtue of (\ref{val_1+y_homogeneous}).
	Indeed, $\tau=\val(1+y)\geqslant \min\{0, s\}=0$ since $\val(y)=s=0$, and $\tau\neq \infty$ since we have assumed $1+y\neq 0$.
	Similarly, we have $c\neq \infty$.
	For a fixed $\tau$, we have the following increasing sequence of real numbers:
	\begin{equation} 
		\label{sequence_numbers_eq}
		\begin{matrix}
			& \  \psi_0  &\leqslant & \tau +\psi_0 \\
			<&  \ \tau +\psi_1 &\leqslant  & 2\tau +\psi_1 \\
			&  \vdots  & &  \vdots \\
			<& \ (k-1)\tau + \psi_{k-1} &\leqslant & k\tau + \psi_{k-1} \\
			<& \ k\tau + \psi_k &\leqslant & (k+1)\tau + \psi_k \\
			<& \ (k+1)\tau + \psi_{k+1} &\leqslant & (k+2)\tau + \psi_{k+1} \\
			&  \vdots & &   \vdots \\
			<& \ (n-1)\tau + \psi_{n-1} &\leqslant & n\tau +\psi_{n-1} \\
			<& \  n\tau+\psi_n &\leqslant & (n+1)\tau + \psi_n
		\end{matrix}
	\end{equation}
	In particular, when $\tau > 0$, all the aforementioned non-strict inequalities transform into strict ones.
	Furthermore, this sequence of ascending real numbers divides the real line into a set of intervals (where we choose to include the end-points):
	\begin{align*}
		&	\mathcal I_{-}= (-\infty, \psi_0] \\
		&	\mathcal I_k=[k\tau+\psi_k, (k+1)\tau+\psi_k]  && 0\leqslant k\leqslant n \\
		&	\mathcal I_{k,k+1}=[(k+1)\tau+\psi_k, (k+1)\tau+\psi_{k+1} ] && 0\leqslant k \leqslant n-1 \\
		&	\mathcal I_+=[(n+1)\tau+\psi_n, +\infty)
	\end{align*}
We compute
	\[
	\sum_{j=0}^{n+1} (j-k)\val(x_j)+k\min\{0,\val(y)\} = c-k\tau  \  .
	\]
By the formula (\ref{F_k_defn_eq}) of $F_k$, we obtain
\[
F_{k}=  \Big\{  c-k\tau ,   \psi_0,\dots, \psi_n  \Big\}_{[k]}  =\gamma_k(0,c-k\tau)  \  .
\]
Utilizing (\ref{order_statistics_computation}) can produce the results in the following subcases:

	\vspace{0.5em}
	\noindent
	\underline{\textit{Subcase 2-1:}} 
	$c\in\mathcal I_-$.
	
	We first have $F_0=\gamma_0(0,c)=c$ for all $c\in\mathcal I$. Besides, given each $1\leqslant k\leqslant n+1$, we have $F_k= \gamma_k(0,c-k\tau)=\psi_{k-1}$ since $c\leqslant \psi_0 \leqslant k\tau + \psi_{k-1}$ and thus $c-k\tau \leqslant \psi_{k-1}$. Consequently,
	$F=(\tilde c, \psi_0, \dots \psi_n)=\pmb\gamma(0,\tilde c) $ for $\tilde c=c\in (-\infty, \psi_0]$.

	\vspace{0.5em}
	\noindent
	\underline{\textit{Subcase 2-2:}} 
	$c\in\mathcal I_{k_0}$ for some $0\leqslant k_0\leqslant n$.
	
For $0\leqslant k\leqslant k_0$, we have $c-k\tau \geqslant c-k_0\tau\geqslant \psi_{k_0} \geqslant \psi_k$, hence, $\gamma_k(0,c-k\tau)=\psi_k$.
Moreover, for $k_0+1\leqslant k\leqslant n$, we have $c-k\tau \leqslant c-(k_0+1)\tau \leqslant \psi_{k_0} \leqslant \psi_{k-1}$, hence, $\gamma_k(0 ,c-k\tau) =\psi_{k-1}$. To sum up, $F=(\psi_0,\dots, \psi_{k_0}, \psi_{k_0},\psi_{k_0+1},\dots, \psi_n)=A_{k_0}(0)$ is the $k_0$-th corner point (\ref{corner_point_gamma}).

	\vspace{0.5em}
	\noindent
	\underline{\textit{Subcase 2-3:}} 
	$c\in\mathcal I_{k_0,k_0+1}$ for some $0\leqslant k_0\leqslant n-1$.
	
	For $0\leqslant  k\leqslant  k_0$, we get $c-k\tau \geqslant c-k_0\tau > c-(k_0+1)\tau \geqslant \psi_{k_0} \geqslant \psi_k$, therefore, $F_k=\gamma_k(s,c-k\tau)=\psi_k$.
	Besides, for $k_0+2\leqslant k\leqslant n$, we get $c-k\tau \leqslant c-(k_0+2)\tau < c-(k_0+1)\tau \leqslant \psi_{k_0+1} \leqslant \psi_{k-1}$, therefore, $F_k=\gamma_k(0,c-k\tau) =\psi_{k-1}$.
	Finally, for $k=k_0+1$, we note that $\tilde c:=c-k\tau=c-(k_0+1)\tau \in [\psi_{k_0},\psi_{k_0+1}]$ and so $F_{k_0+1}=\gamma_{k_0+1}(0 ,c-(k_0+1)\tau) =c-(k_0+1)\tau=\tilde c$.
	To sum up, we obtain $F=(\psi_0,\dots, \psi_{k_0}, \tilde c, \psi_{k_0+1},\dots, \psi_n)$ for $\tilde c\in [\psi_{k_0},\psi_{k_0+1}]$.
	This agrees with the line segment of $c\mapsto \pmb\gamma(0,c)$ that connect the two corner points $A_{k_0}$ and $A_{k_0+1}$ (\ref{corner_point_gamma}).

	\vspace{0.5em}
	\noindent
	\underline{\textit{Subcase 2-4:}} 
	$c\in\mathcal I_+$.
	
	Given any $0\leqslant k \leqslant n$, we have $c-k\tau \geqslant c-n\tau > c-(n+1)\tau \geqslant \psi_n \geqslant \psi_k$, thus, $F_k=\gamma_k(s,c-k\tau)=\psi_k$.
	For $k=n+1$, we have $\tilde c:=c-k\tau=c-(n+1)\tau \geqslant \psi_n $ and $F_{n+1}=\gamma_{n+1}(0,\tilde c)=\tilde c$. To sum up, we obtain $F=(\psi_0,\psi_1,\dots, \psi_n, \tilde c)=\pmb\gamma(0,\tilde c)$ for any $\tilde c\in[\psi_n,+\infty)$.

	\vspace{0.7em}
	\noindent
	\textbf{Case 3: $s=0$ and $y= -1$.}

By (\ref{chi_01=0_eq}), this case corresponds to the points in $\mathcal D_0\cup \mathcal D_1\cup\cdots \cup \mathcal D_{n+1}$, the union of torus-invariant prime divisors. Recall that $\mathcal D_i\cap \mathcal D_j=\varnothing$ for $j-i\ge 2$ and that $\mathcal D_k\cap \mathcal D_{k+1}=\{x_k=x_{k+1}=0\}=O(\sigma_k)$.
Now, we consider the following sub-cases of the input point $\pmb x=([x_0:\cdots: x_{n+1}], y)$ in view of (\ref{divisor_torus_invariant_Bside}).

	\vspace{0.5em}
	\noindent
	\underline{\textit{Subcase 3-1:}} $\pmb x\in O(v_{k_0})$ for $0\leqslant k_0\leqslant n+1$. Namely, $x_{k_0}=0$, and $x_{k_1}\neq 0$ for all other $k_1 \neq k_0$.
	We then have $\val(x_{k_0})=\infty$ and $c=\infty$.
	We compute
	\[
	\sum_{j=0}^{n+1} (j-k)\val(x_j)+k\min\{0,\val(y)\} 
	=
	\begin{cases}
	+\infty	& \text{if  } k \leqslant k_0-1 \\
	\tilde c & \text{if  } k=k_0 \\
	-\infty & \text{if  } k\geqslant k_0+1
	\end{cases}
	\]
	where $\tilde c$ can be arbitrary in $\mathbb R$.
	Then, $F_k=\psi_k$ for $k\leqslant k_0-1$, and $F_k=\psi_{k-1}$ for $k\geqslant k_0+1$. Besides, 
	$
	F_{k_0}=\gamma_{k_0}(0,\tilde c)$ is equal to $\psi_{k_0-1}$, $\tilde c$, or $\psi_k$ when $\tilde c\leqslant \psi_{k_0-1}$, $\psi_{k_0-1}\leqslant c\leqslant \psi_{k_0}$, or $c\geqslant \psi_{k_0}$ respectively. As a result, the image of $F$ in this case coincides with the path $\tilde c\mapsto ( \psi_0,\dots ,\psi_{k_0-1}, \tilde c,  \psi_{k_0},\dots, \psi_n )$ for only $\tilde c\in [\psi_{k_0-1},\psi_{k_0}]$. This is exactly the $(k_0+1)$-th line segment of $\pmb\gamma(0,\cdot)$ between the two corner points $A_{k_0-1}(0)$ and $A_{k_0}(0)$.

	\vspace{0.5em}
	\noindent
	\underline{\textit{Subcase 3-1:}} $\pmb x\in O(\sigma_{k_0})$ for $0\leqslant k_0\leqslant n$. Namely, $x_{k_0}=x_{k_0+1}=0$, and $x_{k_1}\neq 0$ for all other $k_1 \neq k_0, k_0+1$.
	We then compute
	\[
	\sum_{j=0}^{n+1} (j-k)\val(x_j)+k\min\{0,\val(y)\} 
	=
	\begin{cases}
		+\infty	& \text{if  } k \leqslant k_0 \\
		-\infty & \text{if  } k\geqslant k_0+1
	\end{cases}  \  .
	\]
	It follows that $F_k=\psi_k$ for $k\leqslant k_0$ and $F_k=\psi_{k-1}$ for $k\geqslant k_0+1$. Thus, $F=(\psi_0,\dots, \psi_{k_0},\psi_{k_0},\dots, \psi_n)$ is the corner point $A_{k_0}(0)$ (\ref{corner_point_gamma}).
	
Integrating the discussions in Case 2 and Case 3, we conclude that the image of $F$ on $\val(y)=0$ agrees with the image of $\pmb\gamma(0,\cdot)$.
The proof is now complete. 
\end{proof}

\begin{cor}
	\label{F_image_toric_divisor_cor}
	For $0\le k\le n+1$, the $F$-image of the irreducible toric divisor $\mathcal D_k$ is the $(k+1)$-th line segment $c\mapsto \pmb\gamma(0, c)$. In other words, using the notations at the end of \S \ref{ss_order_statistics},
	we have $F(\mathcal D_0)=\pmb \gamma_-$, $F(\mathcal D_{n+1})=\pmb\gamma_+$, and $F(\mathcal D_k)=\pmb\gamma_{k,k+1}$ for $0\leqslant k\leqslant n-1$.
\end{cor}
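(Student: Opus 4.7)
The plan is to obtain this corollary as an essentially immediate harvest of the case analysis already performed inside the proof of Lemma \ref{F_global_image_lem}, specifically Case 3 (where $\val(y)=0$ and $y=-1$). Recall from \S \ref{sss_divisor_toric_invariant} that $\mathcal D_k$ is cut out in homogeneous coordinates by $x_k=0$, and by (\ref{homogeneous_coordinates_1+y+eq}) this forces $1+y=\prod_j x_j=0$, hence $y=-1$ and $\val(y)=0$. So every point of $\mathcal D_k$ lies in the locus treated by Case 3, and the orbit-cone decomposition (\ref{divisor_torus_invariant_Bside}) tells us which subcases to invoke.

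First I would handle the generic stratum $O(v_k)\subset \mathcal D_k$ for $1\leqslant k\leqslant n$. On this stratum only $x_k$ vanishes, and by Subcase 3-1 of Lemma \ref{F_global_image_lem} the auxiliary quantity $c$ from (\ref{c_variable}) can be any $\tilde c \in \mathbb R$, while $F_j=\psi_j$ for $j<k$, $F_j=\psi_{j-1}$ for $j>k$, and $F_k=\gamma_k(0,\tilde c)$. The constraint that the output lie on the surface $\pmb\gamma$ pins $\tilde c$ to the interval $[\psi_{k-1},\psi_k]$, yielding precisely the line segment of $\pmb \gamma(0,\cdot)$ connecting $A_{k-1}(0)$ and $A_k(0)$ in (\ref{line_segments}). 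For the boundary divisors $\mathcal D_0$ and $\mathcal D_{n+1}$, whose generic orbits are $O(v_0)$ and $O(v_{n+1})$, the identical computation (with the natural conventions $\psi_{-1}=-\infty$ and $\psi_{n+1}=+\infty$) produces $\pmb\gamma_-$ and $\pmb\gamma_+$ respectively.

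Next I would close up by adjoining the zero-dimensional orbits. For $1\leqslant k\leqslant n$, the topological closure $\mathcal D_k=O(\sigma_{k-1})\sqcup O(v_k)\sqcup O(\sigma_k)$ attaches the two fixed points $O(\sigma_{k-1})$ and $O(\sigma_k)$, and Subcase 3-2 of Lemma \ref{F_global_image_lem} computes that these map under $F$ exactly to the corner points $A_{k-1}(0)$ and $A_k(0)$ from (\ref{corner_point_gamma}). Analogous statements handle the single boundary corner on each of $\mathcal D_0$ and $\mathcal D_{n+1}$. Combining this with the parametrization of the open stratum exhibits $F(\mathcal D_k)$ as the entire closed line segment, as claimed.

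No substantive obstacle is expected, since the heavy lifting has already been carried out inside Lemma \ref{F_global_image_lem}. The only point that merits a small sanity check is verifying that the auxiliary parameter $\tilde c$ actually sweeps through all of $[\psi_{k-1},\psi_k]$ as one varies the point in $O(v_k)$: given any target $\tilde c$ in that interval, one must exhibit homogeneous coordinates $(x_j)$ with $x_k=0$ (whence $\val(x_k)=+\infty$, consistent with $c=\sum_j j\val(x_j)$ being finite only through the $j=k$ convention absorbed into the order-statistic cutoff), with $\prod_j x_j=1+y$ and $\val(y)=0$, and with $\sum_{j\neq k} j\val(x_j)=\tilde c$; this is straightforward using the $G$-action in (\ref{G_group_eq}) to rescale the remaining $x_j$'s freely.
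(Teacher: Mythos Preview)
Your approach is correct and identical to the paper's: the paper's proof is the single sentence ``This is an immediate byproduct of the Case 3 in the proof of Lemma \ref{F_global_image_lem},'' and you have simply spelled out that byproduct by invoking Subcase 3-1 for the open orbit $O(v_k)$ and Subcase 3-2 for the closed orbits $O(\sigma_{k-1}),O(\sigma_k)$. One small slip in your final sanity check: the free parameter on $O(v_{k_0})$ is $\tilde c=\sum_{j\ne k_0}(j-k_0)\val(x_j)$ (the $j=k_0$ term drops because its coefficient is zero), not $\sum_{j\ne k} j\val(x_j)$; with this correction the surjectivity onto the segment is immediate.
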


\begin{proof}
	This is an immediate byproduct of the Case 3 in the proof of Lemma \ref{F_global_image_lem}.
\end{proof}

Remark that every $\mathcal D_k=\{x_k=0\}$ avoid points with $y=0$. Hence, by (\ref{Y*_Sigma_global_eq}), $\mathcal D_k$ is contained in $Y_\Sigma^*=Y_\Sigma\setminus \{y=0\}$, the domain of $F$ (\ref{F_defn_eq}).
Then, the following result is also straightforward.

\begin{cor}
	\label{F_image_torus_open_dense_torus_cor}
	The $F$-image of the open dense toric orbit removing the divisor $y=0$ is the embedded surface $\pmb\gamma$ removing $n+1$ points $A_k(0)$ ($0\leqslant k\leqslant n$).
\end{cor}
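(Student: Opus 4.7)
The plan is to read off the claim directly from Cases 1 and 2 in the proof of Lemma \ref{F_global_image_lem}, in direct analogy with Corollary \ref{F_image_toric_divisor_cor} (which used Case 3). The overall picture is: the partition of $Y_\Sigma^*$ into (open orbit)$\,\sqcup\,(\bigcup_k \mathcal D_k)$ mirrors exactly the dichotomy $y\neq -1$ versus $y=-1$ that organizes the case analysis.

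First, I would identify the open dense toric orbit minus $\{y=0\}$ with the locus of $\pmb x \in Y_\Sigma^*$ whose homogeneous coordinates all satisfy $x_j\neq 0$. By the identity (\ref{val_1+y_homogeneous}), this is equivalent to $\val(1+y)<\infty$, i.e.\ $y\neq -1$. Combined with the condition $y\neq 0$ built into $Y_\Sigma^*$, this is precisely the set of inputs handled in Case 1 ($s\neq 0$) and Case 2 ($s=0$, $y\neq -1$) in the proof of Lemma \ref{F_global_image_lem}. Case 3 ($y=-1$) instead recovers the union of toric divisors $\bigcup_{k=0}^{n+1}\mathcal D_k$ via (\ref{chi_01=0_eq}).

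Next, I would extract the image from these two cases. Case 1 contributes the entire off-axis slab $\pmb\gamma(\{s\neq 0\})$. For Case 2, the Subcases 2-1, 2-3, and 2-4 sweep out the interiors of the line segments $\pmb\gamma_-$, $\pmb\gamma_{k-1,k}$ (for $1\leqslant k\leqslant n$), and $\pmb\gamma_+$ respectively. The residual Subcase 2-2, which outputs the corner $A_{k_0}(0)$ when $c\in \mathcal I_{k_0}$, corresponds on the toric side to the simultaneous degeneration of $x_{k_0}$ and $x_{k_0+1}$, i.e.\ to the torus-fixed orbit $O(\sigma_{k_0}) = \mathcal D_{k_0}\cap \mathcal D_{k_0+1}$ already handled in Subcase 3-2. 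Thus the corner points $A_k(0)$ should be realized only on the divisor-intersection locus, not on the strict open orbit.

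Combining, I conclude that $F$ of (open orbit minus $\{y=0\}$) equals
\[
\pmb\gamma(\{s\neq 0\}) \; \cup \; \big( \pmb\gamma(\{s=0\}) \setminus \{A_0(0),\dots,A_n(0)\} \big) \;=\; \pmb\gamma(\mathbb R^2)\setminus \{A_0(0),\dots,A_n(0)\},
\]
as claimed. The main obstacle I expect is precisely the bookkeeping in the last paragraph: one must verify that every preimage of a corner $A_{k_0}(0)$ inside $Y_\Sigma^*$ necessarily lies on $\mathcal D_{k_0}\cap \mathcal D_{k_0+1}$ (so it is excluded from the open orbit), rather than contributing a genuine open-orbit image point via the positive-length interval $\mathcal I_{k_0}$. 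This parallels, and should be argued in the same spirit as, the separation between Subcase 3-1 and Subcase 3-2 in the proof of Lemma \ref{F_global_image_lem}.
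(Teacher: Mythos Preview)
Your identification of the open dense orbit minus $\{y=0\}$ with Cases 1 and 2 of Lemma \ref{F_global_image_lem} is correct, and you are right to flag the third paragraph as the main obstacle. Unfortunately that obstacle is fatal: the verification you propose fails, because Subcase 2-2 does \emph{not} correspond to points off the open orbit.

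Subcase 2-2 lies inside Case 2, where $y\neq -1$ and hence $\prod_j x_j=1+y\neq 0$, so all $x_j\neq 0$ and every input there is a genuine open-orbit point. When $s=0$ and $\tau=\val(1+y)>0$ (realized for instance by $y=-1+T^\tau$), the interval $\mathcal I_{k_0}=[k_0\tau+\psi_{k_0},\,(k_0+1)\tau+\psi_{k_0}]$ has positive length, and for any $c$ in it one may choose homogeneous coordinates with all $x_j\neq 0$, $\prod_j x_j=1+y$, and $\sum_j j\,\val(x_j)=c$. The computation in Subcase 2-2 then gives $F=A_{k_0}(0)$. Thus the open orbit \emph{does} hit every corner $A_k(0)$, and its $F$-image is all of $\pmb\gamma(\mathbb R^2)$, not $\pmb\gamma(\mathbb R^2)\setminus\{A_0(0),\dots,A_n(0)\}$. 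The corollary as literally stated does not follow from the case analysis; the statement itself appears to be in error. (Its intended role, linking to the smooth locus of $F$, is in any case handled independently by the direct argument in Lemma \ref{F_global_smooth_locus_lem}.)
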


We are next going to show that this actually agrees with the smooth locus of $F$ (Lemma \ref{F_global_smooth_locus_lem}).

\subsection{Singular and smooth loci}

The objective here is to study the smooth/singular loci of the tropically continuous map $F$ in the previous section (\ref{F_defn_eq}).

\begin{lem}
	\label{F_global_smooth_locus_lem}
	The singular locus of $F$ consists of the $n+1$ points $A_k(0)$ for $0\leqslant k\leqslant n$.
\end{lem}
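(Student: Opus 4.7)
The proof plan divides into two complementary tasks: showing that $F$ is smooth at every non-corner point of $\pmb\gamma$, and showing that each of the $n+1$ corners $A_k(0)$ is genuinely singular.

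\textbf{Smoothness off the corners.} Fix a non-corner point $p_0=\pmb\gamma(s_0,c_0)$ and let $\ell\in\{0,\ldots,n+1\}$ be the index with $c_0\in[\psi_{\ell-1}(s_0),\psi_\ell(s_0)]$ (using the convention $\psi_{-1}=-\infty$ and $\psi_{n+1}=+\infty$); if $c_0=\psi_\ell(s_0)$ is an edge value then necessarily $s_0\neq 0$ by the non-corner hypothesis. The strategy is to use the pair of invertible analytic functions $(y,w_\ell)$, where $w_\ell=\chi^{m_\ell}$ is the toric character from (\ref{toric_geometry_z_k_w_k_eq}). Since the toric divisor of $w_\ell$ equals $\sum_j(j-\ell)\mathcal D_j$ with zero coefficient on $\mathcal D_\ell$, the function $w_\ell$ is invertible on the open subvariety $Y_{v_\ell}:=Y_\Sigma\setminus\bigcup_{j\neq\ell}\mathcal D_j$, and $y$ is invertible on $Y_\Sigma^*$. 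The affine toric chart $Y_{\sigma_\ell}$ (together with $Y_{\sigma_{\ell-1}}$ when $1\leq\ell\leq n$) yields an analytic isomorphism $(y,w_\ell):Y_{v_\ell}\setminus\{y=0\}\xrightarrow{\cong}(\Lambda^*)^2$ via the explicit formula $(z,w)=(w_\ell^{-1}(1+y),w_\ell)$ or $(z,w)=(w_\ell^{-1},(1+y)w_\ell)$ in the respective charts. By Corollary~\ref{F_image_toric_divisor_cor}, a sufficiently small neighborhood $V\ni p_0$ in $\pmb\gamma$ avoids the $F$-images of the other divisors $\mathcal D_j$ ($j\neq\ell$) and of the zero-dimensional orbits $O(\sigma_{\ell-1}),O(\sigma_\ell)$, so $F^{-1}(V)\subset Y_{v_\ell}\setminus\{y=0\}$. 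With the homeomorphism
\[
\phi:V\to V'\subset\mathbb R^2,\qquad \phi(\pmb\gamma(s,c))=(s,\,c-\ell\min\{0,s\}),
\]
the defining formula (\ref{F_k_defn_eq}) should yield $F_\ell=\val(w_\ell)+\ell\min\{0,s\}$ on $F^{-1}(V)$ (the inner order-statistic value stays in its interior range on $V$), giving $\phi\circ F=\trop\circ(y,w_\ell)$. In the edge subcase $c_0=\psi_\ell(s_0)$ with $s_0\neq 0$, the identity $\val(z_\ell)+\val(w_\ell)=\val(1+y)=\min\{0,s_0\}$ on the open dense torus forces $\val(w_\ell)=c-\ell\min\{0,s\}$ on both adjacent faces, so the same chart applies. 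This establishes the local isomorphism $F^{-1}(V)\cong\trop^{-1}(V')$ of affinoid torus fibrations.

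\textbf{Singularity at the corners.} Fixing $k$ and working in chart $Y_{\sigma_k}^*$ with $(z,w)=(z_k,w_k)$ and $y=zw-1$, the constraints $s=\val(y)=0$ and $F_k=F_{k+1}=\psi_k$ should reduce via (\ref{F_k_defn_eq}) and (\ref{order_statistics_computation}) to the inequalities $\val(w)\geq\psi_k$ and $\val(z)\geq-\psi_k$; the remaining conditions $F_j=\psi_j$ for $j\notin\{k,k+1\}$ are automatic. Hence
\[
F^{-1}(A_k(0))\cap Y_{\sigma_k}^*=\{(z,w)\in\Lambda^2:\val(z)\geq-\psi_k,\;\val(w)\geq\psi_k,\;zw\neq 1\},
\]
whose tropical image under $(\val(z),\val(w))$ is the full closed quadrant $\{\alpha\geq-\psi_k,\,\beta\geq\psi_k\}$. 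If $F$ were locally an affinoid torus fibration at $A_k(0)$, this fiber would be isomorphic (as an analytic space) to $\trop^{-1}(\mathrm{pt})\subset(\Lambda^*)^2$, whose tropical image is a single point. Since no monomial change of coordinates carries a two-dimensional quadrant to a point, we reach a contradiction, establishing that $A_k(0)$ is indeed singular.

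The main technical obstacle is handling the divisor contribution $F^{-1}(V)\cap\mathcal D_\ell$ in the smoothness argument. A seemingly natural attempt would express $F_\ell$ as $\val(w_\ell(1+y)^\ell)$, which has the correct valuation on the open dense torus; but this function vanishes along $\mathcal D_\ell$ (where $1+y=0$) and so cannot serve as an affinoid coordinate. The remedy is to absorb the piecewise-linear correction $\ell\min\{0,s\}$ into the base chart $\phi$ rather than into any analytic function on $Y_\Sigma^*$, leaving $w_\ell$ alone (regular and nonvanishing along $\mathcal D_\ell$ by the divisor computation) to pair with $y$.
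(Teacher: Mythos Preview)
Your smoothness argument is correct and essentially matches the paper's Cases~1--2, though you organize it more uniformly: instead of splitting into $s\neq 0$ versus $s=0$, you work throughout with the pair $(y,w_\ell)$ on $Y_{v_\ell}^*$, absorbing the piecewise-linear shift into the base chart $\phi$. This is a clean packaging of the same computation (your $(y,w_\ell)$ is exactly the paper's $(y_1',y_2')$ in (\ref{proof_steps_y_12'})). One small omission: your description of the fiber $F^{-1}(A_k(0))$ drops the constraint $\val(zw-1)=0$, which is not automatic on the boundary of the quadrant; but this does not affect the thrust of the argument.

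Your singularity argument, however, has a genuine gap. You correctly compute that $(\val z,\val w)$ maps the fiber onto a two-dimensional region, and then conclude by saying that if $F$ were an affinoid torus fibration the fiber would be isomorphic to $\trop^{-1}(\mathrm{pt})$, ``whose tropical image is a single point,'' and that ``no monomial change of coordinates carries a two-dimensional quadrant to a point.'' But an isomorphism of analytic spaces $F^{-1}(A_k(0))\cong\trop^{-1}(\mathrm{pt})$ need not be monomial, and a pair of (non-invertible) analytic functions on an affinoid torus can certainly have two-dimensional tropical image: on $\{|y_1|=|y_2|=1\}$, take $z=y_1-1$, $w=y_2-1$; then $(\val z,\val w)$ sweeps out all of $[0,\infty]^2$. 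So the two-dimensionality of the image under $(\val z,\val w)$ does not by itself obstruct the fiber from being an affinoid torus.

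The paper's argument instead exploits the integral affine structure. If $A_k(0)$ were $F$-smooth, then in a neighborhood $U$ both $\val(w_k)=\sum_j(j-k)\val(x_j)$ (coming from the chart below, at a nearby point $p_-$) and $\val(w_{k+1})=\sum_j(j-k-1)\val(x_j)$ (from the chart above, at $p_+$) would extend to integral affine functions on $U$, i.e.\ each would equal $\val(h)$ for some invertible analytic $h$ on $F^{-1}(U)$. Their difference is $\sum_j\val(x_j)=\val(1+y)$, which would then also be integral affine; but $1+y$ vanishes on the divisors $\mathcal D_k,\mathcal D_{k+1}$ contained in $F^{-1}(U)$, so it is not invertible, and more concretely $\val(1+y)$ is not constant on the fiber $F^{-1}(A_k(0))$ (it ranges over $[0,\infty]$ as your own quadrant computation shows). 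This monodromy-type obstruction is what you need to invoke; the fiber-shape comparison you give does not suffice.
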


\begin{proof}
By Lemma \ref{F_global_image_lem}, we know the image of $F=(F_0,\dots, F_{n+1},\val(y))$ in $\mathbb R^{n+3}$ is given by the embedded surface $(s,c)\mapsto \pmb\gamma(s,c)$ in (\ref{pmb_gamma_eq}).
Recall that for any given $s$, the slice $c\mapsto \pmb\gamma(s,c)$ is a broken line of with $n+2$ segments $\pmb\gamma_-,\pmb\gamma_{0,1},\dots, \pmb\gamma_{n-1,n}, \pmb\gamma_+$ and $n+1$ corner points $A_0(s),A_1(s),\dots, A_n(s)$ successively. Here we use the notations in \S \ref{ss_order_statistics}.

Let $p=\pmb \gamma(s,c)=(p_0,\dots,p_{n+2}, s)$ be an arbitrary point in the embedded surface of $\pmb\gamma$ in $\mathbb R^{n+3}$. 

\vspace{0.7em}
\noindent
\textbf{\textit{Case 1:}} \textit{$s\neq 0$ .}
Let $U$ be a small neighborhood of $p$ in the embedded surface with respect to the subspace topology in $\mathbb R^{n+3}$.
Shrinking $U$ if necessary, we may require that every other point in $U$ still has nonzero $s$-coordinate and that there exist small numbers $\delta,\epsilon>0$ such that $V:=(s-\epsilon, s+\epsilon) \times (c-\delta, c+\delta)$ is homeomorphic to $U$ by sending $(s',c')$ to $\pmb\gamma(s',c')$.
Besides, we define
\[
\Xi: (y_1',y_2')\mapsto \left( \Big[\frac{1+y'_1}{y_2'} : y_2': 1: 1 : \cdots : 1 \Big], y_1' \right)
\]
for any $(y'_1,y'_2)\in \trop^{-1}(V)$. Here we use the homogeneous coordinates on the right-hand side as (\ref{homogeneous_coordinates_X_y_eq}).
Remark that by definition, $1+y'_1\neq 0$, and thus the image of $\Xi$ is at least contained in the open dense torus in the toric variety.
By (\ref{F_k_defn_eq}), we compute 
\begin{align*}
F_k\circ \Xi  (y_1',y_2') 
&=
\left\{
-k\val\left(\frac{1+y_1'}{y_2'}\right) +(1-k) \val(y_2') +k\min\{0,\val(y_1')\}  , \quad  \psi_0(\val(y_1')),\dots, \psi_n(\val(y_1')) 
\right\}_{[k]} \\
&=
\left\{
c' , \quad \psi_0(s'),\dots, \psi_n(s')
\right\}_{[k]} =\gamma_k(s',c')
\end{align*}
where we put $(s',c')=\trop(y_1',y_2')$, namely, $s'=\val(y_1')$ and $c'=\val(y_2')$. Since $s'\neq 0$, we also note that $\val(1+y'_1)=\min\{0,s'\}$.
Accordingly, $F\circ \Xi=\pmb\gamma\circ \trop$, and $F:F^{-1}(U)\to U$ is isomorphic to $\trop: \trop^{-1}(V)\to V$.
Therefore, we conclude that $p$ is $F$-smooth.

\vspace{0.7em}
\noindent
\textbf{\textit{Case 2:}} \textit{$s= 0$ but the value of $c$ is not any of $\psi_k=\psi_k(0)$ for $0\leqslant k\leqslant n$.}

We may assume $c\in (\psi_{k_0-1}, \psi_{k_0})$ for some $0\leqslant k_0\leqslant n+1$. Here we may allow $k_0=0$ or $n+1$ as we temporarily set $\psi_{-1}=-\infty$ and $\psi_{n+1}=+\infty$.
Then, there exists some small number $\delta>0$ such that $(c-\delta,c+\delta)\subseteq (\psi_{k_0-1},\psi_{k_0})$.
Further, since $\psi$ is continuous, we are able to pick a sufficiently small number $\epsilon>0$ such that 
\begin{equation}
	\label{condition_c+-delta_eq}
(c-\delta,c+\delta)\subseteq (\psi_{k_0-1}(s'), \psi_{k_0}(s'))
\end{equation}
for any $-\epsilon \leqslant s'\leqslant \epsilon$.
Now, we put $V:=(-\epsilon,\epsilon)\times (c-\delta,c+\delta)$, and $U:=\pmb\gamma(V)$ gives a neighborhood of $p$.

Inspired by (\ref{homogeneous_coordiantes_affine_chart}), we define
\begin{align*}
\Xi: (y_1',y_2')\mapsto 
&
\left(\Big[ 
1: \cdots : 1 : x'_{k_0-1}=1:  \  \ \ \  x'_{k_0}=\frac{1+y_1'}{y_2'} :  \ \  \  \  \  \  \  
x'_{k_0+1}= y_2' : 1 :\cdots :1
\Big] , y_1'\right) \\
\equiv 
&
\left(\Big[ 
1: \cdots : 1 : x'_{k_0-1}=\frac{1}{y_2'} : \ \ x'_{k_0}= y_2'(1+y_1') :  \  \  x'_{k_0+1}=1  \  :1 :\cdots :1
\Big] , y_1'\right)
\end{align*}
for any $(y_1',y_2')\in \trop^{-1}(V)$ and $0\leqslant k_0\leqslant n$.
The equality holds because $(1, \dots, 1, t^{-1}, t^{2}, t^{-1}, 1 ,\dots, 1)$ for any $t\in \Lambda^*$ is always an element in the group $G$ (\ref{G_group_eq}).

We first claim that $\Xi$ gives the desired analytic isomorphism from $\trop^{-1}(V)$ to $F^{-1}(U)$.
To see this, let $\mathbf x=([x'_0:\cdots :x'_n], y')$ be an arbitrary point in $F^{-1}(U)$.
Then, there exists a unique pair $(s',c')$ in $V$ such that $F(\mathbf x)=\pmb\gamma(s',c')=(\gamma_0(s',c'),\dots, \gamma_{n+1}(s',c'), s')$.
Utilizing both the above condition (\ref{condition_c+-delta_eq}) and the computations in (\ref{line_segments}) concludes that we have
\[
\begin{cases}
	\gamma_k(s',c')= \psi_k(s') & \text{if  } k\leqslant k_0-1 \\
	\gamma_{k_0}(s',c')=c' \\
	\gamma_k(s',c')= \psi_{k-1}(s) & \text{if  } k\geqslant k_0+1
\end{cases}
\]
and that $F(\mathbf x)=(\psi_0(s'),\dots, \psi_{k_0-1}(s'), c', \psi_{k_0}(s'),\dots, \psi_n(s'), s')$.
Together with the formula (\ref{F_k_defn_eq}) and Lemma \ref{F_global_image_lem}, we derive that
\[
\psi_{k_0-1}(\val(y')) 
< 
\sum_{j=0}^{n+1} (j-k_0) \val(x'_j) +k_0\min\{0,\val(y')\}  
<
\psi_{k_0}(\val(y'))  \  .
\]
Hence, $\sum_{j=0}^{n+1}(j-k_0)\val(x_j')$ is a finite real number. For any $j\neq k_0$, we must have $\val(x_j')\neq \infty$, equivalently $x_j'\neq 0$.
Only $x'_{k_0}$ is possibly zero. In particular, we obtain two invertible analytic functions $y_1'$ and $y_2'$ on $F^{-1}(U)$:
\begin{equation}
	\label{proof_steps_y_12'}
	y_1'=y' \qquad \text{and} \qquad
	y_2'=y'_{2,(k_0)}=\prod_{j=0}^{n+1} (x_j')^{j-k_0}
\end{equation}
The latter is always non-vanishing since $\val(y_2')\neq \infty$. It is then direct to check $([x_0':\cdots :x_{n+1}'],y')\mapsto (y_1',y_2')$ gives the desired inverse of $\Xi$.

We next claim that $\Xi$ indeed intertwines the fibrations $F^{-1}(U)\to U$ and $\trop^{-1}(V)\to V$. Namely, it remains to show $F\circ \Xi=\pmb\gamma \circ \trop$.
Let's write $s'=\val(y_1')$, $c'=\val(y_2')$, and
\[
\Xi (y_1',y_2'):= ( \big[\Xi_0  \  : \cdots : \  \Xi_{n+1} \big] (y_1',y_2'), y_1' )
\]
where $[\Xi_0:\cdots :\Xi_{n+1}]$ refers to the homogeneous coordinates.
By the formula (\ref{F_k_defn_eq}) of $F_k$, we compute
\begin{align*}
F_k \circ \Xi(y_1',y_2') 
&
=\left\{
\sum_{j=0}^{n+1}(j-k) \val(\Xi_j) +k\min\{0, s'\} , \quad \psi_0(s'), \dots, \psi_n(s')
\right\}_{[k]} \\
&
=\left\{
\val(y_2') +(k_0-k)\val(1+y'_1) ,   \quad \psi_0(s'), \dots, \psi_n(s')
\right\}_{[k]} \\
&
=\left\{
c'+(k_0-k)\tau'  ,  \quad \psi_0(s'), \dots, \psi_n(s')
\right\}_{[k]} \\
&
=
\gamma_k( s', c'+(k_0-k)\tau')
\end{align*}
where we set $\tau'=\val(1+y'_1)\geqslant 0$.
Recall that by (\ref{condition_c+-delta_eq}), we have $\psi_{k_0-1}(s')<c'<\psi_{k_0}(s')$. We can use the computations in (\ref{order_statistics_computation}) for the order statistics to derive the following:

If $k\leqslant k_0-1$, then $c'+(k_0-k)\tau' \geqslant c' > \psi_{k_0-1}(s') \geqslant \psi_k(s')$ and thus $\gamma_k( s', c'+(k_0-k)\tau')=\psi_k(s')$.

If $k\geqslant k_0+1$, then $c'+(k_0-k)\tau' \leqslant c'<\psi_{k_0}(s')\leqslant \psi_{k-1}(s')$ and thus $\gamma_k( s', c'+(k_0-k)\tau')=\psi_{k-1}(s')$.

If $k=k_0$, then $\gamma_k( s', c'+(k_0-k)\tau')=\gamma_{k_0}(s',c')=c'$.

Therefore,
\[
F\circ \Xi(y_1',y_2')= \big(\psi_0(s'),  \dots  ,  \psi_{k_0-1}(s'),  c',  \psi_{k_0}(s'),\dots, \psi_n(s'), s'\big) =\pmb\gamma(s',c') =\pmb \gamma \circ \trop(y_1',y_2')  \  .
\]

In summary, we conclude that $p$ is $F$-smooth in the current case.

\vspace{0.7em}
\noindent
\textbf{\textit{Case 3:}} \textit{$s= 0$ and the value of $c$ is exactly one of $\psi_k=\psi_k(0)$ for $0\leqslant k\leqslant n$.}

Without loss of generality, we may assume there is some $0\leqslant k_0\leqslant n$ such that $c=\psi_{k_0}(0)$.
Our purpose is to show that $p$ is \textit{not} $F$-smooth in this case.

Arguing by contradiction, suppose $p$ is $F$-smooth.
There must be an induced integral affine structure on a neighborhood $U$ of $p=\pmb\gamma (0, \psi_{k_0})$ (see \S \ref{ss_integrable_system}).
In other words, it makes sense to say \textit{integral affine functions} on $U$.
Shrinking $U$ if necessary, we may assume $U=\pmb\gamma(V)$ where $V=(-\epsilon,\epsilon)\times (\psi_{k_0}-\delta,\psi_{k_0}+\delta)$ for sufficiently small numbers $\epsilon,\delta>0$.

Now, we choose two points $p_+= \pmb \gamma (0, \psi_{k_0}+\delta/2)$ and $p_-=\pmb\gamma(0,\psi_{k_0}-\delta /2)$ in $U$.
In particular, the condition (\ref{condition_c+-delta_eq}) holds near a neighborhood of $p_-$. Hence, the argument in the previous case can be repeated for $p_-$ and concludes by (\ref{proof_steps_y_12'}) that $\sum_{j=0}^{n+1}(j-k_0)\val(x_j)$ gives an integral affine function. Similarly, an analog of the condition (\ref{condition_c+-delta_eq}), replacing $k_0$ by $k_0+1$, holds near a neighborhood of $p_+$. Hence, we also conclude that $\sum_{j=0}^{n+1}(j-k_0-1)\val(x_j)$ offers an integral affine function.
In particular, their difference, $\sum_{j=0}^{n+1}\val(x_j)$, must be an integral affine function as well. However, by (\ref{val_1+y_homogeneous}), this means $\val(1+y)$ should be an integral affine function, which is impossible as $s=0$.
Therefore, we finally see that $p$ is not $F$-smooth.
\end{proof}

\subsection{Proof of family Floer SYZ conjecture}

We aim to confirm that the tropically continuous fibration $F$ offers a tangible representation for the singular extension of the canonical dual affinoid torus fibration $\pi_0^\vee$. Thus, we will explore our version of the SYZ conjecture for the $A_n$ singularity, ultimately examining the proof of Theorem \ref{Main_thm_oversimplified}.

The base manifold of the special Lagrangian fibration $\pi$ in (\ref{pi_Lagrantian_fibration_eq}) is $B=\mathbb R\times \mathbb R_{>0}$, while the tropically continuous fibration $F$ sends into the embedded surface $\pmb\gamma$ in $\mathbb R^{n+3}$. We aim to develop a matching between them.
Recall that the function $\psi: B\to\mathbb R$ in (\ref{psi_over_B_eq}) essentially captures the K\"ahler geometry of the $A_n$-smoothing under the $S^1$ symmetry. Indeed, we can identify the reduced space at moment $s$ with the complex plane $(\mathbb C, \omega_{red,s})$ furnished with the reduced K\"ahler forms. Accordingly, $\psi(s,r)$ corresponds to the $\omega_{red,s}$-symplectic area of the disk with radius $r$, centered at the origin in $\mathbb C$.
However, we observe that $\psi$ is only continuous on $B$ and smooth on $B_0$. This constraint serves as a primary reason for embracing the notion of tropically continuous fibration.
On the other hand, recall also that the function $\psi(s,r)$ largely forms the expression for the embedded surface $\pmb\gamma$ in (\ref{pmb_gamma_eq}). This will significantly guide the desired construction as follows.

Define
\begin{equation}
	\label{j_k_topo_eq}
	j_k: B=\mathbb R \times \mathbb R_{>0}\to \mathbb R \qquad (s,r)
	\mapsto
	\{\psi(s,r), \psi_0(s), \dots, \psi_n(s) \}_{[k]}
\end{equation}
for $0\leqslant k\leqslant n+1$. Recall that we abbreviate $\psi_k(s)=\psi(s,|a_k|)$ as in (\ref{psi_k_abbrev_notation}).
Then, we define
\begin{equation}
	\label{j_total_topo_eq}
	j=(j_0,j_1,\dots, j_{n+1}, s) : \  \mathbb R\times \mathbb R_{>0} \to \mathbb R^{n+3} 
\end{equation}
by
$
 (s,r)\mapsto \big( j_0(s,r),j_1(s,r),\dots, j_{n+1}(s,r), s \big)
$.
It is clear from the definition that
$
j(s,r)=\pmb\gamma(s,\psi(s,r))
$.
Further, by Proposition \ref{increasing_psi_prop}, the assignment $r\mapsto \psi(s,r)$ is strictly increasing for any fixed $s$ and gives rise to an $s$-dependent diffeomorphism from $\mathbb R_{>0}$ to itself. In particular, we conclude that
\begin{equation}
	\label{j_pmbgamma_corr_eq}
	j(B) = \pmb\gamma (\mathbb R\times \mathbb R_{>0})
\end{equation}

On the other hand, by Lemma \ref{F_global_image_lem}, the image of the tropically continuous fibration $F$ is $\pmb\gamma(\mathbb R\times \mathbb R)$.
A natural question that arises is regarding the $F$-preimage of the open subset in (\ref{j_pmbgamma_corr_eq}).

\begin{lem}
	\label{Y_mathscr_Y_lem}
	The $F$-preimage of $\pmb\gamma(\mathbb R\times \mathbb R_{>0})$ is the analytic open subset
\begin{equation}
	\label{Y_mathscr_eq}
	\mathscr Y
	=
	\left\{
\Big| \prod_{j=0}^{n+1}  x_j^j \Big|<1
	\right\}
\end{equation}
where we use the homogeneous coordinates on $Y_\Sigma^*$ as before. By definition, we immediately have
\begin{equation}
	\label{j=F_image_eq}
	j(B)=F(\mathscr Y)
\end{equation}
Therefore, there is a tropically continuous fibration
\begin{equation}
	\label{f_j_F_mainbody_eq}
	f=j^{-1}\circ F :  \mathscr Y  \to B
\end{equation}
\end{lem}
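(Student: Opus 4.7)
The plan is to reduce the lemma to a clean case-by-case verification, leveraging the parametrization of the surface $\pmb\gamma$ already obtained in the proof of Lemma \ref{F_global_image_lem}. Since $\pmb\gamma$ is a homeomorphism onto its image, every point in $\pmb\gamma(\mathbb R^2)$ has a unique parameter $(s,c^*) \in \mathbb R^2$. The task then becomes: for $\mathbf x = ([x_0:\cdots:x_{n+1}],y) \in Y_\Sigma^*$ with $F(\mathbf x) = \pmb\gamma(\val(y), c^*)$, show that $c^* > 0$ holds if and only if $\sum_j j\cdot \val(x_j) > 0$, i.e., if and only if $\mathbf x \in \mathscr Y$. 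Before the case analysis I would first note that $\prod_j x_j^j$ is $G$-invariant, which follows from the defining relation $\prod_j t_j^j = 1$ for $t \in G$ in (\ref{G_group_eq}); this ensures $\mathscr Y$ is a well-defined analytic open subset of $Y_\Sigma^*$.

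I would then recycle the four-case decomposition from the proof of Lemma \ref{F_global_image_lem}: (1) $s = \val(y) \neq 0$; (2) $s = 0$, $y \neq -1$; (3) $y=-1$ with a single $x_{k_0}=0$; (4) $y=-1$ with $x_{k_0}=x_{k_0+1}=0$. In Case (1), one has $c^* = \sum_j j\cdot\val(x_j)$ directly, so the equivalence is immediate. In Case (2), using the refined subcase split by the intervals $\mathcal I_-, \mathcal I_{k_0}, \mathcal I_{k_0,k_0+1}, \mathcal I_+$ that partition the line $\val(y)=0$, one finds $c^* = c$ in $\mathcal I_-$ and $c^* \geq \psi_{k_0} > 0$ in every other interval, while simultaneously $c = \sum_j j\cdot\val(x_j) > 0$ holds automatically outside $\mathcal I_-$; hence $c^* > 0 \Leftrightarrow c > 0$ uniformly. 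In Cases (3) and (4), the reasoning is analogous: as soon as some $x_j = 0$ with $j \geq 1$, both $c^* > 0$ and $\sum_j j\cdot\val(x_j) = +\infty > 0$ hold trivially, while the only delicate subcase is $k_0 = 0$ in (3), where both sides reduce to the single inequality $\sum_{j\geq 1} j\cdot\val(x_j) > 0$.

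Once $F^{-1}(\pmb\gamma(\mathbb R \times \mathbb R_{>0})) = \mathscr Y$ is established, the equality $j(B) = F(\mathscr Y)$ is immediate from (\ref{j_pmbgamma_corr_eq}) and Lemma \ref{F_global_image_lem}. The composite $f = j^{-1}\circ F$ is then well-defined on $\mathscr Y$; for tropical continuity, I would compose any local tropical expression $F|_{\mathcal U} = \varphi(\val(f_1),\dots,\val(f_N))$ guaranteed by Definition \ref{tropically_continuous_defn} applied to $F$ with the homeomorphism $j^{-1}: j(B) \to B$, yielding a continuous $\tilde\varphi = j^{-1}\circ \varphi$ with values in $B \subset \mathbb R^2$.

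The main obstacle I anticipate is the bookkeeping for edge cases when $\val(y) = 0$: here $c^*$ is not simply $\sum_j j\cdot\val(x_j)$ but a ``folded'' version on the broken line $c \mapsto \pmb\gamma(0,c)$, and one must exploit the uniform positivity $\psi_0(0) < \cdots < \psi_n(0)$ (all strictly positive) to ensure that folding across a corner point $A_{k_0}$ never converts $c^* > 0$ into $c^* \leq 0$ or vice versa. Additionally, the tacit convention $0 \cdot \infty = 0$ used in the expression $\sum_j j\cdot \val(x_j)$ when $x_0 = 0$ deserves a brief justification, as it is precisely what reconciles the formula $|\prod_j x_j^j| < 1$ with the stratification of $Y_\Sigma^*$ by toric orbits.
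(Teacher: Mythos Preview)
Your proposal is correct and follows the same approach as the paper: identify $F(\mathbf x) = \pmb\gamma(s,c^*)$, note that membership in $\pmb\gamma(\mathbb R\times\mathbb R_{>0})$ amounts to $c^*>0$, and verify this is equivalent to $\sum_j j\cdot\val(x_j)>0$ via the case analysis of Lemma~\ref{F_global_image_lem}. The paper's own proof is considerably terser---it asserts the equivalence in one line without spelling out the $s=0$ subcases or the $0\cdot\infty$ convention that you carefully anticipate---but the underlying argument is identical.
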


\begin{proof}
	Let $\mathbf x=([x_0:\cdots: x_n], y)$ be an arbitrary point in the preimage we consider. Then, there exists a unique pair $(s,c)$ such that $F(\mathbf x)=\pmb\gamma(s,c)$.
	Recall that $0<\psi_0(s)<\cdots <\psi_n(s)$ for any $s$.
	Hence, the condition is to require $c>0$.
	By the computation of the order statistics in (\ref{line_segments}) and by the defining formula of $F$ in (\ref{F_defn_eq}), this means that
	\[
	\sum_{j=0}^{n+1} j\cdot \val(x_j) >0  \  .
	\]
	In other words, the non-archimedean norm of $\prod_{j=0}^{n+1} x_j^j$ is smaller than $1$. 
	By (\ref{G_group_eq}), we observe that the expression $\prod_{j=0}^{n+1} x_j^j$ is well-defined for the homogeneous coordinates.
	The proof is complete.
\end{proof}


The following key result integrates all the preceding constructions and completes the proof of Theorem \ref{Main_thm_oversimplified}.

\begin{thm}
	\label{diagram_thm}
	We have the following commutative diagram
	\[
	\xymatrix{
X_0^\vee \ar[rr]^g \ar[d]^{\pi_0^\vee} & & \mathscr Y \ar[d]^F \\
B_0\ar[rr]^j & & \mathbb R^{n+3}	
}
	\]
\end{thm}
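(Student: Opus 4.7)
The plan is to verify $F \circ g = j \circ \pi_0^\vee$ pointwise, working chart-by-chart on the atlas $X_0^\vee = \bigsqcup_k T_k / \sim$ from (\ref{identification_mirror_eq}). Since the $T_k$ cover $X_0^\vee$, it suffices to fix $y = (y_1, y_2) \in T_k$ and verify $F(g(y)) = j(\pi_0^\vee(y))$; compatibility across the gluing is automatic from the toric gluing relation (\ref{toric_gluing_relation}), already invoked to define $g$ in \S \ref{ss_analytic_embedding_semiglobal_global}. Once commutativity is established, the containment $g(X_0^\vee) \subseteq \mathscr Y$ follows for free, since Lemma \ref{Y_mathscr_Y_lem} gives $\mathscr Y = F^{-1}(j(B))$ and $F(g(y)) = j(\pi_0^\vee(y)) \in j(B_0) \subseteq j(B)$.

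The core computation will proceed as follows. Setting $q = (s, r) = \pi_0^\vee(y)$, equation (\ref{identification_dual_affinoid_torus_fibration__eq}) gives $\val(y_1) = s$ and $\val(y_2) = \psi(s, r) - k\min\{0, s\}$. Using $g_k^-$ from (\ref{g_k_z_k_w_k_eq}), one has $w_k = y_2$ and $z_k w_k = 1 + y_1$, so passing $g(y)$ to homogeneous coordinates $[x_0 : \cdots : x_{n+1}]$ and invoking (\ref{val_z_k_w_k_homogeneous})--(\ref{val_1+y_homogeneous}) yields, for every $\ell$,
\[
\sum_{j=0}^{n+1}(j-\ell)\val(x_j) \;=\; \val(w_k) + (k - \ell)\val(1+y_1).
\]
The crux, and the place where the shape of formula (\ref{F_k_defn_eq}) pays off, is that when $s \neq 0$ one has $\val(1+y_1) = \min\{0, s\}$, which together with the correction $+\ell\min\{0, s\}$ built into (\ref{F_k_defn_eq}) collapses the above expression to $\val(y_2) + k\min\{0, s\} = \psi(s, r)$, \emph{independently of} $\ell$. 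Feeding this into the order statistic gives $F_\ell(g(y)) = \{\psi(s, r), \psi_0(s), \ldots, \psi_n(s)\}_{[\ell]} = j_\ell(s, r)$, while the last coordinate $\val(y) = \val(y_1) = s$ also matches. This establishes commutativity on the open locus $\{s \neq 0\} \subset X_0^\vee$.

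The main obstacle will be the wall $\{s = 0\}$, on which $\val(1+y_1)$ can take any non-negative value or even $+\infty$ (e.g.\ at $y_1 = -1$), so the above collapse breaks down. To bridge this I would use a density-continuity argument: $F$ (tropically continuous), $g$ (analytic), $\pi_0^\vee$ (locally modeled on $\trop$, hence continuous and open), and $j$ (topological embedding) are all continuous in the Berkovich topology. Since the wall is closed and nowhere dense in $B_0$, its preimage under the open continuous map $\pi_0^\vee$ is closed nowhere dense in $X_0^\vee$, so $\{s \neq 0\}$ is dense. Two continuous maps to the Hausdorff space $\mathbb R^{n+3}$ that agree on a dense subset agree everywhere, so the equality extends from $\{s \neq 0\}$ to all of $X_0^\vee$, proving Theorem \ref{diagram_thm} and, together with the singular locus matching from Lemma \ref{F_global_smooth_locus_lem}, completing the proof of Theorem \ref{Main_thm_oversimplified}.
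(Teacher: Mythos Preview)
Your computation for $s \neq 0$ is correct and matches the paper's. The gap is in the density--continuity argument you use to handle $s = 0$: the premise that $(\pi_0^\vee)^{-1}(\{s = 0\})$ is nowhere dense in $X_0^\vee$ is false in the Berkovich topology. Locally $\pi_0^\vee$ is the tropicalization $\trop$, and the fiber $\trop^{-1}(0) = \{|y_1| = 1\}$ has nonempty interior: for any classical point $a \in \Lambda^*$ with $|a| = 1$, the open Berkovich disk $\{|y_1 - a| < \epsilon\}$ (for $\epsilon < 1$) is entirely contained in $\{|y_1| = 1\}$ by the ultrametric inequality. Equivalently, $\trop$ is \emph{not} an open map, so you cannot conclude that the preimage of a nowhere-dense set is nowhere dense. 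Hence the locus $\{s \neq 0\}$ is not dense in $X_0^\vee$, and agreement there does not propagate.

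The paper handles $s = 0$ by a direct computation that exploits an extra constraint you have not used: when $s = 0$ and $(s,r) \in U_{k_0}$, the chamber structure (\ref{U_k_eq}) forces $|a_{k_0-1}| < r < |a_{k_0}|$, hence $\psi_{k_0-1}(0) < c < \psi_{k_0}(0)$ with $c = \psi(0,r)$. Your own formula gives the first entry of $F_\ell$ as $c + (k_0 - \ell)\tau$ with $\tau = \val(1+y_1) - \min\{0,s\} \geq 0$; the point is that although $\tau$ is uncontrolled (possibly $+\infty$), the inequality $\psi_{k_0-1} < c < \psi_{k_0}$ makes the order statistic $\{c + (k_0-\ell)\tau, \psi_0, \ldots, \psi_n\}_{[\ell]}$ insensitive to $\tau$: for $\ell \leq k_0 - 1$ the first entry is $\geq c > \psi_\ell$ so the statistic is $\psi_\ell$; for $\ell \geq k_0 + 1$ it is $\leq c < \psi_{\ell-1}$ so the statistic is $\psi_{\ell-1}$; and for $\ell = k_0$ it is exactly $c$. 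This yields $F(g(\mathbf y)) = \pmb\gamma(s,c) = j(\pi_0^\vee(\mathbf y))$ directly, with no appeal to density.
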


\begin{proof}
	Recall that we have established the identification $X_0^\vee\equiv \bigsqcup_{k=0}^{n+1} T_k / \sim$ in (\ref{identification_mirror_eq}) and the subsequent one for $\pi_0^\vee$ in (\ref{identification_dual_affinoid_torus_fibration__eq}).
	Recall also that $T_k=\trop^{-1}(\chi_k(U_k))$ by (\ref{T_k_eq}) and the smooth locus $B_0$ is covered by the contractible open subsets $U_0$, $U_1$, $\dots$, and $U_{n+1}$ in (\ref{U_k_eq}).
	
	Fix a point $\mathbf y$ in $X_0^\vee$, and write $(s,r)=\pi_0^\vee(\mathbf y)$ in $B_0$. Then, we immediately obtain
$
	j\circ \pi_0^\vee (\mathbf y)
	=\pmb\gamma(s,\psi(s,r))
$
which is expected to align with $F\circ g(\mathbf y)$. Let's verify this as follows.

There always exists some $0\leqslant k_0\leqslant n+1$ (perhaps not unique) such that $(s,r)\in U_{k_0}$.
Then, $\mathbf y$ is identified with a point $(y_1,y_2)$ in $T_{k_0}$.
Due to (\ref{identification_dual_affinoid_torus_fibration__eq}), we know that $\val(y_1)=s$ and $\val(y_2)=c-k_0\min\{0,s\}$, where for clarity we set
\[
c:=\psi(s,r)  \  .
\]
 If $s=0$, then by virtue of (\ref{U_k_eq}), we necessarily have \[|a_{k_0-1}|<r<|a_{k_0}|
	\]
	(Here we mean $0<r<|a_0|$ when $k_0=0$ and $|a_n|<r<\infty$ when $k_0=n+1$.)
	
	\noindent
	Using Proposition \ref{increasing_psi_prop} then obtains that
	\[
	\psi_{k_0-1}(s)  < c <\psi_{k_0}(s)
	\]
	(Here we mean $0<\psi(s,r)<\psi_0(s)$ when $k_0=0$ and $\psi_n(s)<\psi(s,r)<\infty$ when $k_0=n+1$.)
	According to (\ref{g_k_z_k_w_k_eq}) and (\ref{homogeneous_coordiantes_affine_chart}), we can compute each component of $F\circ g(\mathbf y)$ as follows:
	\begin{align*}
		F_k\circ g(\mathbf y)&= F_k \circ g_{k_0}(y_1,y_2)
		=F_k \Big(\big[1:\cdots : 1 : \underset{\big(x_{k_0}\big)}{ y_2^{-1}(1+y_1)}: y_2 : 1 : \cdots : 1
		\big] , y_1 \Big) \\
		&=
		\left\{
		c+(k_0-k)\left(\val(1+y_1)-\min\{0,\val(y_1)\}\right) \, , \, \psi_0(s), \dots, \psi_n(s) 
		\right\}_{[k]}
	\end{align*} 
Analogous to the proof of Lemma \ref{F_global_smooth_locus_lem}, we can ultimately confirm the desired relation:
\[
F\circ g(\mathbf y) = \left(\psi_0(s),\dots, \psi_{k_0-1}(s), c , \psi_{k_0}(s),\dots, \psi_n(s), \ s \right)=\pmb\gamma(s,c)=j\circ \pi_0^\vee(\mathbf y)
\]
If $s\neq 0$, then $\val(1+y_1)=\min\{0,\val(y_1)\}=\min\{0,s\}$ and thus the above computation directly implies that $F_k\circ g(\mathbf y)=\gamma_k(s,c)$.
To sum up, we also conclude that $F\circ g(\mathbf y)=\pmb\gamma(s,c)=j\circ \pi_0^\vee(\mathbf y)$.
\end{proof}

\begin{proof}[Proof of Theorem \ref{Main_thm_oversimplified}]
Recall that $g$ is an analytic embedding map.
By Theorem \ref{diagram_thm}, the image $g(X_0^\vee)$ agrees with $f^{-1}(B_0)$.
In other words, $g$ intertwines the affinoid torus fibration $\pi_0^\vee$ and $f_0$. Therefore, the integral affine structure induced by $f_0$ is exactly the one induced by $\pi_0^\vee$, while the latter is precisely the one induced by $\pi_0$ due to the identifications (\ref{identification_mirror_eq}) (\ref{identification_dual_affinoid_torus_fibration__eq}). This verifies the conditions (ii) and (iii).
Finally, by Lemma \ref{F_global_smooth_locus_lem}, it remains to verify that $j(0,|a_k|)=A_k(0)$, which is straightforward from their definitions (\ref{j_total_topo_eq}) and (\ref{corner_point_gamma}). The proof is now complete.
\end{proof}

We have proven Theorem \ref{Main_thm_oversimplified} for a generic $A_n$-smoothing under the condition that the norms $|a_k|$'s are pairwise distinct.
In this event, these $\mathcal P=\{a_0,\dots, a_n\}$ already constitute an open dense subset in the configuration space.
For an arbitrary $\mathcal P$ without this condition, minimal additional work is needed, as most of the main body can be repeated without changes.
Generally speaking, there exist a partition of the index set $\{0,1\dots, n\}$ into subsets $I_1,\dots, I_m$ ($1\leqslant m \leqslant n+1$) together with positive real numbers $\lambda_1<\cdots <\lambda_m$ such that $|a_k|=\lambda_j$ for $k\in I_j$. Then, the singular locus $\Delta$ of $\pi$ has $m$ singular points, with $m+1$ chambers to consider.

Our method applies to all cases, including the most complicated case with $m=n+1$ already addressed in the main text, with simpler cases remaining. We offer a sketch of the argument for the case with $|a_0|=\cdots=|a_n|=:\lambda$. 
It may be instructive to give some intuition. Note that the same formula $f=f_{\mathcal P}$ gives the desired solution for almost every $\mathcal P$ in the configuration space. For the exceptional choices of $\mathcal P$ in the above case, just imagine the various $\psi(\cdot, |a_j|)$ in the order statistics functions in $j$ and $F$ deforming to the same value $\psi(\cdot, \lambda)$ as shown in Figure \ref{figure_order_statistics_degenerate}. This serves as a guiding concept, not a complete analysis, since we do not aim to explore the deformation in the non-archimedean context. 
In any case, by simply repeating our arguments from the main text, we can always rigorously obtain the explicit mirror fibration $f=f_{\mathcal P}$ satisfying the desired properties.

The Lagrangian fibration $\pi(u, v, z) = (\frac{1}{2}(|u|^2 - |v|^2), |z|)$, as previously stated in (\ref{pi_Lagrantian_fibration_eq}), has a base $B = \mathbb R \times \mathbb R_{>0}$ containing only a singular point at $\mathfrak q:=(0, \lambda)$. We also set $B_0 = B \setminus \{\mathfrak q \}$.
The singular Lagrangian fiber $L_{\mathfrak q}$ consists of a chain of $n+1$ spheres, with singular points at $(u, v, z) = (0, 0, a_k)$ for each $0\leqslant k\leqslant n$.
Applying Theorem \ref{Main_theorem_thesis_thm} to the pair $(X, \pi_0)$ generates a pair $(X_0^\vee, \pi_0^\vee)$, and identifications analogous to (\ref{identification_mirror_eq}) and (\ref{identification_dual_affinoid_torus_fibration__eq}) exist as well.
As in Proposition \ref{wall_place_prop}, the Lagrangian torus fiber $L_q$ over a smooth point $q = (s, r)$ in $B_0$ bounds a nontrivial Maslov-0 holomorphic disk in $X$ if and only if $r = \lambda$. Thus, the walls are $H_- = \{\lambda\} \times (-\infty, 0)$ and $H_+ = \{\lambda\}\times (0, +\infty)$.
Let $\mathscr N_-$ and $\mathscr N_+$ denote small neighborhoods of $H_-$ and $H_+$ in $B_0$, respectively. We define $U_0 = \mathbb R\times (0, \lambda) \cup \mathscr N_+ \cup \mathscr N_-$ and $U_1 = \mathbb R\times (\lambda , +\infty) \cup \mathscr N_+ \cup \mathscr N_-$.
Similarly, we can identify $(\pi_0^\vee)^{-1}(U_i) \cong \trop^{-1}(V_i) =:T_i$ for $i = 0, 1$, and for some integral affine chart $\chi_i: U_i\to V_i$. Further, we can identify 
\[
X_0^\vee \equiv T_0\cup T_1/\sim
\]
with the gluing relation as follows: we require $y\sim y'$ if $y = (y_1, y_2) \in T_0$ and $y' = (y_1', y_2') \in T_1$ satisfy $(y_1', y_2') = (y_1, y_2(1 + y_1)^{n+1})$. Moreover, the dual affinoid torus fibration $\pi_0^\vee$ is characterized as follows: given $y \in X_0^\vee$, we assume $(s, r) = \pi_0^\vee(y) \in B_0$. When $y = (y_1, y_2) \in T_0$, we have $\val(y_1) = s$ and $\val(y_2) = \psi(s, r)$. When $y = (y_1, y_2) \in T_1$, we have $\val(y_1) = s$ and $\val(y_2) = \psi(s, r) - (n+1)\min\{0, s\}$.

With the above identifications, we examine an analytic embedding $g': T_0\to Y_{\sigma_0}^* \cong \Lambda^2_{z_0, w_0}$, defined by $(y_1, y_2) \mapsto (y_2^{-1}(1 + y_1), y_2)$, and an analytic embedding $g'': T_1\to Y_{\sigma_n}^* \cong \Lambda^2_{z_n, w_n}$, defined by $(y_1, y_2) \mapsto (y_2^{-1}, y_2(1 + y_1))$. By the toric gluing relation, we have $z_n = z_0 (1 + y)^n$ and $w_0 = w_n (1 + y)^n$. Thus, the maps $g'$ and $g''$ are compatible with the identification in $X_0^\vee = T_0\cup T_1/\sim$. Consequently, we deduce that $g'$ and $g''$ combine to form an analytic embedding $g: X_0^\vee \to Y_\Sigma^*$.

By (\ref{psi_k_abbrev_notation}), we similarly write $\psi=\psi(s)=\psi(s,\lambda)$. Inspired by (\ref{gamma_k}), we define
\[
\gamma_k(s, c) =\{ c,   \psi, \psi, \dots \psi  \}_{[k]} 
=
\begin{cases}
	\min\{c,\psi\} & \text{if  } k=0 \\
	\psi &\text{if  } 1\leqslant k\leqslant n \\
	\max\{c,\psi\} & \text{if  } k=n+1 
\end{cases}
\]
where there are \text{$n+1$ copies of the same $\psi$} in $\{\cdots \}_{[k]}$ such that the order statistic degenerates (see Figure \ref{figure_order_statistics_degenerate}).
Therefore, by (\ref{pmb_gamma_eq}), we may omit the redundant middle $n$ components and directly designate
\[
\pmb\gamma: \mathbb R^2 \to\mathbb R^3, \qquad \pmb\gamma(s,c)= (\gamma_0(s,c), \gamma_{n+1}(s,c), s) = (\min\{c,\psi\}, \max\{c,\psi\}, s)  \  .
\]
It has two line segments with a corner point $A=(\psi, \psi, s)$.
Accordingly, we follow (\ref{j_total_topo_eq}) to define $j_k(s,r)=\gamma_k(s,\psi(s,r))$ and $j=(j_0,j_{n+1},s)$.

In the mean time, the order statistic degeneration happens for the tropically continuous fibration as well. Inspired by (\ref{F_k_defn_eq}), we consider
\[
F_0 =
	\min\left\{ \sum_{j=0}^{n+1} j \cdot \val(x_j)  \quad  , \quad \psi(\val(y), \lambda) \right\} 
\]
and
\[
F_{n+1}= \max\left\{ \sum_{j=0}^{n+1} (j-n-1)\cdot \val(x_j) + (n+1) \min\{0,\val(y)\}   \quad , \quad  \psi(\val(y), \lambda )
\right\}
\]
but define $F_k=\psi(\val(y), \lambda)$ for all other $1\leqslant k\leqslant n$.
Here we use the homogeneous coordinates (\ref{homogeneous_coordinates_identification}).
By (\ref{F_defn_eq}), we may also omit the middle $n$ degenerating components again and define
\[
F=(F_0,F_{n+1}, \val(y)) :Y_\Sigma^* \to\mathbb R^3  \  .
\]
Similar to Lemma \ref{Y_mathscr_Y_lem}, we can verify that for the analytic open domain $\mathscr Y=\{ |\prod_{k=0}^{n+1} x_k^k| <1\}$, we exactly have $j(B)= F(\mathscr Y)$.
Therefore, just like (\ref{f_j_F_mainbody_eq}), we can define
\[
f=j^{-1}\circ F : \mathscr Y\to B  \  .
\]
Of course, one may insist on a common form of the formula, setting $\tilde j=(j_0,j_1,\dots, j_n, j_{n+1}, s)$ and $\tilde F=(F_0,F_1,\dots, F_n,F_{n+1},\val(y))$. But then, there is no essential difference as one can easily verify that $\tilde j^{-1}\circ \tilde F=j^{-1}\circ F$.
Finally, it remains to verify a commutative diagram as in Theorem \ref{diagram_thm}.
The argument is almost identical and even easier.
Fix a point $\mathbf y$ in $X_0^\vee$, and write $(s,r)=\pi_0^\vee(\mathbf y)\in B_0$.
Setting $c=\psi(s,r)$, it remains to verify that $\pmb\gamma(s, c)=F\circ g(\mathbf y)$.

When $(s,r)\in U_0$, we identify $\mathbf y$ with a point $(y_1,y_2)$ in $T_0$. Hence, $\val(y_1)=s$ and $\val(y_2)=\psi(s,r)$.
It follows that
\begin{align*}
F\circ g(\mathbf y)
&=
\big(F_0 \circ g'(\mathbf y), F_{n+1}\circ g'(\mathbf y), s \big)  \\
&=
\big(\min\{\val(y_2) , \psi(s,a)\} \  ,  \quad \max\{ \val(y_2)-(n+1)\val(1+y_1) +(n+1)\min\{0,s\} , \psi(s,\lambda)\} \  ,  \quad  s \big) \\
&=
\big(\min\{ c , \psi(s,\lambda)\} \  ,  \quad \max\{ c-(n+1)\tau , \psi(s,\lambda)\} \  ,  \quad s \big)
\end{align*}
where we put $\tau=\val(1+y_1)-\min\{0,s\}\geqslant 0$. If $s\neq 0$, then $\tau=0$. If $s=0$, then $r<\lambda$ and $c= \psi(0,r)<\psi(0,\lambda)$. Thus, $\max\{ c-(n+1)\tau , \psi(s,\lambda)\}=\max\{c,\psi(s,\lambda)\}$. In either cases, we can similarly conclude $F\circ g(\mathbf y)=\pmb\gamma(s,c)$ as desired.

When $(s,r)\in U_1$, $\mathbf y$ is identified with a point $(y_1,y_2)\in T_1$.
Hence, $\val(y_1)=s$ and $\val(y_2)=\psi(s,r)-(n+1)\min\{0,s\}$.
A similar computation yields
\begin{align*}
	F\circ g(\mathbf y)
	&=
	\big(F_0 \circ g''(\mathbf y), F_{n+1}\circ g''(\mathbf y), s \big)  
	=
	\big(\min\{c+(n+1)\tau , \psi(s,a)\} \  ,  \quad \max\{ c , \psi(s,a)\} \  ,  \quad  s \big)
\end{align*}
If $s\neq 0$, then $\tau=0$ as before. If $s=0$, then $r>\lambda$ and $c=\psi(0,r)>\psi(0,\lambda)$.
Thus, $\min\{c+(n+1)\tau, \psi(0,\lambda)=\min\{c,\psi(0,\lambda)\}$. We can conclude $F\circ g(\mathbf y)=\pmb\gamma(s,c)$ as well.

As extra evidence, we aim to consider the variants of Corollary \ref{F_image_toric_divisor_cor} and \ref{F_image_torus_open_dense_torus_cor}.
Recall that for any $1\leqslant k\leqslant n$, the irreducible toric divisor $\mathcal D_k$ is compact and is given by $x_k=0$ in the homogeneous coordinate. Then, $1+y=\prod_{j=0}^{n+1}x_j=0$ on such $\mathcal D_k$, which implies $\val(y)=0$. Moreover, $F_0|_{\mathcal D_k}=\min\{+\infty, \psi(0,\lambda)\}=\psi(0,\lambda)$ and $F_{n+1}|_{\mathcal D_k}=\max \{-\infty, \psi(0,a)\} = \psi(0,\lambda)$.
Hence, we see that $F(\mathcal D_k)= (\psi(0,\lambda),\psi(0,\lambda),0)$ which exactly agrees with $j(\mathfrak q)=j(0,q)$, the image of the unique singular point.

\appendix
\section{Miscellaneous related topics}
\label{s_miscellaneous_topics}



Recall that the reduced space $\bar X_{red,s}$ of $\bar X$ is diffeomorphic to $\mathbb C$ (i.e., the base in Figure \ref{figure_psi(s,r)}) through the projection $p(u,v,z) = z$. It is crucial to differentiate between $p$ and the Lagrangian fibration $\pi$, as well as distinguishing the set $\mathcal P=\{a_0,\dots, a_n\}$ of critical values of $p$ in $\mathbb C$ from the singular locus $\Delta$ in the base $B = \mathbb R \times \mathbb R_{>0}$ of $\pi$.
For clarity, let's first assume $|a_0|<\cdots <|a_n|$.

When we refer to \textit{a curve in $(\mathbb C, \mathcal P)$}, we mean a subset $c \subset \mathbb C$, which can either be a non-contractible simple closed curve in $\mathbb C \setminus \mathcal P$, or the image of an embedding $\mathfrak c: [0,1] \to \mathbb C$ where $\mathfrak c^{-1}(\mathcal P) = \{0, 1\}$, as described in \cite[3a]{Seidel_Khovanov2002quivers}. Given any such curve $c$ in $(\mathbb C, \mathcal P)$, we can associate a Lagrangian two-sphere $\mathcal L_c$, which is explicitly defined as 
$
\mathcal L_c =\{ (u,v,z)\in\mathbb C^3 \mid uv=h(z),  \   |u|=|v|  ,   \   z\in c\}
$.


Due to Khovanov and Seidel \cite{Seidel_Khovanov2002quivers}, the Lagrangian isotopy class of $\mathcal L_c$ recovers the isotopy class of $c$ in $(\mathbb C, \mathcal P)$. Considering an $n$-chain of smooth curves $c_1, \dots, c_n$ in $(\mathbb C, \mathcal P)$ with $c_k$'s endpoints at $a_{k-1}$ and $a_k$, we can further assume each $c_k$ is an admissible curve in normal form in the sense of \cite[3e]{Seidel_Khovanov2002quivers}, which is always achievable via an isotopy.
Let $\mathcal L_k=\mathcal L_{c_k}$, and the set $\mathcal L_{1}, \dots, \mathcal L_{n}$ forms an \textit{$(A_n)$-configuration of Lagrangian two-spheres} \cite[\S 8]{seidel1999lagrangian}, characterized as follows:
\[
|\mathcal L_{i}\cap \mathcal L_{j} | 
=
\begin{cases}
	1, & \text{if }  |i-j|=1 \\
	0, & \text{if } |i-j|\geqslant 2
\end{cases}
\]
It occurs as vanishing cycles in the smoothing of $A_n$-singularity (cf. \cite[3.5]{Seidel_Thomas_2001braid}, \cite[20a]{SeidelBook}).
For $1 \leqslant k \leqslant n$, we note that the image $\pi(\mathcal L_{k})$ gives a compact segment $\Gamma_k$ in $B = \mathbb R \times \mathbb R_{>0}$, which connects the two focus-focus singular points $(0, |a_{k-1}|)$ and $(0, |a_k|)$.
Note that $\pi$ maps any point in $\mathcal L_c$ to a point in $B$ with $s=0$, given $|u|=|v|$ on $\mathcal L_c$.
Hence, we find that:
\[
\Gamma_k=\pi(\mathcal L_k)=\{(s,r)  \mid s=0,  |a_{k-1}| \leqslant r \leqslant |a_k| \} = \{0\}\times [ |a_{k-1}|, \ |a_k| ]  \quad \subseteq B
\]

Denote the minimal resolution of the $A_n$-singularity by $\kappa: Y_\Sigma \to \Lambda^2 / \mathbb Z_{n+1}$. Recall that the fan $\Sigma$ has $n+2$ rays $v_k=(k,1)$ for $0 \leqslant k \leqslant n+1$. Let $\mathcal D_k$ be the toric divisor for $v_k$.
Then, $\mathcal D_k$ is described as $x_k=0$ in homogeneous coordinates.
The self-intersection number of $\mathcal D_k$ equals $-2$ for $1 \leqslant k \leqslant n$, excluding $\mathcal D_0$ and $\mathcal D_{n+1}$ yet. The exceptional locus is $\kappa^{-1}(0)=\mathcal D_1\cup \mathcal D_2\cup \cdots \cup \mathcal D_n$.
It is a chain of $n$ irreducible rational $(-2)$-curves such that $\mathcal D_i\cdot \mathcal D_{i+1}=1$ and $\mathcal D_i\cdot \mathcal D_j =0 $ if $|i-j|\geqslant 2$.
Notice that these relations resemble the above ones with respect to Lagrangian spheres $\mathcal L_{i}$'s.


\begin{figure}
	\centering
	\captionsetup{font=footnotesize}
	\includegraphics[width=7.5cm]{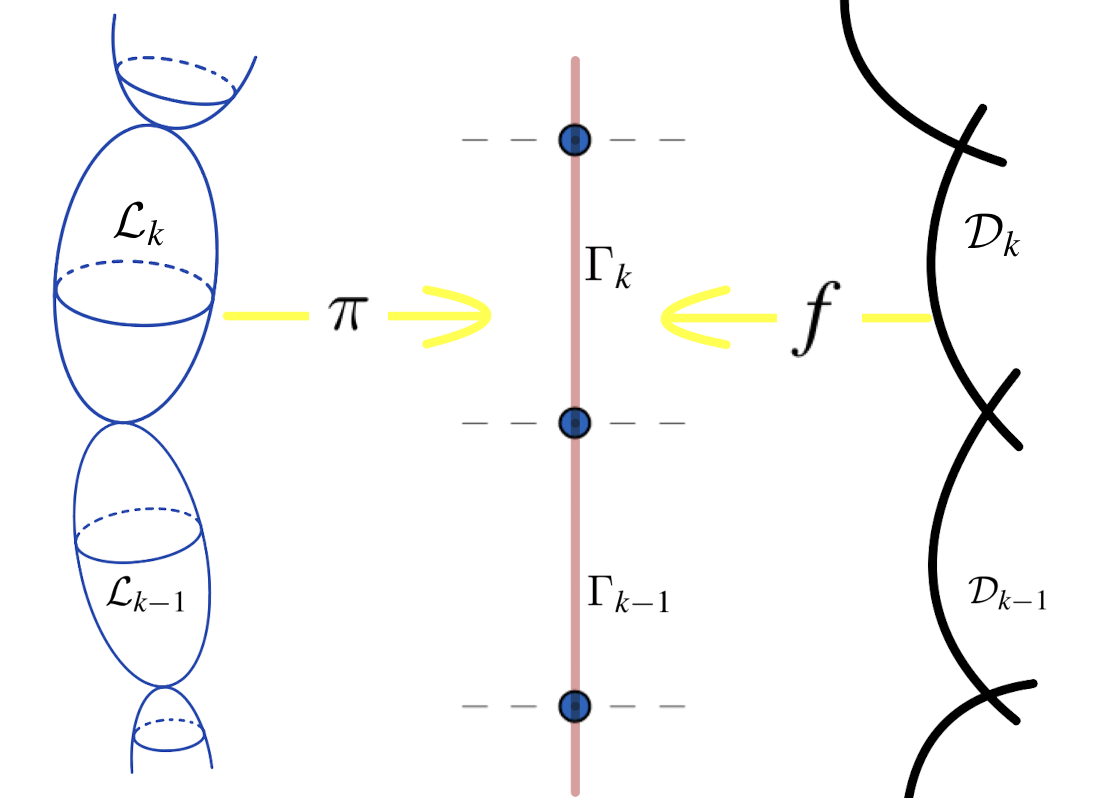}
	\caption{\footnotesize The illustration for Observation \ref{observation_A} linking Lagrangian spheres $\mathcal L_i$'s and exceptional rational $(-2)$-curves $\mathcal D_i$'s}
	\label{figure_observation}
\end{figure}

\begin{observation}
	\label{observation_A}
	For any $1\leqslant i \leqslant n$, the following coincidence holds (see Figure \ref{figure_observation}): 
	\begin{equation}
		\label{f(D_k)_intro}
		\pi(\mathcal L_{i})  =\Gamma_i= 	f(\mathcal D_i) 
	\end{equation}
\end{observation}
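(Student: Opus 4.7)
Since the text already establishes $\pi(\mathcal L_i) = \Gamma_i = \{0\} \times [|a_{i-1}|, |a_i|]$, the remaining task is to verify $f(\mathcal D_i) = \Gamma_i$ via the explicit formula (\ref{f_eq_intro}). The plan is to directly evaluate the tropical map $F$ on the exceptional curve $\mathcal D_i$ and compare the result with $j$ evaluated on $\Gamma_i$.

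First, I would examine the valuations at Berkovich points of $\mathcal D_i$. The defining equation $x_i = 0$ forces $\val(x_i) = +\infty$, while the toric data of $Y_\Sigma$ together with the mirror-theoretic prescription for $y$ give $\val(y) \equiv 0$ on the compact exceptional $(-2)$-curve $\mathcal D_i$ for $1 \leqslant i \leqslant n$. The latter already pins down the $s = 0$ coordinate on $\Gamma_i$. This is the step that must be justified directly from the toric construction rather than from formula (\ref{f_eq_intro}).

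Next, plug these into the order-statistic formula. Writing $A_k := \sum_j (j-k)\val(x_j) + k\min\{0,\val(y)\}$ and $\psi_j := \psi(0,|a_j|)$ (so $\psi_0 < \cdots < \psi_n$ under the assumption $|a_0| < \cdots < |a_n|$), the decisive observation is that the isolated term $(i-k)\val(x_i)$ equals $+\infty$ when $k < i$, equals $-\infty$ when $k > i$, and drops out when $k = i$. The order-statistic ranks then yield $F_k = \psi_k$ for $k < i$ and $F_k = \psi_{k-1}$ for $k > i$, precisely matching the corresponding components of $j_k(0,r)$ for any $r \in [|a_{i-1}|,|a_i|]$. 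For the middle index $k = i$, a direct enumeration shows $F_i = \operatorname{median}\{A_i, \psi_{i-1}, \psi_i\}$, which automatically lies in $[\psi_{i-1},\psi_i]$ and hence, after applying $j^{-1}$, maps to an $r$-value in $[|a_{i-1}|,|a_i|]$. This already gives the inclusion $f(\mathcal D_i) \subseteq \Gamma_i$.

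Finally, to upgrade this inclusion to equality, I would parametrize $\mathcal D_i \cong \mathbb P^1$ with torus-fixed endpoints $\mathcal D_i \cap \mathcal D_{i-1}$ and $\mathcal D_i \cap \mathcal D_{i+1}$. At these endpoints the valuations $\val(x_{i-1})$ and $\val(x_{i+1})$ respectively diverge to $+\infty$, and the sign of the coefficient, namely $(i-1)-i = -1$ versus $(i+1)-i = +1$, forces $A_i \to -\infty$ at the former and $A_i \to +\infty$ at the latter. Continuity of $A_i$ along the connected $\mathbb P^1$ then guarantees that $A_i$ attains every intermediate real value; in particular it sweeps the full interval $[\psi_{i-1},\psi_i]$, so the induced $r$-coordinate covers the entire segment $[|a_{i-1}|,|a_i|]$. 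The main technical obstacle, beyond the toric identification $\val(y) \equiv 0$ on $\mathcal D_i$, is isolating the dominant contribution $(i\pm 1 - i)\val(x_{i\pm 1})$ at the two torus-fixed endpoints of $\mathcal D_i$, where several $\val(x_j)$'s become infinite at once; this requires passing to the local affine toric charts adjacent to $\mathcal D_i$ to ensure the remaining terms stay bounded. The rest of the argument is purely combinatorial order-statistic bookkeeping.
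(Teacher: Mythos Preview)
Your proposal is correct and follows essentially the same route as the paper's sketch: both fix $i$, set $c_k=\sum_j (j-k)\val(x_j)$, observe that the single term $(i-k)\val(x_i)=\pm\infty$ forces the order statistic $F_k$ to collapse to $\psi_k$ or $\psi_{k-1}$ for $k\neq i$, and then argue that the free component $F_i$ sweeps the interval $[\psi_{i-1},\psi_i]$ as one moves along $\mathcal D_i\cong\mathbb P^1$. Your justification of $\val(y)\equiv 0$ on $\mathcal D_i$ (via $1+y=\prod_j x_j=0$, hence $y=-1$) and your endpoint analysis at the torus-fixed points $\mathcal D_i\cap\mathcal D_{i\pm 1}$ match exactly what the paper carries out in the subcase analysis of Lemma~\ref{F_global_image_lem} (Case~3) and records as Corollary~\ref{F_image_toric_divisor_cor}.
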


\begin{proof}[Sketch]
	Fix $i$, and set $c_k:=\sum_{j=0}^{n+1} (j-k) \val(x_j)$ for all $k$. Notice $\mathcal D_{i}$ is given by $x_{i}=0$, i.e. $\val(x_{i})=+\infty$.
	Then, $c_k$ is $+\infty$ whenever $k<i$, is $-\infty$ whenever $k>i$. Only when $k=i$, the $c_k$ can be seen as a free variable. By the order statistics for $F$ in (\ref{f_eq_intro}), the value of $F_k(\mathcal D_i)$ is fixed whenever $k\neq i$, while $F_i(\mathcal D_i)$ ranges in a line segment between $\psi(0,|a_{i-1}|)$ and $\psi(0,|a_i|)$, which finally matches the line segment $\Gamma_i$ via the embedding map $j$. (See Corollary \ref{F_image_toric_divisor_cor} or Lemma \ref{F_global_image_lem} for the full details.)
\end{proof}


The above verification is merely at the set-theoretic level. Thus, it seems that the knowledge required to validate Observation \ref{observation_A} is very standard. At the same time, the theoretical bedrock that reveals the explicit equation for $f$ in (\ref{f_eq_intro}) extends deep into several distinct fields of mathematics.
The relation in (\ref{f(D_k)_intro}) is interesting, given that $\pi$ and $f$ already have restrictive dualistic conditions connecting each other (Definition \ref{SYZ_mirror_defn}). 
Notably, the dual fibration $f=f_{\mathcal P}$ \textit{only} studies quantum correction holomorphic disks within the SYZ picture, with \textit{no} consideration of the Lagrangian spheres $\mathcal L_i$ nor the exceptional rational curves $\mathcal D_i$ whatsoever.
Had we not been serious in our efforts to make $f$ explicit as shown in (\ref{f_eq_intro}), we would not have made this striking observation.

Besides, for any other choice of $\mathcal P=\{a_0,\dots, a_n\}\subset \mathbb C$ in the configuration space $\mathscr C=\mathrm{Conf}_{n+1}(\mathbb C)$, we can similarly check that the \textit{same} formula in (\ref{f_eq_intro}) for $f=f_{\mathcal P}$ admits an analog of Observation \ref{observation_A}, with the expected reordering.
Then, at least in a preliminary sense and at the object level, these observations align quite well with the philosophy in the renowned works of Khovanov, Seidel, and Thomas \cite{Seidel_Khovanov2002quivers,Seidel_Thomas_2001braid}.

Now, a natural question arises:
\textit{Does the observation merely refer to a coincidence?}

Driven by an aesthetic taste, we contend otherwise: we are merely perceiving the "tip of the iceberg" with a vast bulk lying beneath, awaiting deeper exploration.
A starting point could be the development of affinoid coefficients in Lagrangian Floer cohomology \cite{Yuan_affinoid_coeff,Yuan_c_1} or the generalization of the family Floer functor approach to include quantum corrections.
In principle, the explicit nature of our formula (\ref{f_eq_intro}) allows us to visually represent any phenomenon that arises when moving $\mathcal P$ along a loop in $\mathscr C$, while capturing any additional structure in this process is a separate challenge that should be addressed somewhere else.
Note that the fundamental group $\pi_1(\mathscr C)$ is identified with the braid group $B_{n+1}$.

\subsection{Braid group action, affinoid coefficients, and family Floer functor}
\label{ss_miscell_Dehn_affinoid_coeff}

The subsequent discussion is of a heuristic nature and is not necessary for our main result. However, it highlights some potential avenues for future research. As such, we will not pursue strict rigor in the remainder of this section.
As indicated by Seidel in \cite{seidel1999lagrangian}, an $(A_n)$-configuration of Lagrangian spheres in a symplectic manifold gives rise to a homomorphism $\varrho$ from the braid group $B_{n+1}$ to the group of symplectic isotopy classes of automorphisms.
Following Khovanov and Seidel \cite{Seidel_Khovanov2002quivers}, we can describe this as follows.
Recall that the $A_n$-smoothing refers to the affine space defined by $uv=h(z)$, where 
\[
h(z)=
(z-a_0)(z-a_1)\cdots (z-a_n)
=: z^{n+1}+w_nz^n+\cdots +w_1z+w_0 \ .
\]
Assuming there are no multiple roots, the parameters $w=(w_0,\dots, w_n)$ form an open subset $\mathcal W\subset \mathbb C^{n+1}$ that is homotopy equivalent to the (unordered) configuration space $\mathscr C=\mathrm{Conf}_{n+1}(\mathbb C)$. This gives rise to a $\mathcal W$-family of Milnor fibers.
Since $\pi_1(\mathcal W)\cong \pi_1(\mathscr C)$ is naturally identified with the braid group $B_{n+1}$, the parallel transport for appropriate choices of connections in this family defines the aforementioned braid group action $\varrho$; see \cite[(1.4)]{Seidel_Khovanov2002quivers}.

There is a more direct definition of $\varrho$ in terms of (generalized) Dehn twists \cite[\S 6]{seidel1999lagrangian}.
A Lagrangian sphere $\mathcal L=\mathcal L_c$ leads to a symplectic automorphism $\tau_{\mathcal L}$ known as a Dehn twist along $\mathcal L$. It is supported within a small Weinstein neighborhood $\mathcal U$ of $\mathcal L$. However, even if it is small, this $\mathcal U$ inevitably intersects a nearby smooth Lagrangian torus fiber bounding a nontrivial Maslov-0 holomorphic disk $u$ (see Figures \ref{figure_psi(s,r)} and \ref{figure_area_singular}). The symplectic area of this $u$ can be arbitrary small, but we must consider the counts in the class $k[u]$ for any arbitrary large integer $k \gg 0$ (see Figure \ref{figure_hp_maslov_0}).
In other words, the mirror Berkovich analytic topology for $f$ necessitates simultaneous considerations of holomorphic disks with small symplectic areas and large multiples of them.
Even though a Lagrangian sphere itself may not enclose holomorphic disks, its intersection with other graded Lagrangian submanifolds that do bound such disks should also be taken into account for the global attributes.
In fact, any graded Lagrangian submanifold should define an object in the derived Fukaya category \cite[1.2]{Seidel_Thomas_2001braid}.

Seidel and Thomas \cite[1.3]{Seidel_Thomas_2001braid} anticipate that "twist functors and generalized Dehn twists correspond to each other under mirror symmetry".
By the evidence (\ref{f(D_k)_intro}), we are likely approaching a cogent geometric interpretation for their anticipation.
In general, if the mirror object of $\mathcal L$ is a line bundle $\mathscr E$, then according to their argument in \cite[(1.6)]{Seidel_Thomas_2001braid}, we should expect:
\begin{equation}
	\label{Hom_HF_intro_eq}
	\mathrm{Hom}_{\mathscr O_Y} (\mathscr E,\mathscr E)
	\cong \mathrm{HF}^*(\mathcal L,\mathcal L)
\end{equation}

\textit{However, the structure sheaf $\mathscr O_Y$ of the mirror Berkovich analytic space $Y$ is modeled locally on affinoid algebras.} Thus, a main trouble comes from the implication that the above relation (\ref{Hom_HF_intro_eq}) requires a version of Lagrangian Floer cohomology with affinoid algebra coefficients, or "\textit{affinoid coefficients}" for short. This requires further foundational work \cite{Yuan_affinoid_coeff}.
On the other hand, due to the Berkovich topology being a refinement of the Zariski topology, the concepts of derived categories of coherent sheaves, Fourier-Mukai transforms, and twist functors continue to apply in the non-archimedean setting. Thus, incorporating affinoid coefficients can possibly enrich and expand the aforementioned relationship and connect Seidel's long exact sequence \cite{Seidel2003_long_exact} with the natural exact sequence of twist functors associated with spherical objects. We anticipate that the family Floer functor will fulfill this task.

\begin{rmk}
	The affinoid coefficients, initially introduced in \cite{Yuan_c_1}, elucidate and generalize a folklore conjecture known to Auroux, Kontsevich, and Seidel. With verified new examples in \cite{Yuan_local_SYZ}, it links $c_1$-eigenvalues of quantum cohomology on the compactified spaces on the K\"ahler side to critical values of the resulting superpotential on the Berkovich side.
	A basic motivation for affinoid coefficients arises from the following idea. Selecting a single bounding cochain, while formally resolving the curvature term in the most general case, unavoidably results in losing information. However, in a slightly less general situation such as dealing with graded Lagrangian submanifolds \cite{SeidelGraded}, merging \textit{all} bounding cochains together leads to specific new Berkovich analytic structure, as demonstrated in \cite{Yuan_I_FamilyFloer}.
\end{rmk}

Eventually, a conceptual and functorial justification of Observation \ref{observation_A} is expected in relation to its upgrade to the \textit{family Floer functor}.
This approach is investigated by Abouzaid and Fukaya \cite{AboFamilyFaithful,FuFamily,FuPPT}.
Specifically, the Lagrangian sphere $\mathcal L_i$ should be assigned to a coherent sheaf representing the family of Floer cohomologies
\begin{equation}
	\label{assign_functor_intro_eq}
	\mathscr E_{\mathcal L_i}: \quad 
	q \quad  \text{"}\mapsto\text{"}  \quad HF(\mathcal L_i, L_q)
\end{equation}
as $q$ moves in the SYZ base, where we set $L_q=\pi^{-1}(q)$.

At the set-theoretic level, this tentative family Floer functor approach forms a perfect match with Observation \ref{observation_A}. Specifically, since $\pi(\mathcal L_i) = \Gamma_i$, we know that $\mathcal L_i \cap L_q$ is empty if and only if $q$ does not intersect $\Gamma_i$. Hence, we can reasonably assume that $\mathscr E_{\mathcal L_i}$ is supported by a subset of $f^{-1}(\Gamma_i)$. As $f(\mathcal D_i)=\Gamma_i$, it suggests that $\mathscr E_{\mathcal L_i}$ corresponds to a line bundle or coherent sheaf related to $\mathcal D_i$.
Moreover, the existing homological mirror symmetry results like \cite{ishii2005autoequivalences,ishii_Stability_condition_An_ishii2010stability,Chan_An_Tduality,Chan_Ueda_2013dual,Pomerleano2011curved_I} have predicted that $\mathcal L_i$ mirrors to $\mathscr O_{\mathcal D_i}(-1)$, which indeed agrees with the above speculation.

At a deeper level, incorporating non-archimedean analyticity is necessary. We must describe how the assignment in (\ref{assign_functor_intro_eq}) changes analytically as $q$ moves. This provides additional motivation to introduce affinoid coefficients.


\subsection{Singular fiber collision, order statistic degeneration, and Gamma conjecture}

\begin{figure}
	\centering
	\captionsetup{font=footnotesize}
	\begin{subfigure}{0.4\textwidth}
		\includegraphics[width=4.6cm]{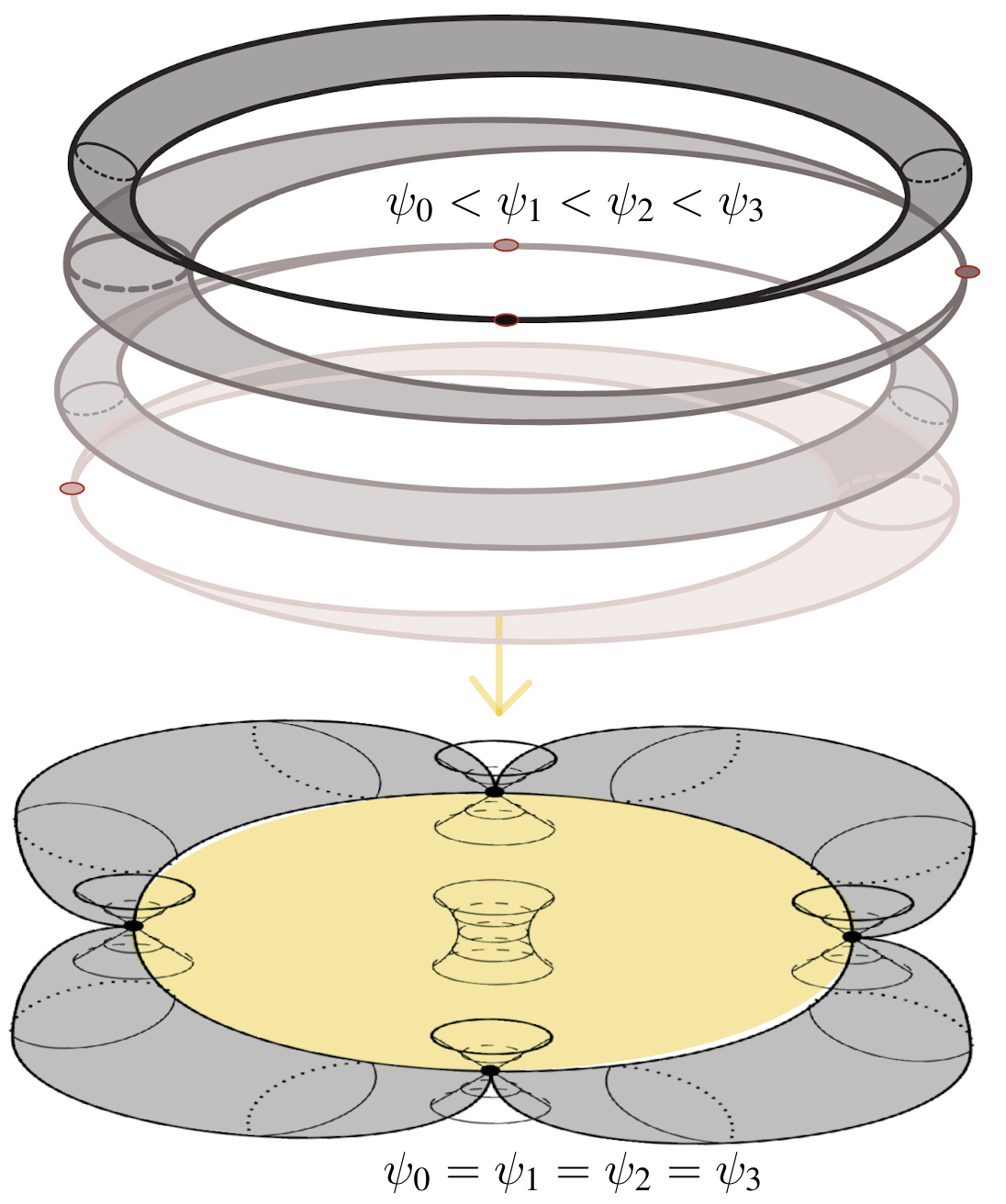}
		\caption{ \scriptsize 
			The bottom figure is indebted to \cite{evans_2021_book}.}
		\label{figure_collision_singular_fiber}
	\end{subfigure}
	\hfill
	\begin{subfigure}{0.44\textwidth}
		\includegraphics[width=6cm]{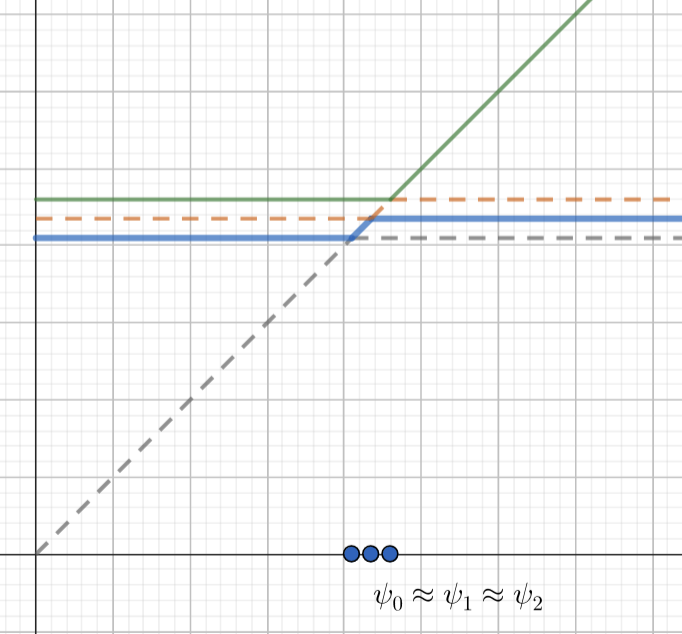}
		\caption{ \scriptsize 
			Order statistics degenerate to min, max, or constant}
		\label{figure_order_statistics_degenerate}
	\end{subfigure}
	\caption{\scriptsize 
		Here $\psi_j=\psi(s,|a_j|)$. Roughly, collapsing singular Lagrangian fibers of $\pi$ is `mirror' to degenerating order statistics in $f$.}
\end{figure}

Focusing back on the SYZ conjecture, our result suggests that "\textit{the braid group action corresponds to a dynamic process of singular locus as its components collide and scatter}".

The explicit formula (\ref{f_eq_intro}) is powerful as it always produces the desired dual fibration $f=f_{\mathcal P}$ with its integral affine structure and singular locus matching that of the Lagrangian fibration $\pi=\pi_{\mathcal P}$ despite the choice of $\mathcal P$. Let's first examine the dependence of singular locus $\Delta$ on the choice of $\mathcal P=\{a_0,\dots, a_n\}$.
The natural $S^1$-action has a fixed point set of $n+1$ points $(u,v,z)=(0,0,a_k)$. The singular locus $\Delta$ is given by the $\pi$-image of this set, and thus 
\[
\Delta=\Delta_{\mathcal P}=\{(0,|a_k|)\in B\mid 0\leqslant k\leqslant n\} \ .
\]
Clearly, the number of singular points may vary as some $|a_k|$ can coincide (cf. Figure \ref{figure_collision_singular_fiber} and \ref{figure_psi(s,r)}). If so, the order statistic functions in (\ref{f_eq_intro}) degenerate accordingly as shown in Figure \ref{figure_order_statistics_degenerate}.
For instance, the computation is simplest when all the norms $|a_k|$ are identical to some $\lambda$. (The calculations for other cases are nearly identical yet.)
Then, only a single singular point $(0,\lambda)$ appears in $B$.
Notice that we are utilizing the same formula as (\ref{f_eq_intro}).
All the $F_k$ for $k\neq 0, n+1$ degrade to $\psi(\val(y),\lambda)$. 
For $k=0,n+1$, the order statistics degenerate to the min / max functions.
Thus, the first component $F_0$ reduces to
\[
\textstyle 
F_0=\min\{\sum_{j=0}^{n+1} \ j\cdot \val(x_j), \ \psi(\val(y),\lambda) \}
\]
while the last one becomes 
\[
\textstyle
F_{n+1}=\max\{ \sum_{j=0}^{n+1}(j-n-1)\val(x_j) +(n+1)\min\{0,\val(y)\}, \ \psi(\val(y),\lambda) \} \ .
\]

We further explore the generators of the braid group $B_{n+1}$.
A preferred isomorphism $B_{n+1}\cong \pi_1(\mathscr C)$ relies on the choice of a basic set of curves \cite[3b]{Seidel_Khovanov2002quivers}. In our context, we may choose it to be the previous $c_1,\dots, c_n$ together with another path $c_{n+1}$ from $a_n$ to infinity.
Now, for $1\leqslant k\leqslant n$, the $k$-th generator $\tau_k$ of $B_{n+1}$ is given by the half-twist along $c_k$, which is the path in the configuration space $\mathscr C$ that rotates the two endpoints $a_{k-1}$ and $a_k$ of $c_k$ around their midpoint counterclockwise by 180 degrees, as depicted by Khovanov and Seidel in \cite[Figure 6(b)]{Seidel_Khovanov2002quivers}.
Let's denote the corresponding path in $\mathscr C$ by $\mathcal P_t=\{a_i(t)\}$, $0\leqslant t\leqslant 1$.
There exists a moment $0<t_0<1$ at which the norms of $a_{k-1}(t)$ and $a_k(t)$ coincide.
As a consequence, the dynamic process of the singular locus $\Delta_t=\Delta_{\mathcal P_t}$ experiences a collision of two focus-focus singular points as $t\to t_0-$ and a subsequent birth of two new singular points as $t\to t_0+$. This process corresponds to the Dehn twist $\tau_k$ along the Lagrangian sphere $\mathcal L_k$, and the consequent deformation of the dual fibration $f=f_{\mathcal P}$ is expected to induce the twist functor on the mirror side.

The comprehensive theory remains in its nascent stage, especially with regard to non-archimedean geometric interpretations when moving $\mathcal P$. Perhaps we may start with investigating the dynamic process for the integral affine manifold with singularities $(B, \Delta_{\mathcal P_t})$ when $\mathcal P_t$ moves along a loop in $\mathscr C$.



\bibliographystyle{abbrv}
\addtocontents{toc}{\protect\setcounter{tocdepth}{0}}
\bibliography{mybib_An}		
\addtocontents{toc}{\protect\setcounter{tocdepth}{2}}
	
\end{document}